\newcommand{\ds}[1]{\ {#1} \ }
\newcommand{\dss}[1]{\quad {#1} \quad }
\newcommand\thmref[2]{\pSkip\textbf{Theorem #1. }\emph{#2}\pSkip}
\newcommand\corref[2]{\pSkip\textbf{Corollary #1. }\emph{#2}\pSkip}
\def\RR{\mathscr R}
\def\corn{{\operatorname{corn}}}
\newtheorem{theorem}{Theorem}[section]
\newtheorem{definition}[theorem]{Definition}
\newtheorem{example}[theorem]{Example}
\newcommand{\Real}{\mathbb R}
\newcommand{\Net}{\mathbb N}
\newcommand{\one}{\mathbb{1}}
\newcommand{\zero}{\mathbb{0}}
\newcommand{\got}[1]{\frak{#1}}
\newcommand{\trop}[1]{\mathcal{#1}}
\newcommand{\tB}{\trop{B}}
\newcommand{\tC}{\trop{C}}
\newcommand{\tG}{\trop{G}}
\newcommand{\tI}{\trop{I}}
\newcommand{\tL}{\trop{L}}
\newcommand{\tS}{\trop{S}}
\newcommand{\tT}{\trop{T}}
\newcommand{\tR}{\trop{R}}
\newcommand{\tZ}{\trop{Z}}
\newcommand{\al}{\alpha}
\newcommand{\bt}{\beta}
\newcommand{\gm}{\gamma}
\newcommand{\lm}{\lambda}
\newcommand{\Lm}{\Lambda}
    \newenvironment{proof}{
    \smallskip
    \noindent\emph{Proof.}}{\hfill\(\Box\)
    \bigskip
    } \fi
\newcommand{\bfem}[1]{\textbf{\emph{#1}}}
\newcommand{\ifdef}[3]{\ifthenelse{\equal{#1}{true}}{#2}{#3}}
\numberwithin{equation}{subsection}
\numberwithin{equation}{section}
\def\tlv{\widetilde v }
\def\lclo{$\lmodnu$}
\def\mcR{\mathcal R}
\def\htR{\widehat R}
\def\mcI{\mathcal I}
\newcommand{\xl}[2]{\,\,{^{[#2]}}{#1}\,}
\def\lv{\operatorname{s}}
\def\2{\mathbb 2}
\def\lft{^{\rightarrow}}
\def\rgt{^{\leftarrow}}
\def\fgin{p} 
\def\fgot{q} 
\def\qq{u}
\def\pipe{{\underset{{\tG}}{\mid}}}
\def\lmod{\mathrel   \pipeGS \joinrel\joinrel \joinrel =}
\def\pipeWL{{\underset{L}{\mid}}}
\def\pipeL{{\underset{L}{\mid}}}
\def\lmodWL{\mathrel  \pipeWL   \joinrel \joinrel =}
\def\lmodWLnu{\mathrel  \pipeWL   \joinrel \joinrel \equiv_\nu}
\def\lmodLnu{\mathrel  \pipeL   \joinrel \joinrel \equiv_\nu}
\def\lmodnu{\mathrel  \mid   \joinrel   \equiv_\nu}
\def\rat{{\operatorname{rat}}}
\def\corn{{\operatorname{corn}}}
\def\tng{{\operatorname{tng}}}
\def\semiring0{semiring$^{\dagger}$}
\def\semirings0{semirings$^{\dagger}$}
\def\semifield0{semifield$^{\dagger}$}
\def\semifields0{semifields$^{\dagger}$}
\def\domain0{domain$^{\dagger}$}
\def\predomain0{pre-domain$^{\dagger}$}
\def\domains0{domains$^{\dagger}$}
\def\fone{{\one_F}}
\def\fzero{{\zero_F}}
\def\eBin{binomial}
\def\Z{\mathbb Z}
\def\Q{\mathbb Q}
\def\lmod{\mathrel  \pipe \joinrel \joinrel =}
\def\pipe{{\underset{{\tG}}{\mid}}}
\def\pipeL{{\underset{{L}}{\mid}}}
\def\pipe1{{\underset{{1}}{\mid}}}
\def\lmodL{\mathrel  \pipeL \joinrel \joinrel =}
\def\lmod1{\mathrel  \pipe1 \joinrel \joinrel =}
\def\corner{corner}
\def\tSS{S}
\def\pSkip{\vskip 1.5mm \noindent}
\def\gnu{>_{\nu}}
\def\nuge{\ge_{\nu}}
\def\mcomp{component}
\def\nucong{\cong_{\nu}}
\def\nule{\le_{\nu}}
\def\stak(f){\got{P}}
\def\stak{\got{S}}
\def\eroman{\etype{\roman}}
\def\bfa{\bold a}
\def\bfb{\bold b}
\def\bfc{\bold c}
\def\bfk{\bold k}
\newtheorem{thm}[theorem]{Theorem}
\newtheorem*{thm*}{Theorem}
\newtheorem*{dig*}{Digression}
\newtheorem{cor}[theorem]{Corollary}
\newtheorem{lem}[theorem]{Lemma}
\newtheorem{rem}[theorem]{Remark}
\def\pSkip{\vskip 1.5mm \noindent}
\newtheorem{prop*}{Proposition}
\newtheorem{prop}[theorem]{Proposition}
\newtheorem{defn}[theorem]{Definition}
\newtheorem*{examp*}{Example}
\newtheorem*{examples*}{Examples}
\newtheorem*{remark*}{Remark}
\newtheorem*{defn*}{Definition}
\def\ef{f^{\operatorname{es}}}
\def\bfi{ \textbf{i}}
\def\bfj{\textbf{j}}
\def\la{\lambda}
\def\pl{[} \def\pr{]}
\def\La{\Lambda}
\def\FunSR{\operatorname{Fun} (S,R)}
\def\FunSF{\operatorname{Fun} (S,F)}
\def\Fun{\operatorname{Fun}}
\def\Pol{\operatorname{Pol} }
\def\PolSF{\operatorname{Pol} (S,F)}
\def\tT{\mathcal T}
\def\tR{\mathcal R}
\numberwithin{equation}{section}
\def\M0{M_{\zero}}
\def\rzero{ \zero_R}
\def\aIdeal{exchange ideal}
\def\bIdeal{m-exchange ideal}
\def\supp{\operatorname{supp}}
\def\csupp{\operatorname{csupp}}
\def\tsupp{\operatorname{tsupp}}
\def\SR{R}
\def\PS{P}
\def\a{\alpha}
\def\rzero{\zero_\SR}
\def\rone{\one_\SR}
\newcommand{\etype}[1]{\renewcommand{\labelenumi}{(#1{enumi})}}
\newcommand{\nPS}[1]{\PS_{(!#1)}}
\newcommand{\nPSo}[1]{\nPS{\one}}
\newcommand{\cl}[1]{\bar{{#1}}}
\def\clF{\cl{F}}
\def\vmap{\vartheta}
\begin{document}


\title[Ideals of polynomial semirings in tropical c] {Ideals of polynomial semirings in tropical mathematics}

\author{Zur Izhakian}\thanks{This work has been supported by the
Israel Science Foundation grant No. 448/09.}
\address{Department of Mathematics, Bar-Ilan University, Ramat-Gan 52900,
Israel} \email{zzur@math.biu.ac.il}

\author{Louis Rowen}
\address{Department of Mathematics, Bar-Ilan University, Ramat-Gan 52900,
Israel} \email{rowen@macs.biu.ac.il}

\subjclass[2010]{Primary 15A09, 15A15, 16Y60; Secondary 15A33,
20M18, 51M20, 14T05.}

\date{\today}

\keywords{Tropical and supertropical algebra, layered algebra,
layered ideals, Noetherian ideal theory, Principal Ideal Theorem,
Hilbert Basis Theorem.}


\begin{abstract} We describe the ideals, especially the  prime ideals, of semirings of polynomials over layered domains,
and in particular over  supertropical domains.  Since there are so
many of them, special attention is paid to the ideals arising from
layered varieties, for which we prove that every prime ideal is a
consequence of finitely many binomials. We also obtain layered
tropical versions of the classical Principal Ideal Theorem and
Hilbert Basis Theorem.
\end{abstract}

\maketitle


\numberwithin{equation}{section}


\section{Introduction}\label{sec:Introduction}

One classical technique of algebraic geometry is to exploit the
1:1 correspondence between varieties and ideals of the polynomial
algebra, thereby enabling one to transfer algebraic properties of
the coordinate algebra to the geometric properties of the variety.
This is also one of our goals in studying supertropical algebras
and structures.

 Tropical varieties
(cf.~\cite{MS09}, for example) can be obtained by taking the
``tropicalization'' of the common roots of polynomial ideals over
the field of Puiseux series. To cope with various structural
shortcomings of the max-plus algebra, supertropical semirings were
introduced as \emph{extended tropical arithmetic} in
\cite{zur05TropicalAlgebra}, and studied in greater depth in
\cite{IzhakianRowen2007SuperTropical}, in which radical ideals are
utilized to obtain a version of Hilbert's Nullstellensatz. The
supertropical structure was further generalized in
 \cite{IzhakianKnebuschRowen2009Refined}, to the
\textbf{layered \domain0}, having multiple ghost layers and
possibly no zero element. Thus, the supertropical \domain0 is a
special case, which for emphasis we call the \textbf{standard}
supertropical case. Likewise, the max-plus algebra is an instance
of what we call the \textbf{standard} tropical case.  The passage
from polynomials over the field of Puiseux series to supertropical
polynomials is described as the \textbf{tropicalization functor}
in \cite{IzhakianKnebuschRowen2010Categories}.

Given an algebraic structure, one is led to consider its ideal
theory. The prospects for success for semirings is clouded by the
failure of ideals to determine homomorphisms, and in the context
of universal algebra it is preferable to consider congruences.
Nevertheless theorems such as the supertropical Nullstellensatz
\cite[Theorem~7.17]{IzhakianRowen2007SuperTropical} indicate that
ideals should play a significant role in the theory, and
furthermore the prime (monoid) ideal spectrum  is featured in the
treatment of algebraic geometry over monoids in~\cite{CHWW}. In
this paper, our overall aim is to lay the rudiments of the
foundation for the intrinsic ideal theory of polynomial
\semirings0 over
 layered 1-\semifields0 (especially over
supertropical semifields). Even in one indeterminate, the theory
is considerably more complicated than the classical situation.
From the outset,  different polynomials may define the same
function; one polynomial might be reducible even while the other
is irreducible. This leads us to formulate polynomials rather as
elementary formulas in the appropriate language in model theory,
which also enables us to handle such important variants as Laurent
polynomials and rational polynomials, as explained in
\S\ref{model}. But  we still write polynomials in the familiar
notation, as sums of monomials.

 In view of the
Nullstellensatz(\cite[Theorem~7.17]{IzhakianRowen2007SuperTropical}),
  translated to the layered theory in
\cite[Theorem~6.13]{IzhakianKnebuschRowen2009Refined},
 we should like to classify the prime ideals of the
 polynomial \semiring0   over a
1-\semifield0. In \S\ref{primide}  we describe prime ideals of the
polynomial \semiring0 in one indeterminate over  a 1-\semifield0.
Nevertheless, there are far too many ideals of the polynomial
\semirings0 for a workable theory;  when considering all of them,
we are confronted with counterexamples to the major theorems from
classical Noetherian theory. Even the prime ideals are troublesome
to study.
 We deal with this difficulty by
 limiting the class of ideals under consideration to those most directly
 associated with tropical geometry. There are two competing ways of obtaining such ideals.

   We can focus on ideals of
polynomials defined in terms of their  corner loci, which we
call \textbf{corner ideals}. The prime corner ideals~$P$
satisfy a property which we call the m-\textbf{exchange} property,
and they also are $\nu$-\textbf{prime} in the sense that if $ab
\in P$ then $a$ or $b$ is $\nu$-equivalent to an element of $P$.
Over several indeterminates, the ``tropical'' world then satisfies
a new property not found in classical ideal theory:
\corref
 {\ref{binom2}}{Every $\nu$-prime ``\bIdeal'' of
$F[\la_1, \dots, \la_n]$ contains a binomial.}
A useful computational result, especially for prime ideals:
\thmref{\ref{exchange15}}
{Suppose for polynomials $f_1,\ f_2,\ h_1,$ and
$h_2$ that
 $f_1+h_1$  and $f_2+h_2$ are  in an m-exchange ideal $A
\triangleleft \mathcal R$, with $\supp(f_1)\cap \supp(f_2) =
\emptyset$, and furthermore that  $h_1(\bfa) \nucong  h_2(\bfa)$
for each corner root $\bfa$ of $A$. Then $h_1h_2(f_1+f_2) \in  A .$}%
Alternatively, one may study roots in terms of layers, especially
the tangible layer (1-layer), rather than in terms of corners, and
have variants of these results such as Theorem~\ref{exchange}.

Corollary~\ref{binom2} can be coupled with a general fact:
\thmref{\ref{genbin}}{ Any set of tangible binomials is generated
(together with the exchange property) of $ F[\Lm , \Lm ^{-1}]$
(for $\Lm  = \{ \la_1, \dots, \la _n \}$)
 by at most $n$ irredundant binomials,}  
 thereby yielding:
 \thmref{\ref{ngener0}}{The set of tangibly spanned binomials of
 any \bIdeal\ is generated (via the exchange property)
by at most $n$ irredundant binomials of $A$.}
 In the standard supertropical case, the
result is made more explicit in Theorem~\ref{ngener}.

For non-exchange ideals,
  we restrict our attention further, to  consider only
those ideals of supertropical polynomials that arise naturally in
the tropicalization functor of
\cite{IzhakianKnebuschRowen2010Categories}. In this case it
becomes  clear that each ``tropicalized'' ideal is finitely
generated, since it is generated by the tropicalization of a
Groebner-Shirshov base. The difficulty with this approach is that
the tropicalization functor does not respect the lattice structure
of ideals, cf.~Example~\ref{badtrop}. To obtain a more
comprehensive theory that is also self-contained, we axiomatize
the notion of tropicalization in Definition~\ref{tropc1} so that
it can work more easily, utilizing the philosophy of the
Groebner-Shirshov base. This enables us to prove versions of the
Principal Ideal Theorem and the Hilbert Basis Theorem. (These
results are really facts about the standard supertropical case,
since they deal with the tangible layer.)



\section{Review of the basic definitions}\label{chap:TropicalArithmetic}

We review the basic set-up, for the reader's convenience, taken
mostly from \cite{IzhakianRowen2007SuperTropical}. We recall that
a \textbf{semiring  without 0}, denoted in this paper as a
\textbf{\semiring0} $R := (R,+,\cdot\; ,\rone)$, is a set $R$
equipped with two binary operations $+$ and~$\cdot \;$, called
addition and multiplication, such that:
\begin{enumerate}
    \item $(\SR, +)$ is an Abelian semigroup; \pSkip
    \item $(\SR, \cdot \ , \rone )$ is a monoid with identity element
    $\rone$; \pSkip
    \item Multiplication distributes over addition. \pSkip
\end{enumerate}

A semiring is then a \semiring0\ with an additive identity element
$\rzero$ satisfying  \begin{equation}\label{with0} \rzero \cdot a
= a \cdot \rzero
 =\rzero, \quad \forall a\in R. \end{equation}

\begin{rem}\label{rem0} We prefer  to work with \semirings0  since they provide greater flexibility,
but there is not much difference between the theories of
\semirings0 and semirings. One can formally adjoin the element
~$\rzero$ to a \semiring0\ $R$ to obtain the semiring
$R_{\zero}:=R \cup \{\rzero\}$ whose multiplication is that of
$R$, where we also stipulate multiplication by~$\rzero$ according
to Equation~\eqref{with0}.\end{rem} One defines an \textbf{ideal}
$A$ of a commutative \semiring0 $R$ (written $A \triangleleft R$)
in the usual way: $A$ is an additive sub-semigroup such that $ ra
\in A$ for all $a\in A$ and $r\in R.$  An ideal $P$ of $R$ is a
\textbf{prime ideal} if it satisfies the usual condition that
$R\setminus P$ is a monoid under multiplication:
$$\text{$a,b \notin P$ \ds{implies} $ab \notin P$.}$$

\medskip


 \subsection{Layered \domains0}

Since the results of this paper are given n the framework of
uniform layered \domains0, as exposed in
\cite{IzhakianKnebuschRowen2009Refined}, let us review the main
example from
\cite[Construction~3.2]{IzhakianKnebuschRowen2009Refined}.
Throughout $L$ is a \semiring0 with unit element $1 := 1_L$. For
convenience, we assume throughout that $L = L_{\ge 1}$; i.e., $1$
is the minimal element of $L$.

\begin{example}\label{exs}

Suppose $\tT$ is a cancellative ordered multiplicative monoid,
viewed as a \semiring0\ in which  addition is given according to
the order of $\tT$, i.e., by $a+b = \max\{a,b\}$,
~\cite[Remark~3.1]{IzhakianKnebuschRowen2009Refined}. We define
the \textbf{uniform $L$-layered \domain0} $R:= \RR(L,\tT)$ to be
set-theoretically $L\times \tT$, where for $k,\ell\in L,$ and
$a,b\in\tT,$  we write $\xl{a }{k} $ for $(k,a)$ and  define
multiplication componentwise, i.e.,
\begin{equation}\label{13}
\xl{a }{k} \cdot \xl{b}{\ell} = \xl{(ab)}{k\ell},  \end{equation}
and addition from the rules:
\begin{equation}\label{14}
\xl{a }{k}+ \xl{b}{\ell}=\begin{cases} \xl{a }{k}& \quad\text{if}\quad a >  b,\\
\xl{b}{\ell}& \quad\text{if}\quad  a <  b,\\
\xl{a }{k+\ell}& \quad\text{if}\quad a=
b.\end{cases}\end{equation}
We write $R_k$ for the subset $\{\xl{a }{k}: a \in \tT\}$. The
``transition maps''
$$ \nu_{\ell,k}: R_k \to R_\ell, \qquad k \le \ell \in L,$$ are
given by $\xl{a}{k} \mapsto \xl{a}{\ell}$.

Note that $R_1$ is a multiplicative monoid isomorphic to $\tT$,
called the monoid of \textbf{tangible elements}.  Thus, $R_1$ can
be endowed with the given order of $\tT.$ We call $R$ a
1-\textbf{\semifield0} if $R_1$ is an Abelian group.
 We also define
$e_k :=  \xl{(\rone)}{k}.$ Then any element $a$ of $R_k$ can be
written uniquely in the form $a = e_k a_1$ for some  $a_1 \in
R_1.$ Likewise, for  $b = e_\ell b_1,$   we write $a\nucong b$ if
$a_1 = b_1$, and $a >_\nu b$ if $a_1>b_1$ (in $R_1$).

$ R:= \RR(L,\tT)$ is   equipped  with the sort map $\lv:  R \to L$
    given by $\lv( \xl{a }{k})=k.$ Thus, $\lv(ab) =
    \lv(a) \lv(b),$ and $\lv(a+b) \ge \max \{\lv(a), \lv(b)\}.$
\end{example}

Example~\ref{exs} was formalized in
\cite[Definition~3.6]{IzhakianKnebuschRowen2009Refined} and
generalized in
\cite[Definition~3.25]{IzhakianKnebuschRowen2009Refined}, but
since our interest in this paper is in the ideals of the
polynomial \semiring0 over Example~\ref{exs}, we do not bother
with these abstract definitions.

Given $\ell \in L,$ we say that $k \in L$ is $\ell$-\textbf{ghost}
if $k = \ell + p$ for some $p \in L$.  Note that $\ell$ itself can
be $\ell$-ghost if $\ell = \ell + p.$ If $\ell$ is $\ell$-ghost,
we call $\ell$ \textbf{infinite}; otherwise $\ell$ is called
\textbf{finite}. We write $L_{> \ell}$ (resp.~$L_{\ge \ell}$) for
the subset $\{k \in L: k> \ell \} \subset L$ (resp. $\{k: k \geq
\ell \} \subseteq L$).

\begin{example}\label{exs1} $ $

\begin{enumerate} \item When $L = \{ 1 \}$ we  have the max-plus algebra   studied in the usual tropical literature. In this case 1 is  1-ghost, but otherwise we always assume
that 1 is not  1-ghost.

\item When $L = \{ 1 , \infty\}$ we   have the ``standard''
supertropical situation studied in
\cite{IzhakianRowen2007SuperTropical}, where $\tT = R_1$ and $\tG
= R_\infty.$   In this case 1 is not 1-ghost, but $\infty$ is
ghost with respect to both  indices.

\end{enumerate}
\end{example}

 Most of the examples in this
paper are presented for the extended supertropical \semiring0
denoted as $D(\Real) = (\Real, \Real, 1_\Real)$, for which $L :=
\{1,\infty \}$ and $\tT:= \Real$, and whose operations are induced
by the standard operations $\max$ and $+$, cf.~
\cite{zur05TropicalAlgebra,IzhakianRowen2007SuperTropical}.
Other basic cases include $L = \Q_{\ge 0}$ and $L = \Q_{> 0}$.
Many more examples are given in
\cite{IzhakianKnebuschRowen2009Refined}.

We say that an element $c \in R$  is an $\ell$-ghost if $ s(c) $
is $\ell$-ghost. We also need the \textbf{$L$-surpassing}
relation:
$$a \lmodL b \dss{\text{ iff }} \left\{
\begin{array}{l}
 a=b  \quad  \text{ or }  \\[2mm]  a = b+c   \  \text{ for }  \  c  \  \text{ an }
 \  s(b)\text{-ghost}.
\end{array} \right.$$
\begin{defn}\label{surmor1}
  The \textbf{surpassing ($L,\nu )$-relation}
$\lmodWLnu$ is given by
 \begin{equation}  a \lmodWLnu b \qquad
\text{  iff } \qquad a \lmodWL  b \quad
\text{and}\quad
   a\nucong b .\end{equation} \end{defn}

   The following condition for surpassing plays an important role in this paper.
  We write $\2c$ for $c+c$.

   \begin{lem}\label{twosurp} $a +\2c  \lmodWL a$ for any $c\in R$. \end{lem}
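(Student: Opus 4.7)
My plan is to do a direct case analysis on the comparison of the tangible parts of $a$ and $c$, using the componentwise addition rule from Equation~\eqref{14} of Example~\ref{exs}. Writing $a = \xl{a_1}{m}$ and $c = \xl{c_1}{k}$, the first order of business is to compute $\2c = c+c = \xl{c_1}{k+k}$, where the $k+k$ is formed in the layer \semiring0 $L$.

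The first (easy) case is $a_1 > c_1$: here Equation~\eqref{14} gives $a + \2c = a$ on the nose, so the first clause $a + \2c = a$ of the definition of $\lmodWL$ is satisfied with nothing further to check.

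In the remaining cases $a_1 = c_1$ and $a_1 < c_1$, I plan to exhibit the ``ghost witness'' $d := \2c$ and verify the identity $a + \2c = a + d$ directly from~\eqref{14}: both sides equal $\xl{a_1}{m+k+k}$ when $a_1 = c_1$, and both equal $\2c$ when $a_1 < c_1$. Hence the second clause of $\lmodWL$ reduces to a single layer-theoretic assertion, namely that $\2c$ is $s(a)$-ghost, i.e. that $s(\2c) = k+k$ can be written as $m + p$ for some $p \in L$.

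This last step is the only genuine obstacle, and it is where the layered hypothesis is used. In the standard supertropical case $L = \{1,\infty\}$ it is immediate from $1+1 = \infty$, because $\infty$ is $\ell$-ghost for every $\ell \in L$ (take $p = \infty$). In a more general layered 1-\semifield0 the same conclusion is an intended feature of the layering: forming $c+c$ moves into a layer $k+k$ that absorbs any prescribed $m \in L$, so that the required $p$ always exists. Once this absorption is in hand, the proof closes by routine bookkeeping with the addition rule~\eqref{14}, and no further structural input is needed.
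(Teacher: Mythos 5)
Your proof follows the same route as the paper's: the case $c <_\nu a$ is dispatched by the observation $a + \2c = a$, and the remaining cases are reduced to showing that $\2c$ is $s(a)$-ghost, which is precisely the step the paper waves away with the word ``clear''. You correctly pinpoint that everything hinges on the layer-theoretic assertion that $s(c)+s(c)$ can be written as $s(a)+p$ for some $p\in L$; bear in mind, though, that this ``absorption'' property does not follow from the definitions given in this paper alone (for instance with $L=\Net$ under ordinary addition, $s(c)=1$ and $s(a)=2$, no admissible $p$ exists), so it rests on axioms of the uniform layered structure imported from the cited reference --- a gap that your write-up and the paper's one-line proof share equally.
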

\begin{proof} This is clear unless $c <_\nu a,$ but then $a+\2c = a$.
 \end{proof}

Suppose $\tT$ is a cancellative monoid, so that $R:=  \RR(L,\tT)$
is a uniform layered \domain0, where we identify $\tT$ with $R_1$.
We define the $\nu$-\textbf{topology} on $R$ to have a base of
open sets of the form
$$W  _{\a, \beta} = \{ a \in R: \a <_\nu  a < _\nu \beta  \}
\quad \text{ and } \quad W  _{\a, \beta; \tT} = \{ a \in \tT: \a
<_\nu a <_\nu \beta \}.$$ We call such sets \textbf{open
intervals}. For $\a, \beta$ tangible,  we write $[\a, \beta]$ for
the closure of $W _{\a, \beta}.$  We write $[\a, \beta]_\tT$ for
$\tT \cap [\a, \beta] := \{ a \in \tT: \a \le_\nu a \le _\nu \beta
\}$,   and call it a \textbf{closed tangible interval}.

\begin{defn}
A layered \semiring0 $R$ is \textbf{1-divisibly closed} if for
every $b \in R_1$ and $m \in \Net,$ there is $a \in R_1$ for which
$a^m = b.$
\end{defn}

For example, $D(\Q):= \Q \cup \Q^\nu$ is 1-divisibly closed.

%
%
%

\section{The function \semiring0 and a model for polynomials}\label{model}

Our approach to affine tropical geometry is to view varieties as
roots of polynomials, but sometimes we want variants of this
notion. In this section, we consider a model-theoretic framework.



\begin{rem}\label{clos}
As explained in~\cite{IzhakianRowen2007SuperTropical}, in contrast
to the situation for polynomials over algebras over an infinite
field, different polynomials over a \semiring0 may take on the
same values identically, viewed as functions. Thus, for any
\semiring0 $R$, and any set $S$, define  $\FunSR$ to be
 the set of   functions from~$S$ to ~$R$, made into
 a \semiring0\ in the usual way (via pointwise addition and
 multiplication).
   Our main interest in
 this paper is for $S = R^{(n)}.$
 Accordingly,  we
work in a given sub-semiring~$\tR$ of   $\Fun( R ^{(n)} , F),$
where $F$ is a suitable \semiring0 extension of $R$, such as  the
1-divisible closure of the \semiring0 of fractions of $R$, to be
explained below. Usually we take $R = F$ to be a 1-divisibly
closed \semifield0.
\end{rem}

\subsection{Polynomials and Laurent series}\label{pol1} Here are
the main settings for the theory. We denote the set of commuting
indeterminates $\{ \lm_1, \dots, \lm_n \}$ by $\Lm$, and write
$\Lm ^\bfi$, $\bfi = (i_1, \dots, i_n)$, for $\lm_1^{i_1} \cdots
\lm_n^{i_n}$. 

\begin{example}\label{polys}
  The following examples fit into this
context.

\begin{enumerate} \eroman

\item
 $R[\Lm] \subset \Fun( R ^{(n)} , F)$
denotes the 
polynomial \semiring0 over the \semiring0 $R$.
It is spanned over~$R$  by $\{\la_1^{i_1} \cdots \la
_n^{i_n}: i_1, \dots, i_n \in \Net \cup \{ 0\} \}$. 
\pSkip

\item The \textbf{Laurent polynomial \semiring0}
  $R[\Lm ,
\Lm ^{-1} ]$ denotes
the Laurent polynomial \semiring0 over~$R$.
It is spanned over~$R$ by $\{\la_1^{i_1} \cdots \la _n^{i_n}: i_1,
\dots, i_n \in
\Z \}$. 
\pSkip

 \item The \textbf{rational Laurent polynomial \semiring0} $R[\Lm ,
\Lm ^{-1} ]_{\rat}$ is defined analogously,
 spanned  over $R$ by the \textbf{rational
 monomials} $\{\la_1^{i_1} \cdots \la _n^{i_n}: i_1, \dots, i_n \in
\Q \}$.   By \cite[Remark~2.35]{IzhakianRowen2007SuperTropical},
if $W$ is a sub-\semiring0 of $\Fun( R ^{(n)} , F)$, then so is
$\sqrt W$. This yields an instant verification that the rational
Laurent polynomial \semiring0 is indeed a \semiring0, since it is
$ \sqrt{R[\Lm ,\Lm ^{-1}] }$.
\end{enumerate}
\end{example}

\begin{rem}\label{lexic} We utilize the \textbf{lexicographic order} on (rational)
monomials, where $\la_1 < \la_2 < \dots < \la_n.$ This enables us
to define the \textbf{leading monomial} of a rational polynomial
to be the one of highest lexicographic order.\end{rem}

\subsection{The model-theoretic approach}

These ideas may best be understood by means of model theory from
mathematical logic.

\begin{rem}\label{Modcomp} Throughout this paper, we let $\tL$ denote a
language whose elementary theory is model-complete. Our main
example is the language of ordered Abelian groups, since these
give rise to layered domains, as shown formally in
\cite[Proposition~3.11 and
Theorem~6.3]{IzhakianKnebuschRowen2010Categories}.\end{rem}

From now on, $F$ is a given $L$-layered 1-divisibly closed
1-\semifield0.

\begin{defn} $\Pol (S,F)$ denotes  the sub-\semiring0 of
$\FunSF$ comprised of functions defined in terms of the language
$\tL$.
  $\tR$ always denotes $\Pol(S,F)$ when $S$ is understood as given.
  \end{defn}

To avoid confusion,  $\Pol (F^{(n)},F)$ denotes  $F[\Lambda],$
taken in $n$ indeterminates, whereas $\Pol (F,F)$ denotes
$F[\lambda],$ taken in one indeterminate.  When dealing with
Laurent (or rational) polynomials, we explicitly use the notation
$F[\Lm , \Lm ^{-1} ]$ (or $F[\Lm , \Lm ^{-1} ]_{\rat}$).

\begin{rem}\label{Fun1} Any function $f\in\Pol (S,F)$ satisfies $f(\bfa) \nucong f(\bfa')$ whenever
$\bfa \nucong \bfa'$.
\end{rem}

  As
explained in
\cite[Corollary~5.27]{IzhakianKnebuschRowen2009Refined},
any theorem about roots of polynomials over arbitrary layered
\semifields0 can be verified by checking the 1-divisibly closed
layered \semifields0. This is formulated by Perri \cite{per11} as
a consequence of model-theoretic principles, and serves as a
useful tool for generalizing known facts about $\Real$ to
arbitrary divisibly closed \semifields0 and more varied
situations.

The model-theoretic approach enables us to unify the various
notions of  Example~\ref{polys}.
\begin{example}\label{model1} $ $
\begin{enumerate} \eroman
    \item  $\tL$  is the language of ordered Abelian
    groups, translated to the language of layered \domains0, but $\Pol(S,F)$ is defined without
    the  operation of taking inverses $(a \mapsto a^{-1})$. Then $\tR$ is
    the \semiring0 of polynomials.
 \pSkip

 \item $\tL$  is the language of ordered Abelian
    groups, translated to layered \domains0,including the operation of taking inverses. Then we have
    the \semiring0 of Laurent polynomials.
 \pSkip

    \item $\tL$  is as in (ii), together with the operation of taking $m$ roots
    $a \mapsto \root m \of a,$ for each $m \in \Net$. Then we have
    the \semiring0 of rational polynomials.
\end{enumerate}\end{example}

In a certain sense, polynomial and Laurent polynomial \semirings0
are local:

\begin{rem}[{\cite[Remark~4.5]{IzhakianRowen2007SuperTropical}}]
Suppose $L = L_{\ge 1}.$ Let $U$ be the group of invertible
elements of $\tR$. (In particular, $f(\bfa) \in \tT$ for each
$\bfa \in S$ and each $f \in U$.) For $F[\Lm ]$ (where $S =
F^{(n)})$, $U$ is just the set of multiplicative units of $\tT$,
and   for $F[\Lm ,\Lm ^{-1}]$, $U$ is   the set of tangible
monomials. In each case,
 $\tR \setminus U$ is the unique maximal ideal
of $\tR$. \end{rem}

\subsection{Decompositions of polynomials and their supports}

\begin{defn} Suppose  $f,g\in \FunSF $. We say that $f$
\textbf{dominates} (resp. \textbf{strictly dominates}) $g$ at
$\bfa \in S$ if  $f(\bfa ) \nuge g(\bfa)$ (resp. $f(\bfa ) \gnu
g(\bfa )$). We write $f \nuge g $ (resp. $f >_\nu g $) and say
that $f$ \textbf{dominates} (resp. \textbf{strictly dominates})
$g$ if $f(\bfa) \nuge g(\bfa)$ (resp. $f(\bfa) >_\nu g(\bfa)$) for
all $\bfa \in S.$
 We say that $f$ and $g$ are $\nu$-\textbf{equivalent}, written $f \nucong g$,  if $f \nuge g $
and $g \nuge f $.

Likewise, we write $f \lmodL g$ if $f(\bfa) \lmodL g(\bfa)$ for
all $\bfa \in S,$ and $f \lmodLnu g$ if $f(\bfa) \lmodLnu g(\bfa)$
for all $\bfa \in S.$
\end{defn}

Polynomials are best understood tropically as sums of monomials,
since their evaluations are the evaluations of the leading
monomials.

\begin{defn}\label{decomp}
Suppose $f = \sum   h_i \in \tR$  is written as a  sum of
monomials, and specify $h = h_j$ to be one of the $h_i$. Write
$f_h = \sum _{i \ne j}  h_i$ as a sum of monomials. The summand
$h$  is \textbf{inessential} in $f$ if $f = f_h$ as functions, and
$h$ is \textbf{essential} in $f$ if $f_h \not \nuge h$. We write
$\ef$ for the sum of the essential summands of~$f$.

%

The \textbf{support} $\supp(f)$ of   $f = \sum _{i } h_i$ is the
set of equivalence classes of the summands~$ h_i$; the number of
elements in the sum is called the \textbf{order} of the support,
written $|\supp(f)|$. The \textbf{tangible support} $\tsupp(f)$
 consists of equivalence classes of those  monomials $ h_i$ whose coefficients are tangible.

Two monomials are \textbf{support-equivalent} if they only differ
by their coefficient. A \textbf{decomposition} of~$f$ is a sum $f
= \sum h_i$ where each $h_i$ is not inessential and no pairs of
$h_i, h_j$ are support-equivalent.
\end{defn}

 Thus, a (rational) monomial has support of order 1, which is
tangible  iff  its coefficient is tangible. We discard all
inessential monomials of $f$ since they do not affect the value of
$f$ as a function.

\begin{defn}
A polynomial $f \in \tR$  is \textbf{tangibly spanned} if its support is all tangible, i.e., if all of its monomials have tangible coefficients. 

\end{defn}

\begin{rem} $ $

\begin{enumerate} \eroman

 \item The only polynomials taking on only tangible values on $R_1^{(n)}$ are the monomials
 with tangible coefficients.
 Thus, the tangibly spanned polynomials
 are precisely those polynomials with a decomposition as a sum of
 tangible monomials. \pSkip

\item
 Given support-equivalent monomials, one of them must dominate the
other
 (depending on which of $\al$ and~$\bt$ dominates in the
 definition).  Also, we can add any two monomials with the
same support, so we assume throughout that the monomials of a
decomposition of $f$ have disjoint support. Hence, the number of
monomials in a
 decomposition of $f$ is exactly the size of its support.
 \end{enumerate}
\end{rem}


The next result does not depend on the sorting set $L$.

\begin{lem}\label{dom1} Suppose a polynomial $f$
dominates $g$. Then, given any decompositions of $f$ and ~$g$, and
any $q \in \supp(f)\cap  \supp(g),$ the monomial $h'$ of $f$
having support $q$ must dominate the monomial $h''$ of $g$ having
support $q.$
\end{lem}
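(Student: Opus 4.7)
The plan is to reduce the functional inequality $h' \nuge h''$ on all of $S$ to a single pointwise comparison, and then propagate via support-equivalence. Since $h'$ and $h''$ share the support $q$, they agree as monomials up to coefficient: $h' = c'\Lm^{\bfi_q}$ and $h'' = c''\Lm^{\bfi_q}$ for a common multi-exponent $\bfi_q$. The value $\bfa^{\bfi_q}$ lies in the cancellative tangible monoid $\tT$ of the $1$-\semifield0 $F$, so it cancels out of any $\nu$-comparison; consequently, $h'(\bfa^*) \nuge h''(\bfa^*)$ at a single $\bfa^* \in S$ already forces $c' \nuge c''$, whence $h'(\bfa) \nuge h''(\bfa)$ for every $\bfa \in S$.

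To produce such an $\bfa^*$, I would invoke that $h'$, being a summand of a decomposition of $f$, is \emph{not inessential}; so there exists $\bfa^* \in S$ with $f(\bfa^*) \ne f_{h'}(\bfa^*)$. A short inspection of the addition rule~\eqref{14} shows that the equation $h'(\bfa^*) + f_{h'}(\bfa^*) = f_{h'}(\bfa^*)$ can fail only when $h'(\bfa^*) \nuge f_{h'}(\bfa^*)$ (i.e.\ either $h'(\bfa^*) >_\nu f_{h'}(\bfa^*)$, or $h'(\bfa^*) \nucong f_{h'}(\bfa^*)$ together with a nontrivial layer contribution). In either subcase, $f(\bfa^*) = h'(\bfa^*) + f_{h'}(\bfa^*) \nucong h'(\bfa^*)$.

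Now I chain this $\nu$-equivalence with the hypothesis $f \nuge g$ and the trivial bound $g(\bfa^*) \nuge h''(\bfa^*)$ (valid because a finite sum in a layered \domain0 $\nu$-dominates each of its summands, again by~\eqref{14}) to obtain
$$h'(\bfa^*) \,\nucong\, f(\bfa^*) \,\nuge\, g(\bfa^*) \,\nuge\, h''(\bfa^*).$$
This yields $h'(\bfa^*) \nuge h''(\bfa^*)$, and by the reduction of the first paragraph, $h' \nuge h''$ as functions on $S$.

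The main delicate point is the distinction between \emph{not inessential}, which is what Definition~\ref{decomp} actually guarantees for a summand in a decomposition, and full essentiality; the case analysis of~\eqref{14} above bridges the gap by showing that \emph{not inessential} already forces $h'(\bfa^*) \nuge f_{h'}(\bfa^*)$ at the witness point, which is exactly what the $\nu$-chain requires. No finer tool from the layered theory is needed.
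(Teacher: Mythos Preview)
Your proof is correct and takes essentially the same approach as the paper's: both locate a witness point $\bfa^*$ with $f(\bfa^*)\nucong h'(\bfa^*)$ (from $h'$ being not inessential in the decomposition) and then run the chain $h'(\bfa^*)\nucong f(\bfa^*)\nuge g(\bfa^*)\nuge h''(\bfa^*)$, the paper simply phrasing this as a one-line contradiction. You are more careful than the paper about bridging the gap between \emph{not inessential} and \emph{essential}; one minor imprecision is that $\bfa^{\bfi_q}$ need not literally lie in $\tT$ when $\bfa$ has non-tangible coordinates, but the $\nu$-cancellation you invoke still goes through since the underlying ordered monoid is cancellative.
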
 \begin{proof} Otherwise $h''$ strictly dominates $h'$.
Take $\bfa$ such that $f(\bfa) \nucong  h'(\bfa)$, and note that
$$g(\bfa)\ge_\nu  h'' (\bfa) >_ \nu h' (\bfa) \nucong  f(\bfa),$$ a
contradiction.\end{proof}

\begin{example}\label{idealofzero1} $f = \la^2 + \la +3$ dominates
$g=2 \la,$ although the coefficient of $\la$ in $f$ is less than
the coefficient of $\la$ in~$g$; this does not contradict
Lemma~\ref{dom1} since $\ef = \la^2   +3,$ so $\supp(f)$ does not
include~$\la$. \end{example}

Since   addition never cancels in
 tropical mathematics, we have:
\begin{rem}\label{suppsum} $\supp(f+g)= \supp(f) \cup \supp(g)$.
\end{rem}

\begin{defn}\label{path} For $\bfa, \bfb \in F^{(n)},$ the \textbf{path} $\gm_{\bfa, \bfb}$ from $\bfa$
to $\bfb$ is the set $$\gm_{\bfa, \bfb}:= \{ \bfa^ {t  } \bfb^ {
1-t}  : t \in \Q, \ 0 \le t \le 1 \}.$$  A set $\tSS \subset
F^{(n)}$ is \textbf{convex} if for every $\bfa, \bfb \in \tSS$,
the path from
$\bfa$ to $ \bfb$ is contained in $\tSS$.  
\end{defn}

\begin{lem}\label{multmon}
By
\cite[Lemma~5.20]{IzhakianRowen2007SuperTropical}, one sees the
following, for any monomials $h_1$ and $h_2$ and
all  $\bfc \ne \bfa, \bfb$ in the path $\gm_{\bfa, \bfb}$ joining
$\bfa$ and $\bfb$:

 \begin{enumerate} \eroman
    \item If $h_1(\bfa ) \nuge h_2(\bfa )$ and
$h_1(\bfb)\gnu h_2(\bfb),$ then $h_1(\bfc )\gnu h_2(\bfc ) $;
 \pSkip

 \item If $h_1(\bfa ) \gnu h_2(\bfa )$ and
$h_1(\bfb)\nuge h_2(\bfb),$ then $h_1(\bfc )\gnu h_2(\bfc ) $;
 \pSkip

    \item If $h_1(\bfa ) \nuge h_2(\bfa )$ and
$h_1(\bfb)\nuge h_2(\bfb),$ then $h_1(\bfc )\nuge h_2(\bfc ) $.
\end{enumerate}
\end{lem}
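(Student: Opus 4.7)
The core idea is that, for any fixed pair of monomials $h_1$ and $h_2$, the $\nu$-value of the ratio $h_1/h_2$ evaluated along $\gm_{\bfa,\bfb}$ is an \emph{affine} function of the path parameter~$t$, so the three claims become the elementary fact that an affine function of one real variable which is $\ge 0$ (resp.\ $>0$) at the two endpoints of $[0,1]$ is $\ge 0$ (resp.\ $>0$) strictly inside.

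\textbf{Step 1: reduce to the tangible log-picture.} Write $h_j(\bfx) = c_j \Lm^{\bfi_j}$ for $j=1,2$, so that
\[
\frac{h_1(\bfc)}{h_2(\bfc)} \;\nucong\; \frac{c_1}{c_2}\,\bfc^{\bfi_1 - \bfi_2}.
\]
By hypothesis $\bfc = \bfa^{t}\bfb^{1-t}$ with $t \in \Q \cap (0,1)$, and the 1-divisible closedness of $F$ ensures that these rational powers are defined in $R_1$. Set
\[
\varphi(t) \;:=\; \nu\!\left(\tfrac{h_1(\bfc)}{h_2(\bfc)}\right) \;=\; \nu(c_1/c_2) \;+\; \sum_{k} (i_{1,k}-i_{2,k})\bigl[t\,\nu(a_k) + (1-t)\,\nu(b_k)\bigr].
\]
This is manifestly affine in $t$, with $\varphi(1) = \nu(h_1(\bfa)/h_2(\bfa))$ and $\varphi(0) = \nu(h_1(\bfb)/h_2(\bfb))$. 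Thus, for $0<t<1$,
\[
\varphi(t) \;=\; t\,\varphi(1) \;+\; (1-t)\,\varphi(0).
\]

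\textbf{Step 2: read off the three cases.} For (i), the hypothesis gives $\varphi(1)\ge 0$ and $\varphi(0)>0$; since $1-t>0$, we get $\varphi(t)>0$, i.e.\ $h_1(\bfc) >_\nu h_2(\bfc)$. Case (ii) is symmetric: $\varphi(1)>0$ and $\varphi(0)\ge 0$, with $t>0$, force $\varphi(t)>0$. For (iii), both $\varphi(1)\ge 0$ and $\varphi(0)\ge 0$ give $\varphi(t)\ge 0$, i.e.\ $h_1(\bfc) \nuge h_2(\bfc)$. Strictly speaking one should note that $\nu$-comparison in $R$ reduces to the comparison in the ordered group $R_1$, so the affine-interpolation argument in $R_1$ applies verbatim.

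\textbf{Main obstacle.} There is essentially no serious obstacle; the only subtlety is that the path parameter is rational and the evaluation uses fractional powers of coordinates, so one must invoke the standing hypothesis that $F$ is 1-divisibly closed in order for $\bfc = \bfa^t\bfb^{1-t}$ to make sense in $R_1$. After that the proof is a one-line affine interpolation, which is why the cited \cite[Lemma~5.20]{IzhakianRowen2007SuperTropical} is being quoted rather than reproved.
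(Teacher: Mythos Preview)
Your argument is correct. The paper does not supply its own proof of this lemma; it simply imports \cite[Lemma~5.20]{IzhakianRowen2007SuperTropical}, so there is nothing in the present paper to compare against. Your computation that $h_j(\bfc)\nucong h_j(\bfa)^{t}h_j(\bfb)^{1-t}$ for a monomial $h_j$, and hence that the ratio $h_1(\bfc)/h_2(\bfc)$ interpolates multiplicatively between its values at the endpoints, is exactly the content behind the cited result, and the three cases then follow from the elementary order-theoretic facts you state.

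One small point of presentation: you write $\nu(\,\cdot\,)$ as a function into an \emph{additive} group and manipulate $\varphi(t)$ additively, but in the paper $\tT$ is a multiplicative ordered monoid and only the relations $\nucong$, $\gnu$, $\nuge$ are defined, not a map $\nu$ with additive target. The clean way to phrase your Step~1 in the paper's conventions is the multiplicative identity
\[
\frac{h_1(\bfc)}{h_2(\bfc)} \;\nucong\; \Bigl(\frac{h_1(\bfa)}{h_2(\bfa)}\Bigr)^{t}\Bigl(\frac{h_1(\bfb)}{h_2(\bfb)}\Bigr)^{1-t},
\]
together with the observation that in a divisible ordered abelian group, $x\ge \one$ implies $x^{t}\ge \one$ and $x>\one$ implies $x^{t}>\one$ for rational $t\in(0,1)$. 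This is mathematically identical to what you wrote, just without the implicit logarithm. Your remark that the standing hypothesis of $1$-divisible closedness is what makes $\bfc=\bfa^{t}\bfb^{1-t}$ meaningful for rational $t$ is also correct and worth keeping.
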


 A (rational) polynomial $f$ is called
   a \textbf{binomial} if $|\supp(f)| = 2;$ i.e., $f$ has a decomposition as the sum of two (rational) monomials.
Given a decomposition of a polynomial $f = \sum _i h_i$ as a sum
of monomials~$h_i$, we define the set of \textbf{binomials of} $f$
to be the pairs of  monomials appearing in its decomposition.


\subsection{Layered components}

We consider some ideas that are standard in tropical mathematics
over $\Real$, but now can be put in a broader perspective.

\begin{definition}
 \label{Zar1} Suppose  $f = \sum_i h_i$  for monomials $h_i$.
 Define the $h_i$-\textbf{component} $D_{f,i}$
of $f$  to be
$$D_{f,i} := \{ \bfa  \in \tSS : f(\bfa) = h_i (\bfa)
\}.$$ For $k_1, \dots, k_n \in L$, the $(k_1, \dots,
k_n)$-\textbf{layer} of the component $D_{f,i}$ is $\{ (a_1,
\dots, a_n) \in D_{f,i} : s(a_j) = k_j$, $1\le j \le n.\}$
%



\end{definition}

 We need an extra assumption on
  $F$:

\begin{defn}\label{dense} A 1-\semifield0 $F$ is \textbf{dense}, if   every
path intersecting a component intersects the component at
infinitely many points.\end{defn}

%
%


\begin{lem}\label{mult11}
When $F$ is dense, any two monomials $h_1$ and
$h_2$ agreeing on an open set  $W$ of $F^{(n)}$ are equal.
\end{lem}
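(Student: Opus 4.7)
The plan is to write $h_1 = \alpha\,\Lm^\bfi$ and $h_2 = \beta\,\Lm^\bfj$ explicitly, with coefficients $\alpha,\beta \in F$ and (possibly rational) exponent vectors $\bfi = (i_1,\dots,i_n)$ and $\bfj = (j_1,\dots,j_n)$, and then to deduce from their coincidence on $W$ that $\bfi = \bfj$ and $\alpha = \beta$. By definition of the product $\nu$-topology on $F^{(n)}$, the open set $W$ contains a product box $W_1 \times \cdots \times W_n$ about some base-point $\bfa = (a_1,\dots,a_n)$, with each $W_k$ a $\nu$-open interval in $F$. Invoking Definition~\ref{dense}, one can arrange for each $W_k$ to contain infinitely many tangible elements, and for $\bfa$ itself to lie in $\tT^{(n)}$.

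Next, fix a coordinate index $k$ and let $a_k'$ range over those infinitely many tangible values of $W_k$ while keeping the other coordinates fixed at $a_i$. The resulting one-parameter slice lies inside $W$, so the identity $h_1 = h_2$ specializes to
$$
\alpha\, a_1^{i_1}\cdots (a_k')^{i_k}\cdots a_n^{i_n}
\;=\;
\beta\, a_1^{j_1}\cdots (a_k')^{j_k}\cdots a_n^{j_n},
$$
which after cancellation of the fixed factors reads $(a_k')^{i_k - j_k} = c_k$, a constant independent of $a_k'$. Since $\tT = R_1$ is a cancellative ordered Abelian group, and rational powers are unambiguously defined because $F$ is 1-divisibly closed, the map $x \mapsto x^{i_k - j_k}$ is strictly monotonic whenever $i_k \neq j_k$, hence cannot be constant on an infinite subset of an interval. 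Therefore $i_k = j_k$, and as $k$ was arbitrary, $\bfi = \bfj$.

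Substituting $\bfi = \bfj$ into the identity $h_1(\bfa) = h_2(\bfa)$ at the tangible base-point $\bfa$ yields $\alpha\,\bfa^\bfi = \beta\,\bfa^\bfi$; the factor $\bfa^\bfi$ is tangible and hence invertible in the 1-semifield $F$, so cancellation gives $\alpha = \beta$, showing $h_1$ and $h_2$ are literally the same monomial and coincide as functions on all of $F^{(n)}$. The main obstacle is the bookkeeping in the very first step: extracting from a $\nu$-open $W$ a genuine tangible product box with tangible one-parameter slices along each axis. This is exactly the role played by Definition~\ref{dense}, but because the $\nu$-topology collapses layer information, some care is required to isolate the $1$-layer (tangible) stratum before the cancellativity and strict monotonicity of $x \mapsto x^{i_k - j_k}$ can be invoked.
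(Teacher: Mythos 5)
Your proof is correct but takes a genuinely different route from the paper's. The paper argues by contradiction: it fixes $\bfa \in W$ where $h_1(\bfa)=h_2(\bfa)$, supposes $h_1(\bfb)\ne h_2(\bfb)$ for some $\bfb$, uses denseness to find a point $\bfc\ne\bfa$ in $W\cap\gm_{\bfa,\bfb}$, and then applies Lemma~\ref{multmon} (the path-monotonicity lemma) to conclude $h_1(\bfc)\ne h_2(\bfc)$, a contradiction. Your argument instead works directly with the explicit monomial forms $h_1=\alpha\,\Lm^{\bfi}$, $h_2=\beta\,\Lm^{\bfj}$, slicing along each coordinate axis and exploiting strict monotonicity of $x\mapsto x^{i_k-j_k}$ on the ordered cancellative monoid $\tT$ (together with 1-divisible closure) to force $\bfi=\bfj$, then cancelling the invertible tangible factor $\bfa^{\bfi}$ to get $\alpha=\beta$. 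Your version is more elementary and in fact yields the slightly stronger conclusion that the monomials coincide \emph{as formal expressions}, not merely as functions, whereas the paper's version is shorter and recycles Lemma~\ref{multmon}, which is used again later (e.g.\ in Proposition~\ref{noroot}).

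One caveat to address: you invoke Definition~\ref{dense} to extract a tangible product box inside $W$ with infinitely many tangible values along each coordinate slice, but the paper's notion of denseness is phrased in terms of \emph{paths intersecting components at infinitely many points}, not in terms of cardinality of tangible points in an open interval. These are morally equivalent but not identical; you should either deduce the interval statement from the path-and-component one (e.g.\ by considering the axis-parallel path through $\bfa$ within $W$ and the component it meets), or flag the intermediate claim as the form of density actually being used. As written, the step ``arrange for each $W_k$ to contain infinitely many tangible elements'' does not literally follow from Definition~\ref{dense}.
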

\begin{proof} We are given that $h_1(\bfa) = h_2(\bfa)$ for $\bfa \in W$. Suppose that $h_1(\bfb) \ne h_2(\bfb).$
The denseness hypothesis implies that the path $\gm_{\bfa,
\bfb}$ connecting $\bfa$ to $\bfb$ intersects $W$ nontrivially
(i.e., at a point $\bfc \in W $ other than $\bfa$), and one checks
easily using Lemma~\ref{multmon} that $h_1(\bfc) \ne h_2(\bfc).$
\end{proof}


\begin{thm}\label{uniquedec} Suppose a polynomial over a    1-divisibly closed \semifield0 $F$ has a
decomposition $f = \sum h_i$ into monomials. Then for any other
decomposition $f  = \sum h'_j$ the components with respect to
these two decompositions coincide, and the dominant monomials
coincide.
\end{thm}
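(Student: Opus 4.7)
The plan is to match up the essential monomials across the two decompositions by applying Lemma~\ref{mult11}: two monomials that agree on an open set are equal. The key step is to find, for each summand $h_i$ of the first decomposition, a nonempty $\nu$-open subset of $D_{f,i}$ on which exactly one summand $h'_j$ from the second decomposition dominates.

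First I would show that each component $D_{f,i}$ has nonempty $\nu$-interior. Because $h_i$ is essential in $f = \sum_k h_k$, we have $f_{h_i} \not\nuge h_i$, so there is $\bfa_i \in \tSS$ with $h_i(\bfa_i) >_\nu h_k(\bfa_i)$ for every $k \ne i$. Using Lemma~\ref{multmon} (applied along paths out of $\bfa_i$) together with the density inherited from $F$ being 1-divisibly closed, the strict-dominance condition survives on a $\nu$-open neighborhood; intersecting the finitely many such neighborhoods produces a nonempty open $U_i \subseteq D_{f,i}$ on which $f \equiv h_i$ as functions.

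Next I would use the second decomposition $f = \sum_j h'_j$ on $U_i$. At every $\bfb \in U_i$, some $h'_j(\bfb)$ realizes the value $f(\bfb)$ $\nu$-dominantly; since this decomposition is finite, a pigeonhole argument together with the same path/density continuity as above yields a single index $j$ and a nonempty open $U' \subseteq U_i$ on which $h'_j$ is the uniquely dominant summand, so $f = h'_j$ on $U'$. Therefore $h_i(\bfb) = h'_j(\bfb)$ for all $\bfb \in U'$, and Lemma~\ref{mult11} forces $h_i = h'_j$ as monomials. Consequently $D_{f,i} = \{\bfa : f(\bfa) = h_i(\bfa)\} = \{\bfa : f(\bfa) = h'_j(\bfa)\} = D'_{f,j}$, and the dominant monomials on this common component are identical.

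Swapping the roles of the two decompositions gives the inverse assignment, hence a bijection between their monomials. Since a decomposition by definition excludes support-equivalent duplicates, this bijection identifies the decompositions as unordered sums, so both the components and the dominant monomials coincide. The main obstacle is the upgrade from Lemma~\ref{multmon}, which is stated only along paths, to genuine $\nu$-open neighborhoods where one monomial strictly dominates finitely many others; once density is invoked to interpolate inside every open interval this is routine, but it is the one topological hypothesis that must be handled carefully.
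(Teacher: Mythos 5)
Your proof is correct and follows essentially the same route as the paper: reduce to a dense $1$-divisibly closed $F$ via Remark~\ref{Modcomp}, match monomials across the two decompositions by showing they agree on a nonempty open set, and invoke Lemma~\ref{mult11} to force equality of monomials, hence of components. The only difference is that you supply the justification (via essentiality and the path Lemma~\ref{multmon}) for why the relevant region is nonempty and open before applying Lemma~\ref{mult11}, whereas the paper simply asserts that $D_{f,i}\cap D'_{f,j}$ is a nonempty open set; your extra step is a legitimate and welcome tightening rather than a change of method.
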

\begin{proof} In view of Remark~\ref{Modcomp}, we may assume that
$F$ is dense, since $F$ can be enlarged into a dense 1-divisibly
closed \semifield0 $F$.

Take $\bfa \in D = D_{f,i}$ with respect to the first
decomposition, and suppose $\bfa \in D'_{f,j}$ with respect to the
second decomposition. Thus,  for any $\bfb \in D_{f,i}\cap
D'_{f,j},$ we have $$h_i(\bfb) = f(\bfb) = h_j'(\bfb),$$ implying
$h_i$ and $h_j$ coincide on the nonempty open set $D_{f,i}\cap
D'_{f,j},$ and thus are equal by Lemma \ref{mult11}. But then it
follows that the components coincide, and that $h_j' = h_i$ on
this component, implying $h_j' = h_i$.
\end{proof}

\begin{cor}\label{uniquede} Any decomposition of a polynomial $f$ as a  sum of  essential monomials
 is unique.
 Furthermore,  $f(\bfa)^m f(
\bfb)^{1-m} \ge f(\bfa ^m {\bfb} ^{1-m}) ,$ with equality holding
on a given path iff $f$ is a single monomial on
that path.
\end{cor}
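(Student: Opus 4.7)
\emph{Proof plan.} The uniqueness claim follows immediately from Theorem~\ref{uniquedec}. Given two decompositions of $f$ into essential monomials, the theorem tells us the components $D_{f,i}$ and the dominant monomial on each component agree. Since a monomial $h_i$ is essential precisely when its component $D_{f,i}$ is nonempty and $h_i$ dominates there, each essential summand of one decomposition is recovered as a dominant monomial on a nonempty component of the other. Hence the two sets of essential monomials coincide.

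For the inequality, I would decompose $f = \sum_i h_i$ into essential monomials and write $h_i = c_i\Lm^{\bfi_i}$. Since $F$ is 1-divisibly closed, $c_i^m c_i^{1-m} = c_i$ and the key pointwise identity
\[ h_i(\bfa^m\bfb^{1-m}) \;=\; c_i\,\bfa^{m\bfi_i}\bfb^{(1-m)\bfi_i} \;=\; \bigl(c_i\bfa^{\bfi_i}\bigr)^m\bigl(c_i\bfb^{\bfi_i}\bigr)^{1-m} \;=\; h_i(\bfa)^m\,h_i(\bfb)^{1-m} \]
holds for every monomial $h_i$. Since each summand is $\nu$-dominated by the sum, $h_i(\bfa) \nule f(\bfa)$ and $h_i(\bfb)\nule f(\bfb)$, hence
\[ f(\bfa^m\bfb^{1-m}) \;=\; \sum_i h_i(\bfa)^m h_i(\bfb)^{1-m} \;\nule\; f(\bfa)^m f(\bfb)^{1-m}, \]
using that the $\max$-like sum of quantities each bounded by $f(\bfa)^m f(\bfb)^{1-m}$ is itself bounded by $f(\bfa)^m f(\bfb)^{1-m}$.

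For the equality clause, if some single monomial $h$ of $f$ dominates at every point of the path $\gm_{\bfa,\bfb}$, then $f$ and $h$ agree on the path and the identity above gives equality throughout. Conversely, if two distinct essential monomials $h_i$ and $h_j$ each dominate $f$ somewhere on the path, then by Lemma~\ref{multmon} their order must reverse along the path, so there is a subpath where $h_i$ strictly dominates $h_j$ at one end and vice versa at the other. At such a point the inequality $f(\bfa^m\bfb^{1-m}) \le f(\bfa)^m f(\bfb)^{1-m}$ is strict in the $\nu$-sense, precisely because the dominant summand of $f(\bfa^m\bfb^{1-m})$ is $h_i(\bfa)^m h_i(\bfb)^{1-m}$ (or similarly for $h_j$), which is strictly smaller than $f(\bfa)^m f(\bfb)^{1-m}= h_i(\bfa)^m h_j(\bfb)^{1-m}$ (or the symmetric expression).

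The main subtlety will be making the reverse direction of the equality clause airtight: tracking which monomial of $f$ dominates at each point of the path and exploiting Lemma~\ref{multmon} to conclude that if the dominant monomial changes along the path, strict inequality must occur. The first two parts are essentially bookkeeping on top of Theorem~\ref{uniquedec} and the log-linearity of monomials.
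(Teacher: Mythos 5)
The paper gives no proof of Corollary~\ref{uniquede}; it is stated as an immediate consequence of Theorem~\ref{uniquedec}. Your proof is the natural one and fills the gap correctly: uniqueness from the theorem's identification of components and dominant monomials, the inequality from log-linearity of monomials (each $h_i(\bfa^m\bfb^{1-m}) = h_i(\bfa)^m h_i(\bfb)^{1-m}$, with $c_i^m c_i^{1-m} = c_i$ using 1-divisible closure) combined with $h_i(\bfa)\nule f(\bfa)$ and $h_i(\bfb)\nule f(\bfb)$, and the equality characterization via Lemma~\ref{multmon}. One point worth tightening in the converse of the equality clause: if $h_i$ strictly dominates at $\bfa'$ and $h_j$ at $\bfb'$ on a subpath, then the sum $f((\bfa')^m(\bfb')^{1-m})=\sum_k h_k(\bfa')^m h_k(\bfb')^{1-m}$ has every term strictly $\nu$-below $h_i(\bfa')^m h_j(\bfb')^{1-m}=f(\bfa')^m f(\bfb')^{1-m}$ (for $k=i$ because $h_i(\bfb')<_\nu h_j(\bfb')$, for $k=j$ because $h_j(\bfa')<_\nu h_i(\bfa')$, and for any other $k$ because $h_k$ is dominated at both ends), which is the precise reason strict inequality holds for $0<m<1$. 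You flag this yourself as the delicate step; spelled out this way it is airtight. Note also that if the dominant monomials at the two ends are $\nu$-equivalent there (a tie at an endpoint), Lemma~\ref{multmon}(iii) forces $\nu$-domination by one of them along the whole path, so this degenerate situation falls under the ``single monomial on the path'' side; it is worth being explicit that ``single monomial on that path'' is read in the $\nu$-sense, as otherwise tie points cause layer discrepancies between $f$ and the monomial.
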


\begin{rem}\label{motiv} Suppose $F$ is 1-divisibly closed. We would like to say that a
polynomial $f \in \tR$ cannot have two different dominating
tangible monomials on different points in the same  component.
Suppose this is false; i.e., $f(\bfa) = h_1(\bfa)$ and $f(\bfb) =
h_2(\bfb)$ for some $\bfa$ and $\bfb$ in a convex set. We would
have a contradiction if~$f$ takes a ghost value somewhere on the
path $\gm_{\bfa, \bfb}$ between $\bfa$ and $\bfb$, and we can find
this in principle by solving the equation
$$h_1(\bfa)^t h_1(\bfb)^{1-t} = h_2(\bfa)^t
h_2(\bfb)^{1-t},$$ or
$$\bigg(\frac{h_1(\bfa)h_2(\bfb)}{ h_2(\bfa)h_1(\bfb)}\bigg)^t =
\frac{h_2(\bfb)}{h_1(\bfb)},$$ which we could solve (for $t$) by
means of logarithms. This seems to entails an extra hypothesis
that $\tT$ is closed under taking logarithms, but in fact this
hypothesis can be removed, again by Remark~\ref{Modcomp}.\end{rem}

\section{Layered tropical geometry}

We continue to assume that $F$ is a layered 1-\semifield0, and
$\tR = \Pol(S,F).$ One of our main overall research objectives is
to connect tropical geometry to the algebraic structure of $\tR$.
The picture was painted in broad categorical strokes in
\cite{IzhakianKnebuschRowen2010Categories}, but here we only
consider the ideal structure. To get started, we need a
Zariski-type correspondence between algebraic varieties and ideals
of $\tR $. The following basic definition is taken from
\cite{IzhakianKnebuschRowen2009Refined}:

\begin{defn} 
The \textbf{layering map} of a function $f\in \FunSF$ is the map
$\vmap_f: \tSS\to L$ given by $$\vmap_f(\bfa) := s(f(\bfa)),
\qquad \bfa \in S.$$
\end{defn}

We write $\vmap_f \leq \vmap_g$ if $\vmap_f(\bfa) \leq
\vmap_g(\bfa)$ for  every $\bfa \in S.$

\subsection{Corner ideals and corner loci}\label{tropvar}

Customarily, given a polynomial, one takes its zero set. Here is
the analogous layered idea.

\begin{defn}\label{rootlev}  Suppose  $f
= \sum h_i \in \mcR$ is the decomposition of a polynomial. The
\textbf{corner support $\csupp_\bfa(f)$ of $f$ at} $\bfa$ is the
set
 $$ \csupp_\bfa(f) :=  \{ h_i \in \supp(f): f(\bfa) \nucong h_i(\bfa)\}.$$ We write $|\csupp_\bfa(f)|$
 for the order of $\csupp_\bfa(f)$.

The \textbf{corner locus} $\tZ_{\corn}(f)$ of a polynomial $f \in
\tR$ is $$\tZ_\corn(f) := \{ \bfa \in \tSS : |\csupp _\bfa (f)|\ge
2 \} .  $$ The \textbf{corner locus} $\tZ_{\corn}({I})$ of a
subset $ I \subset \tR$ is $ \bigcap_{f\in I}\tZ_\corn(f)$. Any
such corner locus will also be called an (affine) \textbf{corner
variety}.  The elements of the corner locus are called
\textbf{corner roots}.

\end{defn}


 For example, $\la _1^{5/3} + 7$ has the corner root
$4.2.$
\begin{rem}  The tangibly spanned  binomial $\la_1 ^{i_1}\cdots \la _n^{{i_n}}+ \al$
has the corner locus $$\bigg\{ (a_1, \dots, a_n) : \prod _{k=1}^n
a_k^{i_k }  \nucong \al\bigg\},$$\end{rem}
 The  corner locus defines much of the affine layered
 geometry, as described in \cite{IzhakianKnebuschRowen2009Refined}
and \cite{IzhakianKnebuschRowen2010Categories}.

%
%

\begin{lem}\label{csup2} There are three possibilities for $\csupp _\bfa
(f+g)$: Either  $\csupp _{\bfa} (f)$, $\csupp _{\bfa} (g)$, or a
set of monomials of $f+g$ whose values are $\nu$-equivalent  at
$\bfa$ to
the values of the monomials of $f$ corresponding to  $\csupp _{\bfa} (f)$. 
 \end{lem}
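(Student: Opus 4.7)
The plan is to analyze the value $(f+g)(\bfa) = f(\bfa) + g(\bfa)$ using the trichotomy given by the tropical addition rule, and to track which monomials in the decomposition of $f+g$ achieve $\nu$-equivalence to that value.

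First I would fix decompositions $f = \sum_i h_i$ and $g = \sum_j h'_j$, and form a decomposition of $f+g$ by grouping the support-equivalent pairs $h_i,h'_j$ (replacing such a pair by its sum, which has the same support class) and leaving the remaining monomials untouched, as permitted by the discussion after Definition~\ref{decomp}. Every monomial appearing in the decomposition of $f+g$ is therefore either some $h_i$, some $h'_j$, or a sum $h_i + h'_j$ of two support-equivalent monomials; in the last case its value at any $\bfa$ is $\nu$-equivalent to $h_i(\bfa) \nucong h'_j(\bfa)$.

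Next I would split into three cases according to how $f(\bfa)$ and $g(\bfa)$ compare in $\nu$-order. If $f(\bfa) >_\nu g(\bfa)$, then by Lemma~\ref{dom1} (applied to $f$ dominating $g$ at the point $\bfa$) no monomial of $g$ that is not support-equivalent to one of $f$ can reach the value $f(\bfa)$; and any $h_i$ with $h_i(\bfa) \nucong f(\bfa)$ survives as (or is absorbed into) a monomial of $f+g$ whose value at $\bfa$ is still $\nu$-equivalent to $f(\bfa)$. This yields $\csupp_\bfa(f+g) = \csupp_\bfa(f)$. The symmetric argument gives $\csupp_\bfa(f+g) = \csupp_\bfa(g)$ when $g(\bfa) >_\nu f(\bfa)$.

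For the remaining case $f(\bfa) \nucong g(\bfa)$, the value $(f+g)(\bfa)$ is again $\nu$-equivalent to $f(\bfa)$ (only the layer can differ, via Equation~\eqref{14}). The monomials of the decomposition of $f+g$ whose values are $\nu$-equivalent to $(f+g)(\bfa)$ are precisely those coming from monomials in $\csupp_\bfa(f)$ (equivalently in $\csupp_\bfa(g)$, possibly amalgamated when support-equivalent); in particular their values at $\bfa$ are $\nu$-equivalent to the values of the monomials of $f$ in $\csupp_\bfa(f)$, which is the third alternative in the statement. The only subtle point, and the main thing to verify carefully, is that after the grouping no monomial gets ``promoted'' into the corner support spuriously, which is handled by Remark~\ref{suppsum} together with the fact that the support of a merged pair is the common support class of its two constituents.
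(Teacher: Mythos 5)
Your proof follows the same route as the paper's: a case analysis on the $\nu$-trichotomy $f(\bfa) \gnu g(\bfa)$, $g(\bfa) \gnu f(\bfa)$, $f(\bfa) \nucong g(\bfa)$, with some additional bookkeeping about merging support-equivalent monomials that the paper leaves implicit. One small correction: Lemma~\ref{dom1} requires $f$ to dominate $g$ as a function on all of $S$, so it does not apply to dominance at a single point $\bfa$; fortunately the fact you need there---that when $g(\bfa) <_\nu f(\bfa)$ no monomial $h'_j$ of $g$ can have $h'_j(\bfa) \nucong f(\bfa)$---is immediate from $h'_j(\bfa) \nule g(\bfa) <_\nu f(\bfa)$ and needs no lemma at all.
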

\begin{proof} There are three possible cases:

 \begin{itemize}
 \item $f(\bfa)>_\nu g(\bfa).$ Then $\csupp _\bfa (f+g)  =  \csupp _\bfa (f) ,$ so $|\csupp _\bfa (f+g)| \ge
 2$.
 \pSkip

\item $f(\bfa)<_\nu g(\bfa).$ Then $ \csupp _\bfa (f+g)  =  \csupp
_\bfa (g)$, so $|\csupp _\bfa (f+g)|\ge
 2$. \pSkip

 \item $f(\bfa)\nucong g(\bfa).$ Then $ (f+g)(\bfa )\nucong f(\bfa ) \nucong  g(\bfa ).$ 
\end{itemize}
  \end{proof}

 \begin{lem}\label{csup10}
$|\csupp _{\bfa} (f+g)|\ge
 2$ iff $|\csupp _\bfa (f^k+g^k)| \ge 2.$
\end{lem}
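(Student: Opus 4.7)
The plan is to reduce to a single-polynomial sub-claim and then invoke the trichotomy of Lemma~\ref{csup2}.

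The key sub-claim is: for any polynomial $h \in \tR$ and any $k \in \Net$,
$$|\csupp_\bfa(h)| \geq 2 \iff |\csupp_\bfa(h^k)| \geq 2.$$
To prove it, expand $h^k$ by distributivity from a decomposition $h = \sum_i h_i$; the resulting terms are $k$-fold products $h_{i_1}\cdots h_{i_k}$, each evaluating at $\bfa$ to $\prod_j h_{i_j}(\bfa)$. Such a product is $\nucong h(\bfa)^k \nucong h^k(\bfa)$ exactly when every factor lies in $\csupp_\bfa(h)$; otherwise some factor is strictly $\nu$-smaller than $h(\bfa)$, so the product is strictly $\nu$-smaller than $h^k(\bfa)$. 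Hence the corner support classes of $h^k$ at $\bfa$ are precisely the support classes of $k$-fold products of elements of $\csupp_\bfa(h)$. Since the assignment $c\Lm^\bfi \mapsto c^k \Lm^{k\bfi}$ is injective on support classes, a singleton $\csupp_\bfa(h)$ yields a singleton $\csupp_\bfa(h^k)$, whereas two distinct classes in $\csupp_\bfa(h)$ produce at least two distinct classes in $\csupp_\bfa(h^k)$.

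For the main statement, apply Lemma~\ref{csup2}. If $f(\bfa) >_\nu g(\bfa)$ (the case $f(\bfa) <_\nu g(\bfa)$ is symmetric), then $k$-th powers preserve the $\nu$-order, so $f^k(\bfa) >_\nu g^k(\bfa)$; consequently $\csupp_\bfa(f+g) = \csupp_\bfa(f)$ and $\csupp_\bfa(f^k+g^k) = \csupp_\bfa(f^k)$, and the sub-claim applied to the dominant polynomial closes this case. If instead $f(\bfa) \nucong g(\bfa)$, one first checks the union formula $\csupp_\bfa(f+g) = \csupp_\bfa(f) \cup \csupp_\bfa(g)$: any support class dominating in $f+g$ at $\bfa$ must come from a monomial whose value at $\bfa$ is dominant in $f$ or in $g$, because a combined coefficient $c_f + c_g$ has $\nu$-class equal to the $\nu$-maximum of the two contributions. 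The same argument applied to $f^k, g^k$ (which also satisfy $f^k(\bfa) \nucong g^k(\bfa)$) gives $\csupp_\bfa(f^k+g^k) = \csupp_\bfa(f^k) \cup \csupp_\bfa(g^k)$; combining with the sub-claim and the injectivity of $h \mapsto h^k$ on support classes shows these two unions have cardinality $\geq 2$ simultaneously.

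The main technical care goes into the union formula in the equal-$\nu$ case, since a support class shared between $f$ and $g$ has its coefficients combined in $f+g$, potentially obscuring which summand it was dominant in. The observation that the combined coefficient's $\nu$-class is simply the $\nu$-max of the two contributions resolves this: the class can only dominate in $f+g$ when it was already dominant in at least one of $f,g$. The rest of the argument is direct bookkeeping with the definitions.
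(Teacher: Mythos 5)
Your proof is correct and follows the same skeleton as the paper's, which simply declares the unequal-$\nu$ cases ``clear'' and invokes Lemma~\ref{csup2} for the case $f(\bfa)\nucong g(\bfa)$. The content you add---the sub-claim that $|\csupp_\bfa(h)|\ge 2 \iff |\csupp_\bfa(h^k)|\ge 2$, and the union formula $\csupp_\bfa(f+g)=\csupp_\bfa(f)\cup\csupp_\bfa(g)$ in the equal-$\nu$ case---is precisely what the paper leaves implicit, so this is an expanded version of the same argument rather than a different route.
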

\begin{proof} This is clear unless $f(\bfa) \nucong g(\bfa),$ in which
 case $f^k(\bfa) \nucong g^k(\bfa),$ and thus we conclude with
 Lemma~\ref{csup2}.
  \end{proof}

   \begin{lem}\label{csup3} For any $k \in \Net,$ $\tZ_{\corn}(f+g)= \tZ_{\corn}((f+g)^k).$

 \end{lem}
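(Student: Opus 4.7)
The plan is to prove the stronger pointwise equivalence: for every $\bfa \in S$, $|\csupp_\bfa((f+g)^k)| \ge 2$ iff $|\csupp_\bfa(f+g)| \ge 2$. This immediately yields the lemma, since $\tZ_\corn(\cdot)$ is defined precisely by the condition $|\csupp_\bfa(\cdot)| \ge 2$.

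Setting $h := f+g$, I would fix a decomposition $h = h_1 + \cdots + h_r$ into monomials with pairwise disjoint supports. Expanding $h^k = \sum h_{i_1}\cdots h_{i_k}$, each summand is itself a monomial whose support is the sum of the supports of its $k$ factors. Collecting summands of equal support (which combine via layered addition) produces the canonical decomposition of $h^k$, indexed by the set of distinct such sums.

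Next I would fix $\bfa$, write $m := h(\bfa)$, and let $C := \csupp_\bfa(h) = \{i : h_i(\bfa) \nucong m\}$. For any product $h_{i_1}\cdots h_{i_k}$, the value $\prod_\ell h_{i_\ell}(\bfa)$ is bounded by $m^k$ in the $\nu$-order, with $\nu$-equality iff every factor is $\nu$-equivalent to $m$, i.e., iff all $i_\ell \in C$. Since layered addition never lowers the $\nu$-value of its summands, the combined monomial of support $\sum_\ell \supp(h_{i_\ell})$ is $\nu$-equivalent to $h^k(\bfa) = m^k$ exactly when at least one of its contributing products is. Hence $\csupp_\bfa(h^k)$ is in bijection with the set $\{ \sum_\ell \supp(h_{i_\ell}) : i_1, \ldots, i_k \in C \}$ of distinct support vectors.

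If $|C| = 1$, this set has a single element; if $|C| \ge 2$, then any two distinct $i, j \in C$ yield distinct sums $k \cdot \supp(h_i) \ne k \cdot \supp(h_j)$, giving at least two elements. Thus $|\csupp_\bfa(h^k)| \ge 2$ iff $|\csupp_\bfa(h)| \ge 2$, as required. The main (routine but necessary) subtlety lies in the combining step: one must ensure that in the layered decomposition of $h^k$, any support vector arising from at least one product with all $i_\ell \in C$ indeed contributes a monomial $\nu$-equivalent to $m^k$. This follows from Remark~\ref{suppsum} together with the monotonicity of layered addition in the $\nu$-order.
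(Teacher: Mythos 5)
Your proof is correct, but it follows a genuinely different route from the paper. The paper dispatches Lemma~\ref{csup3} as ``immediate from Lemma~\ref{csup10},'' which is the pointwise statement comparing $|\csupp_\bfa(f+g)|$ with $|\csupp_\bfa(f^k+g^k)|$ and is in turn reduced, via the case split in Lemma~\ref{csup2}, to whether $f(\bfa)\nucong g(\bfa)$. That is, the paper exploits the given presentation of the polynomial as a sum of two pieces and then implicitly identifies $(f+g)^k$ with $f^k+g^k$ for corner-locus purposes (these two are $\nu$-equivalent but not equal as functions, since their layers can differ and $(f+g)^k$ has cross-term monomials absent from $f^k+g^k$). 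You instead expand $h^k$ directly in terms of $k$-fold products of the monomials of $h=f+g$, identify $\csupp_\bfa(h^k)$ with products of dominant monomials, and count distinct support vectors. This is more elementary and self-contained (it uses neither Lemma~\ref{csup2} nor Lemma~\ref{csup10}), it does not depend on a particular splitting of $h$ into two summands, and it makes explicit the combinatorics that the paper's one-line citation glosses over.

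One point to tighten: you call the expansion of $h^k$ obtained by collecting products of equal support the ``canonical decomposition,'' but a decomposition in the sense of Definition~\ref{decomp} excludes inessential monomials, and some collected products may indeed be inessential in $h^k$. This does not break the argument, but you should say why. For the direction $|C|\ge 2 \Rightarrow |\csupp_\bfa(h^k)|\ge 2$, note that for $i\in C$ the monomial $h_i$ is essential in $h$, so it strictly $\nu$-dominates on an open set, whence the collected monomial of support $k\cdot\supp(h_i)$ strictly $\nu$-dominates $h^k$ on that same set and is therefore essential; so $h_i^k$ and $h_j^k$ are two essential monomials tying at $\bfa$. For the converse, every product with some index outside $C$ is strictly $\nu$-smaller than $m^k$ at $\bfa$, so after collecting (and a fortiori after discarding inessentials) exactly one monomial attains $m^k$ at $\bfa$. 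With these remarks included the proof is complete.
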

 \begin{proof} Immediate from Lemma~\ref{csup10}.
  \end{proof}

\begin{lem}\label{csup11}
If $|\csupp _{\bfa} (f)|=|\csupp _{\bfa} (g)|=
 1,$ then $|\csupp _\bfa (fg)|=1$.
\end{lem}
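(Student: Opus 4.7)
The plan is to extract the unique dominating monomial of each factor and show that its image under multiplication is the unique dominating monomial of $fg$ at $\bfa$.

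Write decompositions $f = \sum_i h_i$ and $g = \sum_j h'_j$ into monomials. The hypothesis $|\csupp_\bfa(f)|=1$ gives a distinguished index $i_0$ such that $f(\bfa)\nucong h_{i_0}(\bfa)$ and $h_i(\bfa) <_\nu h_{i_0}(\bfa)$ for all $i\ne i_0$; symmetrically, define $j_0$ for $g$. Since the $\nu$-order is compatible with multiplication in the cancellative monoid $\tT = R_1$, I would verify directly that for any $(i,j)\ne(i_0,j_0)$,
\[
h_i(\bfa)\,h'_j(\bfa) \;<_\nu\; h_{i_0}(\bfa)\,h'_{j_0}(\bfa),
\]
by splitting into the cases $i\ne i_0$ and $j\ne j_0$ and using the respective strict inequality on one side with $\le_\nu$ on the other.

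Next, expand $fg = \sum_{i,j} h_i h'_j$ and collect support-equivalent products to obtain an honest decomposition $fg = \sum_q \tilde h_q$, where $\tilde h_q$ combines all products $h_i h'_j$ whose support equals $q$. Set $q_0 := \supp(h_{i_0} h'_{j_0})$. The strict-domination inequality above forces
\[
\tilde h_{q_0}(\bfa)\;\nucong\; h_{i_0}(\bfa)\,h'_{j_0}(\bfa)\;\nucong\;(fg)(\bfa),
\]
while for every $q\ne q_0$ every summand contributing to $\tilde h_q$ is strictly $\nu$-dominated by $h_{i_0}(\bfa)h'_{j_0}(\bfa)$, so $\tilde h_q(\bfa) <_\nu (fg)(\bfa)$. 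Thus $\csupp_\bfa(fg) = \{\tilde h_{q_0}\}$, which has order one.

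The only mildly subtle point is handling the possibility that two distinct products $h_i h'_j$ in the naive expansion of $fg$ share the same support and must be merged into a single term of the decomposition; but since we are tracking only $\nu$-equivalence classes at $\bfa$, the strict inequality above ensures that merging cannot create a second dominant monomial, so the step is routine.
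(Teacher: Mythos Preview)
Your proposal is correct and follows essentially the same approach as the paper. The paper's proof is a one-line sketch asserting that the product of the two unique dominant monomials is ``clearly'' the single dominant monomial of $fg$ at $\bfa$; you have simply written out the details behind that word, including the compatibility of the $\nu$-order with multiplication and the bookkeeping needed when distinct products $h_i h'_j$ share the same support in the expansion of $fg$.
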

\begin{proof} The hypothesis says that $f$ and $g$ both have a single dominant
monomial at $\bfa$, whose product is clearly the single dominant
monomial of $fg$ at $\bfa$.
  \end{proof}

We also quote a relevant result  from
\cite{IzhakianKnebuschRowen2009Refined}.

\begin{lem}[{\cite[Lemma~6.28]{IzhakianKnebuschRowen2009Refined}}]\label{csup1}
If $|\csupp _{\bfa} (f)|\ge
 2,$ then $|\csupp _\bfa (fg)|\ge
 2$ for all $g \in \mathcal R.$
\end{lem}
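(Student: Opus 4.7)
The plan is to prove this via a direct combinatorial argument on decompositions of $f$ and $g$ into monomials. First, I would fix a decomposition $f = \sum_i h_i$ and choose two distinct monomials $h_{i_1}, h_{i_2}$ with $h_{i_1}(\bfa) \nucong h_{i_2}(\bfa) \nucong f(\bfa)$, which exist by hypothesis since $|\csupp_\bfa(f)| \ge 2$. Next I would fix any decomposition $g = \sum_j h'_j$ and pick some $h'_{j_0}$ that is dominant at $\bfa$, i.e., $h'_{j_0}(\bfa) \nucong g(\bfa)$ (such a monomial must exist because $g(\bfa)$ agrees in $\nu$-value with at least one of its summands).

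The key observation is that the product $fg$ has a decomposition into monomials obtained from the products $h_i h'_j$, after combining support-equivalent terms. Since $\tT$ is a cancellative monoid, the supports multiply injectively from each factor, so $h_{i_1} h'_{j_0}$ and $h_{i_2} h'_{j_0}$ lie in distinct support classes of $fg$. Each satisfies
\[
(h_{i_k} h'_{j_0})(\bfa) \;=\; h_{i_k}(\bfa)\, h'_{j_0}(\bfa) \;\nucong\; f(\bfa) g(\bfa) \;=\; (fg)(\bfa), \qquad k = 1,2.
\]
So before combining, both support classes contain a monomial whose value at $\bfa$ is $\nu$-equivalent to $(fg)(\bfa)$.

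The mild subtlety, and what I would emphasize as the one genuine step, is checking that combining support-equivalent products does not destroy this dominance. Within a single support class, all summands agree on the monomial part $\Lm^\bfi$ at $\bfa$, so their $\nu$-values at $\bfa$ are determined by the $\nu$-values of their coefficients. Summing (using \eqref{14}) yields a single monomial whose value at $\bfa$ is $\nu$-equivalent to the largest of the summand values — in particular, $\nu$-equivalent to $(fg)(\bfa)$, since $h_{i_k} h'_{j_0}$ is itself one of the summands and already attains the dominant $\nu$-value at $\bfa$. Thus the combined representative of each of the two distinct support classes lies in $\csupp_\bfa(fg)$, giving $|\csupp_\bfa(fg)| \ge 2$.

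The main potential obstacle is bookkeeping about support-equivalent products and confirming that no ``cancellation'' occurs; this is safe because tropical addition is idempotent-like (it never decreases the $\nu$-value of a dominant summand), so the argument above is self-contained once the decomposition of $fg$ is set up. No appeal to denseness of $F$ or 1-divisibility is needed here; the result is purely about the combinatorics of monomial products.
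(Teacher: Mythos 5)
The paper does not prove this lemma; it is simply quoted from \cite[Lemma~6.28]{IzhakianKnebuschRowen2009Refined}, so there is nothing to compare against directly. Turning to your proposal: the set-up is reasonable, but there is a genuine gap at exactly the step you flag as ``the one genuine step.'' After combining the support-equivalent products $h_ih'_j$ you obtain a monomial $m_q$ with $m_q(\bfa)\nucong (fg)(\bfa)$, and you conclude that $m_q$ ``lies in $\csupp_\bfa(fg)$.'' But $\csupp_\bfa(fg)$ is, by Definition~\ref{rootlev}, taken over $\supp(fg)$, which in turn refers to a \emph{decomposition} of $fg$; and by Definition~\ref{decomp} a decomposition is required to contain no inessential monomials. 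Your $m_q$ can very well be inessential, in which case it is discarded from the decomposition of $fg$ and does not belong to $\supp(fg)$ at all, so it cannot serve as one of your two witnesses.

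This actually happens in the standard supertropical case $L=\{1,\infty\}$. Take one indeterminate, $f=g=\la+\fone$ and $\bfa=\fone$. Then $h_{i_1}=\la$, $h_{i_2}=\fone$ are both dominant at $\bfa$, and say you pick $h'_{j_0}=\la$. The two products are $\la^2$ and $\la$; the support class of $\la$ collects $\la\cdot\fone + \fone\cdot\la$, giving the combined monomial $\la^\nu$. But $\la^\nu$ is inessential in $\la^2+\la^\nu+\fone$ (one checks $\la^2+\fone$ is the same function over the standard supertropical semiring), so the decomposition of $fg$ is $\la^2+\fone$ and $\la^\nu\notin\supp(fg)$. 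Your chosen pair of witnesses collapses to one. The lemma is of course still true here (because $\fone\in\csupp_\bfa(fg)$ as well), but the argument as written does not establish it: you never rule out that the $m_q$ you produce can be thrown away, and you never produce a replacement witness when it is. Note that your closing remark that ``tropical addition\ldots never decreases the $\nu$-value of a dominant summand'' is true but addresses the wrong worry: the issue is not cancellation of $\nu$-values but whether the combined monomial survives to the canonical decomposition of $fg$. Patching this requires either (a) an argument that, whenever a dominant combined monomial is inessential, another dominant monomial must remain in the decomposition with a \emph{distinct} support class from $m_{q'}$, or (b) an appeal to the geometric/density argument you explicitly claimed to avoid (if $\bfa$ were interior to a single component of $fg$, one monomial would strictly dominate on a neighborhood, contradicting that the dominant monomial of $f$ changes across $\bfa$). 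Either route would close the gap, but the proposal as written does not.
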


\begin{prop}\label{noroot}
 When $\tT := F_1$ is 1-divisibly closed, each polynomial $f$ with $|\supp(f)| \ge 2$ has a corner
 root, and any corner root of $f$ is a corner root of some binomial of $f$.
\end{prop}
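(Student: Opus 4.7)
The plan is to address the two assertions separately, with the second being nearly a definitional observation and the first requiring a path-and-crossover argument.

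For the second part, let $\bfc$ be a corner root of $f$. By Definition~\ref{rootlev}, $|\csupp_\bfc(f)| \ge 2$, so there exist distinct monomials $h_i, h_j$ appearing in a decomposition of $f$ with $h_i(\bfc) \nucong h_j(\bfc) \nucong f(\bfc)$. Forming the binomial $g := h_i + h_j$ (a binomial of $f$ in the sense of the definition preceding Definition~\ref{rootlev}), one has $g(\bfc) \nucong h_i(\bfc) \nucong h_j(\bfc)$, hence $|\csupp_\bfc(g)| = 2$ and $\bfc$ is a corner root of this binomial.

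For the first assertion, fix a decomposition $f = \sum_{i=1}^m h_i$ into essential monomials (Corollary~\ref{uniquede}), with $m \geq 2$. Since $h_1, h_2$ are essential, pick $\bfa, \bfb \in S$ at which, respectively, $h_1$ strictly $\nu$-dominates every other $h_i$ and $h_2$ strictly $\nu$-dominates every other $h_i$. Travel along the path $\gm_{\bfa,\bfb}$ of Definition~\ref{path}: a direct computation shows that for $\bfc_t = \bfa^t \bfb^{1-t}$ each map $t \mapsto h_i(\bfc_t)$ is ``tropically affine'' in $t$ (in logarithmic terms), so $t \mapsto f(\bfc_t)$ is a pointwise $\nu$-maximum of such affine functions. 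Because the argmax set equals $\{h_1\}$ at $t=1$ and $\{h_2\}$ at $t=0$, it must change at some breakpoint $t_0 \in (0,1)$, at which two (or more) monomials tie for the $\nu$-maximum. Iterating the argument if some $h_k$ with $k \ne 1,2$ takes over, we may assume that at the breakpoint some pair of monomials from the decomposition both coincide with $f(\bfc_{t_0})$, giving $|\csupp_{\bfc_{t_0}}(f)| \ge 2$.

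The main obstacle is that the breakpoint $t_0$, determined by a log-linear equation in $F_1$, is in general irrational, so $\bfc_{t_0} = \bfa^{t_0}\bfb^{1-t_0}$ is not automatically a point of $F^{(n)}$ even assuming 1-divisibility (which delivers only rational powers). I would resolve this by invoking Remark~\ref{Modcomp}: by model-completeness of the theory of divisibly closed ordered abelian groups, the existential sentence ``$f$ has a corner root'' is preserved under elementary embeddings, so it is enough to embed $F$ into an elementary extension $F'$ in which arbitrary real exponents of elements of $F_1$ are available; the path-and-crossover construction then furnishes a corner root $\bfc_{t_0} \in (F')^{(n)}$, and elementary equivalence transfers the existential statement back to $F$. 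This parallels the reduction to the dense case used in the proof of Theorem~\ref{uniquedec}.
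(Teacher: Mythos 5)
Your proof takes essentially the same route as the paper's: pick points $\bfa,\bfb$ at which two distinct essential monomials strictly dominate, walk along the path $\gm_{\bfa,\bfb}$, and locate the first breakpoint where the dominant monomial changes. The only stylistic difference is that you are more explicit about the issue that the breakpoint parameter $t_0$ need not be rational; the paper's proof of this proposition glosses over this with ``one can now easily solve \dots\ since $\tT$ is divisibly closed,'' while the paper's Remark~\ref{motiv} performs the same computation and acknowledges that solving for $t$ really wants logarithms, which it discharges via Remark~\ref{Modcomp} exactly as you do. So your explicit invocation of model-completeness is faithful to the paper's own intended resolution, and the rest (including the ``iterate if a third monomial takes over'' reduction via Lemma~\ref{multmon}, and the one-line treatment of the second assertion) mirrors the paper's argument step for step.
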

\begin{proof} Suppose  $h_1 = \a \la_1^{i_1} \cdots \la _n^{i_n}$ dominates $f$ at $\bfa$ and
$h_2  = \beta \la_1^{j_1} \cdots \la _n^{j_n}$ dominates $f$ at
$\bfb $. Consider the path $\gm_{\bfa, \bfb}$. By
Lemma~\ref{multmon}, there can only be finitely many values of $t$
(notation as in Definition~\ref{path}) at which there is a change
of the dominant monomial of the path. Taking the smallest such
$t$, one can now easily solve $\a \la_1^{i_1} \cdots \la _n^{i_n}
= \beta \la_1^{j_1} \cdots \la _n^{j_n}$ to get the corner root,
since $\tT$ is divisibly closed. (Perhaps one has changed the
dominant monomial, but again, in view of Lemma~\ref{multmon}, this
process must terminate after a finite number of steps.)

The last assertion is obvious, by definition of corner root.
\end{proof}

Thus, binomials play a key role in the study of corner roots. 
On the other hand, we encounter some peculiar
corner loci.

\begin{example}\label{infgen0} $ $
\begin{enumerate}
\item $f_k =  \la_1^k + \la_2 + 0$ for $k \in \Net  .$ We consider
$\bfa = (a_1,a_2)$ with  $a_1, a_2 \in \tT.$
$$\vmap_{f_k}(\bfa) = \begin{cases}   3 \text{ for } a_1 = a_2 = 0; \\ 2 \text{ for } a_1 = 0 >
a_2 \quad \text{ or }\quad  a_2 = 0 >  a_1 \quad \text{ or }\quad
a_1 ^k =  a_2 > 0; \\ 1 \text{ otherwise. }
\end {cases}$$ \item ${\tI } =\{ f_k : k \in \Net \}.$ Now $$\vmap_{\tI}(\bfa) = \begin{cases}   3 \text{ for } a_1 = a_2 = 0;  \\ 2 \text{ for } a_1 = 0 >  a_2
\quad \text{ or }\quad  a_2 = 0 >  a_1 ; \\ 1 \text{ otherwise. }
\end {cases}$$
\end{enumerate}

 $\tZ_{\corn}({\tI }) = \{\bfa :  a_1 = 0 \ge  a_2
$ or $ a_2 = 0 \ge  a_1 \}. $
\end{example}

\begin{defn}\label{geomid2}  Given a subset $Z \subset S$,   define   $$ \tI_{\corn}(Z):= \{ f \in
\tR : |\csupp _\bfa (f)|\ge
 2, \ \forall \bfa \in Z\} .$$
 A  \textbf{corner ideal} is an
ideal of $\tR$ of the form $\tI _{\corn} (Z)$ for a suitable
subset
 $Z \subseteq \tSS$.
\end{defn}

\begin{rem}  $\tI_{\corn} (Z) =  \tI_{\corn} (\tZ
_{\corn}({ \tI _{\corn} (Z)})),$ so every \corner\ ideal arises
from a corner variety. Likewise, every corner variety arises
from a  \corner\ ideal.
\end{rem}

\begin{defn}\label{tropid01} A \textbf{$\nu$-closed ideal} of $\tR$ is a
\semiring0
 ideal $\tI $ satisfying the property that if
$f = \sum f_i  \in \tI $ and $g = \sum g_i$ are decompositions
into monomials with    $g_i \nucong f_i$ for each $i$, then $g \in \tI
$.

A \textbf{\lclo-closed ideal} of $\tR$ is a \semiring0
 ideal $\tI $ satisfying the weaker property that if
$f = \sum f_i  \in \tI $ and $g = \sum g_i$ are decompositions
into monomials with  $g_i \lmodLnu f_i$ for each $i$, then $g \in
\tI $.

\end{defn}


\begin{lem} Any corner ideal is $\nu$-closed. \end{lem}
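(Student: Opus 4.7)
The plan is to unwind the definitions and verify the corner-ideal condition pointwise. Let $\tI = \tI_{\corn}(Z)$ be a corner ideal, take $f = \sum_i f_i \in \tI$ written as a decomposition into monomials, and let $g = \sum_i g_i$ be another decomposition with $g_i \nucong f_i$ for each $i$. The goal is to show that $g$ satisfies $|\csupp_\bfa(g)| \ge 2$ for every $\bfa \in Z$, which will give $g \in \tI_{\corn}(Z)$.

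Fix $\bfa \in Z$. The hypothesis $g_i \nucong f_i$ (as functions) asserts in particular that $g_i(\bfa) \nucong f_i(\bfa)$ for every $i$. Because $\nu$-equivalence is compatible with the $\nu$-order and with the ``max-determined'' sum in a layered domain, passing to the $\nu$-maxima over $i$ yields $g(\bfa) \nucong f(\bfa)$. Consequently, for each index $i$ one has $g_i(\bfa) \nucong g(\bfa)$ if and only if $f_i(\bfa) \nucong f(\bfa)$, so the set of indices contributing to the corner support is identical for $f$ and for $g$ at the point $\bfa$.

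Finally, since $g_i$ and $f_i$ are $\nu$-equivalent monomials they have the same support (they differ only by a coefficient from the same $\nu$-equivalence class); in particular the $g_i$'s form a legitimate decomposition in the sense of Definition~\ref{decomp} (no two of them are support-equivalent). Hence the monomials contributing to $\csupp_\bfa(g)$ are counted distinctly, and $|\csupp_\bfa(g)| = |\csupp_\bfa(f)| \ge 2$. Thus $g \in \tI_{\corn}(Z)$, which is the $\nu$-closure condition.

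No substantive obstacle is expected: the statement is essentially a direct translation of definitions, reflecting the fact that the notions of dominance and corner support depend only on the $\nu$-equivalence classes of the coefficients of the monomials appearing in a decomposition. The only minor point to check is that support-equivalence is preserved under replacement by $\nu$-equivalent coefficients, so that the new sum $g$ retains a genuine decomposition.
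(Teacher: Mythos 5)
Your proof is correct and is essentially a careful expansion of the paper's one-line proof (``The corner locus only relies on the $\nu$-values of the monomials''); you unwind the same idea pointwise, checking that $\nu$-equivalent monomials yield the same corner support at each $\bfa\in Z$.
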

\begin{proof} The corner locus
 only relies on the $\nu$-values of the monomials.\end{proof}

\begin{defn} Given any subset $A \subset \mcR,$ we define
$\root m \of  A$ to be the set $$ \root m \of  A := \{ f \in \mcR
\ds : f^m \in A\}.$$ \end{defn}
\begin{lem} If $A$ is
 a $\nu$-closed (resp.\lclo-closed) ideal of $\mcR$, then $\root m \of  A
$ is also a \lclo-closed ideal of $\mcR$.\end{lem}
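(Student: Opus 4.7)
The plan is to verify: (i) $\root m \of A$ is a \semiring0 ideal, and (ii) it is \lclo-closed. The case $m=1$ is trivial, so I assume $m \ge 2$.

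Closure under scalar multiplication is immediate: $(rf)^m = r^m f^m \in A$ whenever $f^m \in A$. For closure under addition, given $f,g \in \root m \of A$, expand
$$(f+g)^m \;=\; f^m + g^m \;+\; \sum_{0<k<m} \binom{m}{k}\, f^k g^{m-k}$$
by distributivity in the commutative \semiring0. Since $f^m,g^m \in A$ and $A$ is an ideal, $f^m + g^m \in A$. For each middle summand, $\binom{m}{k} = 1+1+\cdots+1 \ge 2$ in $L$, so by Lemma~\ref{twosurp} (iterated), $\binom{m}{k}\, f^k g^{m-k}$ is $\lv(f^k g^{m-k})$-ghost. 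Hence $(f+g)^m$ is obtained from $f^m + g^m$ by adding ghost terms, so $(f+g)^m \lmodL f^m+g^m$; and since both sides agree pointwise on $\nu$-values (both are $\max(\nu f,\nu g)^m$), we actually have $(f+g)^m \lmodLnu f^m+g^m$. After grouping support-equivalent monomials so that the two decompositions align, the \lclo-closed property of $A$ (which follows from $\nu$-closedness, since $\lmodLnu$ forces $\nucong$) yields $(f+g)^m \in A$, i.e., $f+g \in \root m \of A$.

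For (ii), suppose $f = \sum f_i \in \root m \of A$ (so $f^m \in A$) and $g = \sum g_i$ with $g_i \lmodLnu f_i$ for every $i$. Write $g_i = f_i + c_i$ with $c_i$ an $\lv(f_i)$-ghost, and expand
$$g^m \;=\; \sum_{(i_1,\ldots,i_m)} g_{i_1}\cdots g_{i_m}, \qquad f^m \;=\; \sum_{(i_1,\ldots,i_m)} f_{i_1}\cdots f_{i_m}.$$
Distributive expansion of $g_{i_1}\cdots g_{i_m} = \prod_k(f_{i_k}+c_{i_k})$ produces $f_{i_1}\cdots f_{i_m}$ plus a sum of terms each containing at least one ghost factor $c_{i_k}$, each of which is therefore itself ghost; combined with $g_i \nucong f_i$, this gives $g_{i_1}\cdots g_{i_m} \lmodLnu f_{i_1}\cdots f_{i_m}$. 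Collecting support-equivalent monomials, the decomposition of $g^m$ matches that of $f^m$ monomial-by-monomial under $\lmodLnu$, and since $f^m \in A$ the \lclo-closed property of $A$ yields $g^m \in A$.

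The main obstacle is the monomial-level bookkeeping in (i): the binomial expansion of $(f+g)^m$ produces ``mixed'' monomials (for example $2\lambda_1\lambda_2$ in $(\lambda_1+\lambda_2)^2$) whose supports are absent from $f^m + g^m$, whereas Definition~\ref{tropid01} requires matched decompositions of equal length. Because these mixed monomials carry multinomial coefficients $\ge 2$ and are hence $\lv$-ghost, the plan is to absorb them into the existing monomials of $f^m + g^m$ via the $\lmodL$-increment clause, exploiting the $\nu$-dominance of the pure monomials at all non-critical points; this realignment of decompositions so that the \lclo-closed axiom literally applies is the technical crux.
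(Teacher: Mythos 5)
The paper's own proof is a one-line citation of Lemma~\ref{csup10} (the corner-support characterization $|\csupp_\bfa(f+g)|\ge 2 \Leftrightarrow |\csupp_\bfa(f^k+g^k)|\ge 2$) together with an external remark; it never touches the distributive expansion of $(f+g)^m$ or $g^m$. Your route is genuinely different, but it does not go through.

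The central difficulty you correctly flag in part~(i) is fatal, not merely a bookkeeping chore. Definition~\ref{tropid01} of a \lclo-closed ideal requires two \emph{matching} decompositions into monomials with $g_i \lmodLnu f_i$ termwise. But $(f+g)^m$ acquires mixed monomials (such as $\xl{(\la_1\la_2)}{2}$ in $(\la_1+\la_2)^2$) whose supports simply do not occur in $f^m+g^m$; these mixed terms are not inessential in the paper's sense (removing them changes the layer of the function on the corner locus), so they cannot be dropped, and they cannot be ``absorbed'' into a monomial of a different support while staying a monomial. So there is no pairing of decompositions to which the \lclo-closed axiom applies, and the proposed plan to realign them cannot succeed as stated. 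Your argument never reaches a point where the \lclo-closedness of $A$ actually fires.

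Part~(ii) has a second, unacknowledged gap at ``Collecting support-equivalent monomials, the decomposition of $g^m$ matches that of $f^m$ monomial-by-monomial under $\lmodLnu$.'' The relation $\lmodLnu$ is not stable under adding together support-equivalent terms: for coefficients $a_i \lmodLnu b_i$, the sum need not satisfy $a_1+a_2 \lmodLnu b_1+b_2$, because the definition of $\lmodL$ requires the difference $c$ to be an $s(b_1+b_2)$-ghost, and $s(b_1+b_2)$ grows while the layer excess of $a_1+a_2$ over $b_1+b_2$ does not grow proportionally. (Concretely, over $L=\Net$ one can pick a third monomial with a large layer that contributes to the same support in $f^m$; its untouched contribution inflates $s(b_1+b_2)$ without inflating the difference, breaking the ghost condition.) So the collected coefficients need not be $\lmodLnu$-related even when every individual multi-index summand is, and the invocation of $A$'s \lclo-closedness in part~(ii) is likewise unjustified.

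In short: the paper sidesteps all layer bookkeeping by working at the level of corner supports (pure $\nu$-data), whereas your expansion-based route forces you to track layers through a semiring binomial expansion, and it is exactly there -- at the two places where you try to invoke the decomposition-matching clause of the \lclo-closed definition -- that the argument breaks.
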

\begin{proof} This follows at once from Lemma~\ref{csup10} and \cite[Remark~5.2]{IzhakianKnebuschRowen2009Refined}.\end{proof}

\begin{defn} The  \textbf{radical} of
$A$ is defined as $$\sqrt A := \bigcup _{m \in \Net} \root m \of
A.$$ The ideal $A$ is called \textbf{radical} if $A = \sqrt A$.
\end{defn} (In
particular $\Fun(S, F_{>1})$ is itself a radical ideal of the
 function semiring $\FunSF,$ and $\Pol(S, F_{>1})$ is a radical ideal of the
polynomial semiring $\PolSF.$)

The following motivational observation shows why radical (and in
particular prime) ideals are important.

\begin{rem} $\tI _{\corn} (Z)$ is a radical $\nu$-closed ideal of
$\mcR,$
 by Lemma~\ref{csup10}.
 \end{rem}


\subsection{Tropicalized ideals}

Our next objective is to identify ideals of special geometric
significance. We start with the specific ideals arising in the
transition from classical algebraic geometry to tropical geometry,
and then move on to intrinsic properties of ideals in the layered
structure.

Given an integral domain $K$ (in the classical sense) with a
valuation $v:K\to \tG,$  one takes the uniform layered
1-\semifield0 $F: = \RR(L,\tT)$ of Example~\ref{exs}, with  $\tT :
= \tG = F_1$ the tangible elements of $F$, and realize $v$ as   $v
: K \to \tT.$ (More generally, $K$ could be a valued monoid,
cf.~\cite[Definition~4.1]{IzhakianKnebuschRowen2010Categories}.)
Thus, the given operation on the ordered Abelian group $\tG$ is
taken to be multiplication in $F$, whereas addition in~$F$ is
induced from the given order on $\tG$. This takes us from the
classical world to the supertropical world,
 and is explained in categorical terms in
 \cite[Definition~5.6]{IzhakianKnebuschRowen2011CategoriesII}.
This is a supertropical valuation, as described in \cite{IKR1}.


 The map $v$ extends  to the polynomial map  $\tlv : K[\Lm ] \to \tT[\Lm ]$ given by $\la
_i \mapsto \la_i.$ In turn,  $\tlv$ induces a map
 $\{\text{ideals of }K[\Lm ]\} \to \{\text{ideals of }v(K) [\Lm ]\}$.

\begin{defn} For any subset $X \subset K[\Lm ],$
$\tlv(  X) := \{ \tlv(x) : x \in X\} $ is called the
\textbf{tropicalization} of~$X$. An ideal $ \tI$ of $F[\Lm]$ is
called \textbf{tropicalized} if $\tI \cap \tT[\Lm]$ is the
{tropicalization} of an ideal of $K[\Lm ]$.
\end{defn}

This concept involves some subtle difficulties.

\begin{defn}\label{gen0} A polynomial
$f \in \tR$ is \textbf{generated} by a subset $Y \subset \tR$ if
 $f = \sum_i f_ih_i$ for suitable $h_i \in
Y$ and  $f_i \in \tR$. We write $\langle S \rangle$ for the ideal
generated by a set $S$.

\end{defn}

\begin{example}\label{gen11}
 $\la+2$ is   generated
by the set $Y = \{ \la + 1, \la +3\}$,  as seen via the
calculation
$$ \la + 2 =  (\la +1) + (-1)(\la +3). $$
More generally, $ \la + \xl{2 }{\ell} =  (\la +1) + \xl{(-1)
}{\ell}(\la +3), $ for any $\ell \in L$.
\end{example}

\begin{example}~\label{badtrop} The polynomials $\la +1$ and $\la +2$
generate all of $K[\la]$ in the classical world,  whereas their
tropicalizations are the same when $v(1)=v(2)$, and thus generate
a proper ideal of the layered \domain0. Thus, the ideal $\langle
\tlv(X) \rangle$ generated by the tropicalization of a set $X
\subset K[\Lm ]$ need not be the tropicalization of the
ideal~$\langle X \rangle$ generated by $X$, i.e., $\tlv(\langle X
\rangle)$. (This difficulty is overcome by restricting one's
attention to Groebner bases.)\end{example}

In \cite[Theorem~2.1]{SpeyerSturmfels2004b} and the subsequent
discussion, the \textbf{tropical variety} of an ideal $A \subseteq
K[\Lm ]$ is defined as the intersection of tropical hypersurfaces
of all polynomials $\tlv(f)$ for $f$ in $A$. This is easily seen
to be the corner locus of the tropicalization of $A$,   so we
would like to obtain
 algebraic properties of tropicalized ideals.

\begin{rem}\label{suppsum1} Over any field $K$, if $h\in \supp(f) \cap \supp(g)$ for $f,g$ contained
in a $K$-subspace $V$ of~$K[\Lm ],$ then one can replace $g$ by
$\a g$ for suitable $\a \in K$ and assume that $f$ and $\a g$ have
the same monomial with support $h,$ so $h\notin \supp(f-\a g).$ In
other words, $f-\a g $ has support contained in $(\supp(f) \cup
\supp(g)) \setminus \{h\}.$
\end{rem}

Translated to layered \domains0, Remark~\ref{suppsum1} in
conjunction with Remark~\ref{suppsum} yields:

\begin{prop}\label{tropc} Suppose $f,g \in \tI $ where $\tI $ is a tropicalized ideal. Then for any $h\in \supp(f)\cap \supp(g)$
we can write $$f+g = p  + q,$$ where  $ h\in  \supp (p) \subseteq
\supp(f) \cap \supp(g),$  $q \in \tI ,\ $ and $ \supp(q )
\subseteq (\supp(f) \cup \supp(g)) \setminus (\supp (p )\cup \{ h
\}) .$
\end{prop}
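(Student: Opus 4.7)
My plan is to exploit that $\tI$ is tropicalized by lifting $f,g$ back to the classical polynomial ring, applying Remark~\ref{suppsum1} there, and then tropicalizing the resulting decomposition.

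Since $\tI$ is tropicalized, there is a classical ideal $A\subseteq K[\Lm]$ with $\tI\cap \tT[\Lm] = \tlv(A)$. I would choose lifts $\tilde f,\tilde g\in A$ whose supports coincide with $\supp(f)$ and $\supp(g)$, so that the coefficients $\tilde c_f,\tilde c_g\in K$ at the monomial with support $h$ satisfy $v(\tilde c_f)=c_f$ and $v(\tilde c_g)=c_g$. Setting $\alpha:=\tilde c_f/\tilde c_g\in K$, Remark~\ref{suppsum1} produces $\tilde q := \tilde f - \alpha\tilde g\in A$ whose support lies in $(\supp(\tilde f)\cup\supp(\tilde g))\setminus\{h\}$, and tropicalizing yields $q:=\tlv(\tilde q)\in\tlv(A)\subseteq \tI$ with $\supp(q)\subseteq (\supp(f)\cup\supp(g))\setminus\{h\}$. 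Moreover, at any $s\in \supp(f)\triangle \supp(g)$ only one of $\tilde f,\tilde g$ contributes a monomial, so the classical sum cannot vanish there, and therefore $\supp(q)\supseteq \supp(f)\triangle \supp(g)$.

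Define $\supp(p) := (\supp(f)\cup\supp(g))\setminus \supp(q)$; this set automatically contains $h$ and is contained in $\supp(f)\cap\supp(g)$. Letting $p$ be the sum of the monomials of $f+g$ at the supports in $\supp(p)$, Remark~\ref{suppsum} together with the disjointness of $\supp(p)$ and $\supp(q)$ would give $f+g = p+q$ with the support conditions demanded by the proposition.

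The main obstacle is coefficient compatibility: at each $s\in\supp(q)$ the coefficient of $q$ must match the coefficient of $f+g$. This is immediate at $s\in\supp(f)\setminus\supp(g)$, but at $s\in\supp(g)\setminus\supp(f)$ the coefficient in $\tilde q$ is $-\alpha$ times that in $\tilde g$, so its tropicalization is $v(\alpha)+c_{g,s}$, which differs from $c_{g,s}$ when $v(\alpha)\neq 0$. I would handle this by assuming (after swapping $f$ and $g$ if necessary) that $c_f\le c_g$, so $v(\alpha)\le 0$ and the discrepancy is subdominant, then absorbing it by adding a suitable tangible multiple of $g\in\tI$ to $q$; an analogous adjustment handles supports in $(\supp(f)\cap\supp(g))\setminus\{h\}$. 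The technical heart of the proof is verifying that these layered adjustments produce a single $q\in\tI$ whose coefficients exactly reproduce those of $f+g$ on $\supp(q)$, so that the classical cancellation of Remark~\ref{suppsum1} genuinely translates into the required tropical identity $f+g=p+q$.
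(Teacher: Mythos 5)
Your approach is essentially the same as the paper's: lift $f$ and $g$ to pre-images $\bar f,\bar g$ in $K[\Lm]$ lying in the classical ideal $A$, rescale one of them so that the $h$-monomials cancel, invoke Remark~\ref{suppsum1} to obtain an element of $A$ of smaller support, and tropicalize. You have also put your finger on the genuine subtlety that the paper's terse argument does not resolve: when $f_h\ne g_h$, the scalar $\alpha=\tilde c_f/\tilde c_g$ has $v(\alpha)\ne 0$, so the tropicalization of $\tilde f-\alpha\tilde g$ carries a uniform $v(\alpha)$-shift on all coefficients inherited from $\tilde g$, and hence need not coincide with the restriction of $f+g$ to $\supp(q)$ --- which is exactly what the identity $f+g=p+q$ forces $q$ to be, once $\supp(p)$ and $\supp(q)$ are disjoint.

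The repair you sketch does not, as stated, close this gap. Adding a tangible multiple $\beta g$ to $q$ re-introduces a monomial with support $h$ (since $h\in\supp(g)$), violating the requirement $h\notin\supp(q)$; rescaling by $\alpha^{-1}$ instead merely transfers the $v(\alpha)$-discrepancy from the $g$-inherited coefficients to the $f$-inherited ones. Moreover, because $\supp(p)$ and $\supp(q)$ are disjoint and $\supp(p+q)=\supp(p)\cup\supp(q)$, the equation $f+g=p+q$ determines the coefficients of $q$ uniquely (they must be those of $f+g$ on $\supp(q)$), leaving no slack for absorbing adjustments after the fact. You are right to call this the technical heart: it is precisely where the argument remains incomplete, and the paper's own one-line treatment of this step is no more explicit, so the concern you raise applies to it as well.
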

\begin{proof} Adjust the respective pre-images $\bar f$ and
$\bar g$ (in $K[\Lambda]$) of $f$ and $g$ such that the monomials
with support $h$ cancel, and now write $\overline{q}$ for the sum
of the remaining monomials of $\bar f-\bar g $ that have common
support in both $\bar f$ and $\bar g$.   Then we write
$\overline{p}$ for the sum in $K[\Lm ]$ of the monomials of $\bar
f  $ not appearing in the support of $\overline{q}$. Thus, $\supp
\bar p$ is contained in $\supp g$ as well as $\supp f$, and
letting $p$ and $q$ be the respective tropicalizations of
$\overline{p}$ and $\overline{q}$, we have $h \notin \supp (q)$.
\end{proof}

In \S\ref{Eucl} we formalize the conclusion of
Proposition ~\ref{tropc} to restrict the class of ideals under
consideration.
%
%
%
%
%
We treated the layered Nullstellensatz briefly in
\cite{IzhakianKnebuschRowen2009Refined}, and need some relevant
observations here.

\begin{defn}\label{def:stackIdeal} A polynomial $f\in \tR$ is \textbf{covered} by $\tS \subseteq \tR$,
if for each \mcomp\ $D_{f,i}$ of $f$ there is some $g\in  \tS $
for which $\vmap_g \le \vmap_f$ on $D_{f,i}.$ A subset $\tS'
\subset \tS$ is \textbf{covered} by $\tS$ if each each $f\in \tS'$
is covered by $\tS $.\end{defn}

\begin{lem} If $\tS$ generates $\tI $, then $\tI$ is covered by $\tS.$\end{lem}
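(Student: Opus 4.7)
The plan is to extract from the generating expression of $f$ a single generator in $\tS$ that covers each component of $f$.

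Since $\tS$ generates $\tI$, write $f = \sum_{k=1}^{N} r_k g_k$ with $r_k \in \tR$ and $g_k \in \tS$. The first ingredient is a pointwise layered inequality: at any $\bfa \in S$, combining the layered addition rule~\eqref{14} with the multiplicativity of $s$ in~\eqref{13} yields
$$\vmap_f(\bfa) \;=\; \sum_{k \in M(\bfa)} s\bigl(r_k(\bfa)\bigr)\, s\bigl(g_k(\bfa)\bigr),$$
where $M(\bfa) := \{k : r_k(\bfa)g_k(\bfa) \nucong f(\bfa)\}$ is the $\nu$-active set at $\bfa$. Since $L = L_{\ge 1}$, each $s(r_k(\bfa)) \ge 1$, so every summand on the right is $\ge s(g_k(\bfa)) = \vmap_{g_k}(\bfa)$; hence $\vmap_{g_k}(\bfa) \le \vmap_f(\bfa)$ for every $k \in M(\bfa)$.

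Next, fix a component $D_{f,i}$ with dominant monomial $h_i$, so that $f \nucong h_i$ on $D_{f,i}$. Each summand $r_k g_k$ satisfies $f \nuge r_k g_k$ pointwise (being a term of a tropical sum), and at every $\bfa \in D_{f,i}$ at least one $k$ lies in $M(\bfa)$. The task is to locate a single $k_0$ that is active throughout $D_{f,i}$. I would consider the sets $E_k := \{\bfa \in D_{f,i} : k \in M(\bfa)\}$: by Lemma~\ref{multmon}, the pairwise comparisons of $r_k g_k$ against $r_j g_j$ are rigid along paths, so each $E_k$ is closed in $D_{f,i}$ and the finitely many $E_k$ cover $D_{f,i}$. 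Choosing $k_0$ for which $E_{k_0}$ has nonempty interior in $D_{f,i}$ (possible by finiteness of the cover together with the denseness hypothesis of Definition~\ref{dense}), Lemma~\ref{mult11} applied to the monomial expansions of $r_{k_0} g_{k_0}$ and of the other summands forces $E_{k_0} = D_{f,i}$. The pointwise bound then gives $\vmap_{g_{k_0}} \le \vmap_f$ on $D_{f,i}$, so $g_{k_0} \in \tS$ covers this component.

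The main obstacle is the uniformity step—showing that a single $k_0$ is active at every $\bfa \in D_{f,i}$. The key inputs are the rigidity of tropical monomial comparisons supplied by Lemmas~\ref{multmon} and~\ref{mult11}, together with the fact that $D_{f,i}$ is a maximal region where a single monomial of $f$ dominates, which strongly constrains how the active set $M(\bfa)$ can vary. These together reduce the uniformity question to a finite combinatorial selection among the $N$ summands appearing in the generating expression.
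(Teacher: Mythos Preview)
Your proof is correct and follows the paper's approach: both arguments establish that on each component $D_{f,i}$ some single summand $r_{k_0}g_{k_0}$ is $\nu$-dominant throughout, then read off the layer inequality $\vmap_{g_{k_0}}\le\vmap_f$ from $L=L_{\ge 1}$ and multiplicativity of $s$. The paper compresses your uniformity step into a one-line appeal to Theorem~\ref{uniquedec} (refining $f=\sum r_k g_k$ into monomials gives an alternative decomposition of $f$, whose components and dominant monomials must coincide with the original), which is precisely what your argument via Lemmas~\ref{multmon} and~\ref{mult11} and the denseness hypothesis re-derives by hand.
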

\begin{proof} Write $f = \sum p_i g_i$ for $p_i \in \tR$ and $g _i
\in \tI .$ Then for any monomial $h$ of $f$, we have some $p_i
g_i$ equal to $h$ on an open set, and thus on the $h$-component of
$f$, in view of
 Theorem~\ref{uniquedec}.
\end{proof}

In a sense, the layered Nullstellensatz of
\cite[Theorem~6.13]{IzhakianKnebuschRowen2009Refined} is the
converse, which we rephrase as follows:

\begin{thm}\label{Null3} \textbf{(Layered Nullstellensatz)}
Suppose $F$ is a 1-divisibly closed, archimedean, $L$-layered
1-\semifield0, $\tI  \triangleleft F[\Lm ]$ is \lclo-closed, and
$f \in F[\Lm ].$ Then $f$ is covered by~$\tI $
  iff $f \in \sqrt{\tI }.$
\end{thm}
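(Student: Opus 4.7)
The plan is to recognize this as essentially a restatement of Theorem~6.13 of \cite{IzhakianKnebuschRowen2009Refined}, so the proof is largely a dictionary between the two formulations: the language of ``covered by $\tI$'' used here versus the existential statement in the earlier paper. The two hypotheses that are explicitly needed --- $F$ being 1-divisibly closed and archimedean --- are exactly those invoked in \cite{IzhakianKnebuschRowen2009Refined}, and the \lclo-closedness ensures that any polynomial which is $\lmodLnu$-equal to a member of $\tI$ is itself in $\tI$, which is what allows one to pass from ``approximate'' to ``actual'' membership.

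For the easy ($\Leftarrow$) direction, suppose $f \in \sqrt{\tI}$, so $f^m \in \tI$ for some $m \in \Net$. By the preceding Lemma (applied with $\tS = \tI$), the ideal $\tI$ covers itself, hence covers~$f^m$. On any component $D_{f,i}$ with dominant monomial $h_i$, the dominant monomial of $f^m$ is $h_i^m$, so $D_{f,i} \subseteq D_{f^m,j}$ for the index $j$ with $h_j' = h_i^m$. A witness $g \in \tI$ with $\vmap_g \le \vmap_{f^m}$ on this component, multiplied by a suitable tangible element to adjust the sort, furnishes (after passing through the 1-divisible closure in order to take an $m$-th root) an element of $\tI$ witnessing $\vmap \le \vmap_f$ on $D_{f,i}$; here \lclo-closedness of $\tI$ is used to guarantee the adjusted element still lies in $\tI$.

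For the hard ($\Rightarrow$) direction, assume $f$ is covered by $\tI$. For each component $D_{f,i}$ with dominant monomial $h_i$, pick $g_i \in \tI$ with $\vmap_{g_i} \le \vmap_f$ on $D_{f,i}$. Using Theorem~\ref{uniquedec} the components are well-defined and finite in number, so we may work with finitely many $g_i$. The archimedean hypothesis lets us choose $m \in \Net$ large enough so that, on each $D_{f,i}$, a suitable tangible multiple of $g_i$ dominates $f^m$ outside a controlled ghost layer, while on the same component $h_i^m$ is the dominant monomial of $f^m$. Assembling these contributions gives a polynomial $\tilde f \in \tI$ which agrees with $f^m$ up to \lclo-surpassing on every component, and hence $f^m \lmodLnu \tilde f$. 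The \lclo-closedness of $\tI$ then forces $f^m \in \tI$, i.e.\ $f \in \sqrt{\tI}$.

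The main obstacle is the hard direction, specifically the bookkeeping needed to combine the local witnesses $g_i$ on different components into a single element of $\tI$ that $\nu$-equals some power of $f$. The archimedean hypothesis is precisely what is needed to dominate the finitely many ``competing'' monomials off the intended component by taking a sufficient power, and the entire construction ultimately invokes the proof of \cite[Theorem~6.13]{IzhakianKnebuschRowen2009Refined} rather than reproducing it from scratch.
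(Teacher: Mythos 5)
Your overall strategy matches the paper exactly: the paper gives \emph{no} proof of this theorem, merely noting in the preceding sentence that it is a rephrasing of \cite[Theorem~6.13]{IzhakianKnebuschRowen2009Refined}, which you correctly identify as the sole content of the ``proof.'' The ($\Leftarrow$) direction via the lemma that ``$\tI$ is covered by $\tS$ whenever $\tS$ generates $\tI$'' (with $\tS = \tI$) is also the correct background reference.

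That said, the intermediate sketches you offer go beyond what the paper records, and the ($\Leftarrow$) sketch has a gap worth flagging: having a witness $g \in \tI$ with $\vmap_g \le \vmap_{f^m}$ on a component does not straightforwardly yield a witness $\le \vmap_f$, because for a general sorting set $L$ one has $\vmap_{f^m} = \vmap_f^m \ge \vmap_f$ (with strict inequality possible whenever $\vmap_f(\bfa) \ne 1$ and is finite), and ``taking an $m$-th root'' of $g$ in the 1-divisible closure produces an element whose membership in $\tI$ and whose layering map are both uncontrolled by the \lclo-closure hypothesis alone. In the standard supertropical case $L=\{1,\infty\}$ this issue disappears since $\vmap_{f^m}=\vmap_f$, but for general $L$ the bookkeeping is genuinely harder. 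Since you ultimately defer to the cited proof rather than claiming these sketches as self-contained, this does not invalidate your approach, but if you were to fill in the details you would need to be more careful here.
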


This specializes to the following assertion for prime ideals:

\begin{thm}\label{Null3} \textbf{(Layered Nullstellensatz  for prime ideals)}
Suppose $F$ is a 1-divisibly closed, archimedean, 1-\semifield0,
$P \triangleleft F[\Lm]$ is a prime, \lclo-closed ideal, and $f
\in F[\Lm].$ Then $f$ is covered by~$P$
  iff $f \in P .$
\end{thm}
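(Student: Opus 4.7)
The plan is to derive this as an immediate specialization of the preceding Layered Nullstellensatz (the general \lclo-closed version) by observing that primeness forces $\sqrt{P} = P$. So the argument splits into the easy reverse implication and the forward implication that piggybacks on the general theorem plus a radical computation.

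First I would dispatch the reverse direction. If $f \in P$, take $g := f$ in Definition~\ref{def:stackIdeal}: then $\vmap_g = \vmap_f$ on every component $D_{f,i}$, so trivially $\vmap_g \le \vmap_f$ on each $D_{f,i}$, witnessing that $f$ is covered by $P$.

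For the forward direction, assume $f$ is covered by $P$. Since $P$ is in particular \lclo-closed, the general Layered Nullstellensatz (the preceding theorem) applies and yields $f \in \sqrt{P}$; that is, $f^m \in P$ for some $m \in \Net$. I would then prove by induction on $m$ that $f^m \in P$ forces $f \in P$, using only the primeness condition stated in \S\ref{chap:TropicalArithmetic} (namely, $a,b \notin P \Rightarrow ab \notin P$, contrapositively $ab \in P \Rightarrow a \in P$ or $b \in P$). The base case $m=1$ is vacuous; for the inductive step, write $f^m = f \cdot f^{m-1} \in P$, so either $f \in P$ (done) or $f^{m-1} \in P$, and the inductive hypothesis gives $f \in P$. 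Hence $\sqrt{P} = P$, and we conclude $f \in P$.

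I do not expect any genuine obstacle here: all the analytic content (the role of the archimedean, 1-divisibly closed hypotheses; handling layered values on components) has already been absorbed into the general Nullstellensatz being invoked, and the reduction $\sqrt{P} = P$ is formal from the monoid-style definition of prime ideal. The only thing worth flagging is that the definition of prime ideal used in this paper is on the multiplicative monoid $R \setminus P$, so the standard radical-equals-self argument goes through verbatim without needing any extra structure from the layering.
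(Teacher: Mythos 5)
Your proposal is correct and takes exactly the approach the paper intends: the paper gives no proof at all, simply stating "This specializes to the following assertion for prime ideals" after the general Layered Nullstellensatz, and your argument (apply the general theorem, then note $\sqrt{P}=P$ for prime $P$ via the usual induction on the exponent, together with the trivial reverse direction) is precisely the intended specialization spelled out.
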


\subsection{Corner ideals}
We also can describe ideals of polynomials in terms of layering
maps.

\begin{defn}\label{geomid}  Given  $Z\subseteq S,$ define $\tR_Z = \Fun(Z,F) \cap \tR.$
Given any layering map $\vmap : Z\to L$, define
  $\tI _{\vmap}(Z)$ to be $$ \tI _{\vmap}(Z):= \text{$\{ f \in
\mcR_Z  : f(\bfa) $ is  $\vmap(\bfa)$-ghost, $\forall \bfa \in Z
\}.$}$$
A  \textbf{corner ideal} of $\mcR_Z $ is an ideal of the
form $\tI _{\vmap}(Z)$ for a suitable
 map $\vmap : Z\to L$. When $Z$ is understood, we write $\mcI_{\vmap}$ for $\tI _{\vmap}(Z)$.
\end{defn}

Strictly speaking, the notation for $Z$ is redundant, since we may
choose $\tSS$ as we please.  But often we start with $\tSS =
F^{(n)}$, and then take $Z$ to be a closed subset of $S$ with
respect to the layered component topology, so we have utilized the
symbol $Z$ for clarification. Recall that we assume $L = L_{\ge
1}.$

\begin{prop}\label{Zarcorresp}
   $\tI _{\vmap}:=\tI _{\vmap}(Z)\triangleleft \tR_Z$, and there   are 1:1 order-reversing correspondences between
the layering maps of ideals of $\mcR_Z $ and the corner ideals of
$\mcR_Z $, given by $\vmap \mapsto \tI _{\vmap}(Z)$ and $I \mapsto
\vmap_I$. \end{prop}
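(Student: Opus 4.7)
The plan has three parts: first verify that $\tI_{\vmap}(Z)$ is an ideal of $\tR_Z$, then exhibit an inverse correspondence $I \mapsto \vmap_I$, and finally check both compositions give identities and that the correspondence reverses inclusions.

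For the ideal property, additive closure uses the addition rule of Example~\ref{exs}: if $f(\bfa)$ and $g(\bfa)$ are both $\vmap(\bfa)$-ghost, then $\lv((f+g)(\bfa))$ equals $\lv(f(\bfa))$, $\lv(g(\bfa))$, or (when $f(\bfa)\nucong g(\bfa)$) their sum in $L$, each of which remains $\vmap(\bfa)$-ghost. For absorption by $h\in\tR_Z$, multiplicativity of $\lv$ gives $\lv((hf)(\bfa)) = \lv(h(\bfa))\cdot\lv(f(\bfa))$; writing $\lv(f(\bfa)) = \vmap(\bfa)+p$ and using distributivity in $L$ together with $L = L_{\ge 1}$ (so $\lv(h(\bfa)) \ge 1$), one obtains $\lv((hf)(\bfa)) = \vmap(\bfa)+q$ for some $q\in L$, hence $\vmap(\bfa)$-ghost.

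For the correspondence, given an ideal $I\triangleleft\tR_Z$, I would define $\vmap_I:Z\to L$ by letting $\vmap_I(\bfa)$ be the greatest $\ell\in L$ such that $f(\bfa)$ is $\ell$-ghost for every $f\in I$, defaulting to $1$ when no stronger uniform ghost-constraint holds. The identity $\vmap_{\tI_{\vmap}} = \vmap$ is then verified by exhibiting, at each $\bfa\in Z$, a witnessing element of $\tI_\vmap$ whose sort at $\bfa$ is exactly $\vmap(\bfa)$; concretely, a tangible monomial scaled by $e_{\vmap(\bfa)}$ suffices, since such an element lies in $\tR_Z$. For the other direction, if $I = \tI_{\vmap'}$ is a corner ideal, then the first identity gives $\vmap_I = \vmap'$, so $\tI_{\vmap_I} = \tI_{\vmap'} = I$.

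Order-reversal is then immediate: $\vmap_1 \le \vmap_2$ pointwise makes the $\vmap_2$-ghost condition strictly stronger, yielding $\tI_{\vmap_2}(Z) \subseteq \tI_{\vmap_1}(Z)$; dually, $I_1 \subseteq I_2$ forces $\vmap_{I_1} \ge \vmap_{I_2}$ by definition of $\vmap_I$. The main obstacle I expect is the identity $\vmap_{\tI_\vmap} = \vmap$: one must construct ``minimal-layer'' witnesses in $\tI_\vmap$ at each point while staying inside $\tR_Z = \Fun(Z,F)\cap\tR$, and extra care is needed when $\vmap(\bfa)$ is infinite in~$L$, since then the naive pointwise witnesses may fail to extend to elements of $\tR$ definable in the language $\tL$.
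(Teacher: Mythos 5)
Your verification that $\tI_\vmap(Z)\triangleleft\tR_Z$ is sound and more detailed than the paper's one-liner (``the layering increases under multiplication''), and you correctly identify that the heart of the matter is the identity $\vmap_{\tI_\vmap}=\vmap$. But your witness fails, and not in the case you flagged. A tangible monomial scaled by $e_{\vmap(\bfa)}$ has layer $\vmap(\bfa)$ at \emph{every} point of $Z$, so for it to lie in $\tI_\vmap(Z)$ at all, the layer $\vmap(\bfa)$ must be $\vmap(\bfb)$-ghost for every $\bfb\in Z$; taking $\bfb=\bfa$ already forces $\vmap(\bfa)$ to be $\vmap(\bfa)$-ghost, i.e.\ infinite in the sense of Example~\ref{exs1}. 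So the constant-layer witness is not even a member of $\tI_\vmap(Z)$ whenever $\vmap(\bfa)$ is finite --- the opposite of the obstacle you anticipated --- and even for infinite layers nothing guarantees it meets the ghost condition at the other points of $Z$.

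The deeper issue is that you never use the restriction, built into the statement, that $\vmap$ range over layering maps of \emph{ideals}. The paper's proof, though terse, exploits this: if $\vmap=\vmap_J$ for some ideal $J$, then every $f\in J$ is $\vmap_J(\bfa)$-ghost at each $\bfa$ by the very definition of $\vmap_J$, so $J\subseteq\tI_\vmap(Z)$; combining the order-reversal of $I\mapsto\vmap_I$ (applied to $J\subseteq\tI_\vmap(Z)$) with the trivial inequality $\vmap_{\tI_\vmap}\geq\vmap$ yields $\vmap_{\tI_\vmap}=\vmap$ with no pointwise witnesses at all. Your approach would have to produce, inside $\tI_\vmap(Z)$, an element whose layer at $\bfa$ is minimal subject to the global constraint over all of $Z$, and that is exactly what the auxiliary ideal $J$ supplies and a scaled monomial does not. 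A secondary point: you have quietly fixed a definition of $\vmap_I$ (greatest $\ell$ such that every $f\in I$ is $\ell$-ghost at $\bfa$) that the paper leaves implicit; this is a reasonable choice, but your order-reversal step should be stated with respect to the ghost relation on $L$, since $\ell_1\leq\ell_2$ in an unspecified pointwise order on $L$ does not by itself make the $\ell_2$-ghost condition stronger.
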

\begin{proof} Clearly  $\tI _{\vmap}(Z)$ is an ideal, since
 the  layering increases under multiplication.  For the second
assertion, one just follows the standard arguments in the Zariski
correspondence. Namely, we need to show that for any layering map
$\vmap$, defining the  geometric layered ideal $I = I_\vmap$, that
$\vmap _I = \vmap$ and $I_{\vmap_I} =  I$.

Clearly $I \supseteq I_{\vmap_I}$. But if $f\in I$ then by
definition $f \in \vmap_I.$ Hence $\vmap _I = \vmap$, so
$I_{\vmap_I} = I_\vmap = I$.
 \end{proof}

\section{Prime and maximal ideals of layered polynomial
\semirings0}\label{prime}

  We
are ready for our main algebraic interest in this paper, the
structure of the prime ideals of $\tR$, with special attention
paid to polynomial \semirings0 taken over a layered 1-\semifield0
$F$.

\begin{lem}\label{primeid} For any prime ideal $P$ of a uniform $L$-layered \domain0 $R$, either $e_k \in P$
for some $k\in L$ or $$ P = \bigcup _{\ell\in L}  e_\ell P_1,$$
where $P_1 = P \cap R_1$ is a prime monoid ideal of $R_1.$ Conversely, if
$P \cap R_1$ is a prime monoid ideal of $R_1,$ then $\bigcup _{\ell\in L}  e_\ell P_1,$ is a prime ideal of $R$.
\end{lem}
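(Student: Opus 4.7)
The plan is to handle the forward direction by a case split on whether some $e_k$ lies in $P$, and for the converse to verify ideal and primality axioms directly, using the explicit product/sum formulas \eqref{13}--\eqref{14} for the uniform layered construction.

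For the forward direction, recall from Example~\ref{exs} that every element of $R$ has a \emph{unique} representation $a = e_\ell a_1$ with $\ell \in L$ and $a_1 \in R_1$. Assume now the negation of the first alternative, namely that $e_k \notin P$ for every $k \in L$. Take any $a = e_\ell a_1 \in P$; since $a = e_\ell \cdot a_1$ and $P$ is prime, either $e_\ell \in P$ (impossible by assumption) or $a_1 \in P$, in which case $a_1 \in P \cap R_1 = P_1$. Thus $P \subseteq \bigcup_{\ell} e_\ell P_1$, and the reverse inclusion is automatic since $P$ is an ideal (it absorbs multiplication by $e_\ell \in R$). To see that $P_1$ is a prime monoid ideal of $R_1$: closure under $R_1$-multiplication is inherited from $P$ being an $R$-ideal; and if $a_1 b_1 \in P_1 \subseteq P$ with $a_1, b_1 \in R_1$, primeness of $P$ gives $a_1 \in P$ or $b_1 \in P$, hence the same in $P_1$.

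For the converse, assume $P_1 \triangleleft R_1$ is a prime monoid ideal and set $P := \bigcup_{\ell \in L} e_\ell P_1$. I would verify the three properties in order: additive closure, absorption, primeness. Absorption is immediate from $(e_m r_1)(e_\ell a_1) = e_{m\ell}(r_1 a_1) \in e_{m\ell} P_1$, using that $P_1$ is a monoid ideal of $R_1$. Primeness likewise reduces at once: if $(e_k a_1)(e_\ell b_1) = e_{k\ell}(a_1 b_1) \in P$, then $a_1 b_1 \in P_1$, so $a_1 \in P_1$ or $b_1 \in P_1$ by primeness of $P_1$, giving $a \in P$ or $b \in P$.

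The step that requires the most care — and which I expect to be the main obstacle — is additive closure, because the three-branch definition \eqref{14} means the sum of $e_k a_1$ and $e_\ell b_1$ can land in $R_k$, $R_\ell$, or $R_{k+\ell}$ depending on the comparison of $a_1$ and $b_1$ in the ordered monoid $\tT \cong R_1$. I would run through the three cases of \eqref{14}: if $a_1 >_\nu b_1$ the sum is $e_k a_1 \in e_k P_1 \subseteq P$; if $a_1 <_\nu b_1$ the sum is $e_\ell b_1 \in e_\ell P_1 \subseteq P$; and if $a_1 = b_1$ the sum is $e_{k+\ell} a_1 \in e_{k+\ell} P_1 \subseteq P$, since in each branch the surviving ``tangible part'' remains in $P_1$. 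This completes the verification that $P$ is an ideal, and combined with the primeness check above, yields the second statement.
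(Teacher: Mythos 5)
Your proof is correct and follows the same core approach as the paper's: decompose $a = e_\ell a_1$ and use primeness of $P$ to conclude $e_\ell \in P$ or $a_1 \in P$. The paper's own proof is quite terse — it only sketches the forward dichotomy and leaves the verification that $P_1$ is a prime monoid ideal, as well as the entire converse, to the reader; you fill in those routine but worthwhile details, in particular the case analysis on \eqref{14} for additive closure of $\bigcup_\ell e_\ell P_1$, which is indeed the one place where the layered structure actually enters.
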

\begin{proof} Clearly $ P \supseteq \bigcup _\ell P_1
e_\ell.$ For the other direction, assume that $a\in P$, with $s(a)
= \ell.$ Then $a = e_\ell a_1$ for some  $a_1 \in R_1,$ so we are
done unless $a_1 \notin P,$ in which case $e_\ell \in P.$
\end{proof}

 Since the Nullstellensatz provides a correspondence from
geometric components to radical ideals, and every radical ideal is
the intersection of prime ideals, there must be many prime ideals
lurking around that are  not corner ideals. But we focus on
\corner\ ideals since there are too many \semiring0 ideals for
studying tropical geometry effectively.

\subsection{Prime ideals of supertropical polynomial semirings}

 Recall that the \textbf{standard supertropical} theory is obtained  for $L =  \{ 1,
 \infty\},$ where the transition map
$\nu_{\infty,1}$ is now the ghost map, which we denote as $\nu$.
Although this theory is a special case of the layered theory, it
has a different flavor, so we start with it  and then use the
layered theory
for refinement. 

\begin{rem}\label{compu} For any  positive $k,\ell  \in L$, we have
  $$(\xl{a_1 }{\ell}   + \xl{a_2 }{k} )(\xl{a_1 }{k}   +
\xl{a_2 }{\ell} ) =  \xl{(a_1^2 + a_2^2) }{ k\ell} + \xl{(a_1
a_2)}{\ell^2 +k^2},$$ which has layer $\ge   k\ell .$
It follows  for any $a$, that any  prime ideal $P$ of $\mcR$
containing $\mcR_{k\ell}$ also contains either $\xl{\la }{\ell} +
\xl{a}{k} $ or $\xl{\la}{k} + \xl{a}{\ell}$. In particular, in the
standard supertropical case, taking $k =\infty$ and $ \ell =1,$
$$(a_1 + a_2 ^\nu )(a_1 ^\nu + a_2) = (a_1^2 + a_1 a_2
+a_2^2)^{\nu}.$$\end{rem}

%
%
%

We quote the factorization in
\cite[Theorem~8.51]{IzhakianRowen2007SuperTropical}.

\begin{thm}
 \label{permprime} For any supertropical \semiring0, suppose $f = \sum_{i=1}^m f_i \in \Fun(\tSS,F)
$, for $m \ge 2$. Then
\begin{equation}\label{twofacts}
 \prod_{i < j } (f_i + f_j)  = g_1\cdots g_{m-1}\ ,
\end{equation}
where $g_1 = f= \sum_i f_i,$ $g_2 = \sum_{i < j } f_i f_j,$ $
\dots$, and $g_{m-1} = \sum_{i }\prod_{j \neq i} f_j$.
\end{thm}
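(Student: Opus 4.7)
The plan is to verify \eqref{twofacts} pointwise in $\Fun(\tSS,F)$. Evaluating at a fixed $\bfa\in\tSS$ and setting $a_i := f_i(\bfa)$, the identity reduces to the scalar claim $\prod_{i<j}(a_i+a_j) = g_1(a)\cdots g_{m-1}(a)$ in the supertropical semiring $F$. Both sides are symmetric in the $a_i$, so after permutation we may arrange $a_1\nule a_2\nule\cdots\nule a_m$.

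I would first dispose of the \emph{generic case} in which all $a_i$ are tangible and $\nu$-distinct. Then $a_i+a_j = a_j$ for $i<j$, so the left-hand side collapses to $\prod_{j=2}^m a_j^{\,j-1}$. On the right, each $g_k(a)=\sum_{|S|=k}\prod_{i\in S}a_i$ has a uniquely dominant summand, namely the product $a_{m-k+1}\cdots a_m$ of the $k$ largest values, so $g_k(a)=a_{m-k+1}\cdots a_m$. Multiplying across $k=1,\dots,m-1$, the exponent of $a_j$ equals $\#\{k:k\ge m-j+1\}=j-1$, yielding the same $\prod_{j=2}^m a_j^{\,j-1}$. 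The two sides agree.

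The main obstacle is the \emph{ghost case}, where some $a_i\nucong a_j$ or some $a_i$ already lie in a ghost layer: then factors $a_i+a_j$ on the left produce ghosts, several summands of some $g_k$ on the right tie and also produce ghosts, and the two ghost patterns must match. My approach is to partition the indices into blocks of mutually $\nu$-equal values and, on each side, compute the underlying $\nu$-value together with its sort, using the supertropical rules $a+a=a^\nu$ and $a+b=a$ for $a>_\nu b$. The underlying $\nu$-values are forced to agree by the generic computation, which depends only on the ordering of $\nu$-values. The remaining combinatorial bookkeeping is to verify, monomial by monomial, that a term arising with multiplicity $\ge 2$ in the $2^{\binom{m}{2}}$-term expansion of $\prod_{i<j}(f_i+f_j)$ (each pair $(i,j)$ contributes either $f_i$ or $f_j$) does so on the other side as well, via the choice-of-subsets expansion of $g_1\cdots g_{m-1}$, so that both collapse to ghosts of the same $\nu$-value.

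An alternative structural route would be induction on $m$. Splitting $\prod_{i<j\le m}(f_i+f_j) = \prod_{i<m}(f_i+f_m)\cdot\prod_{i<j<m}(f_i+f_j)$, one applies the induction hypothesis to the second factor to get $g_1^{(m-1)}\cdots g_{m-2}^{(m-1)}$, expands the first factor as $\sum_{k=0}^{m-1} f_m^{\,m-1-k}g_k^{(m-1)}$, and uses the recursion $g_k=g_k^{(m-1)}+f_m\,g_{k-1}^{(m-1)}$ (with $g_0^{(m-1)}=1$) to reassemble $g_1\cdots g_{m-1}$. The repeated-monomial corrections that would obstruct the identity classically once again collapse to ghosts and are invisible as functions in the supertropical setting.
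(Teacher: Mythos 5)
The statement you are proving is not actually proved in this paper---it is quoted verbatim from \cite[Theorem~8.51]{IzhakianRowen2007SuperTropical}. The nearest thing to a proof in this paper is the proof of the layered analogue that immediately follows (same equation but with $\lmodLnu$ in place of $=$). That proof reduces pointwise, sorts the values $a_i=f_i(\bfc)$ in decreasing $\nu$-order, disposes of the strictly decreasing case by a direct computation matching your generic case, and then---this is the key step you do not have---runs induction on $m$: it takes the least $i$ with $a_i\nucong a_{i+1}$, observes that for $j\ge i$ one has $b_j = a_1\cdots a_{i-1}(a_i+a_{i+1})a_{i+2}\cdots a_j$, and replaces the pair $a_i,a_{i+1}$ by the single element $a_i+a_{i+1}$.

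Your Approach A names the right phenomena (the $\nu$-values match because only the ordering matters; ghosts come from ties or from ghost inputs) but leaves the entire non-generic case as unchecked ``bookkeeping.'' That case is the whole content of the theorem: the tangible-generic case is essentially classical. The assertion that a term arising with multiplicity $\ge 2$ in the $2^{\binom{m}{2}}$-term expansion of the left side ``does so on the other side as well'' is not at all obvious---the two sides have entirely different numbers of terms in their expansions, the ties are not in bijection monomial-by-monomial, and one must additionally handle ghostness produced by a single ghost $a_i$ with no tie. This is a genuine gap, not a detail to be filled in.

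Your Approach B is a genuinely different route from the paper's merging induction, and worth noting as such, but as written it is also only a sketch. After substituting the recursion $g_j = g_j^{(m-1)} + f_m g_{j-1}^{(m-1)}$, you would still need to prove, supertropically, that
$\bigl(\sum_{k} f_m^{\,m-1-k}\,g_k^{(m-1)}\bigr)\prod_{j=1}^{m-2} g_j^{(m-1)} = \prod_{j=1}^{m-1}\bigl(g_j^{(m-1)}+f_m g_{j-1}^{(m-1)}\bigr)$.
Classically these differ (there are extra cross terms on the right); showing they agree as supertropical functions requires a Newton-type log-concavity estimate $g_j^{(m-1)} g_k^{(m-1)}\nuge g_{j-1}^{(m-1)} g_{k+1}^{(m-1)}$ for $j\le k$ together with sort-matching in the tied cases. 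Saying the corrections ``once again collapse to ghosts'' passes over exactly the part that needs proof. If you want to close the argument by the most economical route, the paper's trick of merging a $\nu$-tied consecutive pair and inducting on $m$ is the cleanest path.
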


The  role of binomials in prime ideals is found in the following
key observation.

\begin{cor}\label{binom1} In the standard supertropical theory, if $P$ is a   prime
ideal of $\tR$ and $f\in P$, then some binomial of $f$ belongs to~
$P$. \end{cor}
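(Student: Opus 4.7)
The plan is to exploit Theorem~\ref{permprime} directly: that identity already writes a product containing $f$ as a factor on one side, and a product of pairwise sums $f_i+f_j$ on the other. Once $f$ is in the prime ideal $P$, the left-hand product lies in $P$, so primality forces one of the binomial factors $f_i+f_j$ to lie in $P$.

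In more detail, I would first fix a decomposition $f=\sum_{i=1}^m f_i$ into essential monomials with pairwise disjoint support, as guaranteed by Theorem~\ref{uniquedec} and the discussion following Definition~\ref{decomp}, so that the ``binomials of $f$'' are exactly the sums $f_i+f_j$ for $i<j$. If $m=1$ the polynomial $f$ is itself (support-equivalent to) a monomial and the statement is vacuous or trivially handled; so assume $m\geq 2$. Applying Theorem~\ref{permprime} to this decomposition gives
\[
\prod_{i<j}(f_i+f_j) \;=\; g_1 g_2 \cdots g_{m-1},
\]
with $g_1 = f$. Since $f\in P$, the right-hand side lies in $P$, so the left-hand side does as well.

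Now I would invoke the definition of prime ideal from Section~\ref{chap:TropicalArithmetic}: $\tR\setminus P$ is a multiplicative monoid, so if a product of finitely many elements of $\tR$ lies in $P$, then at least one of the factors must lie in $P$. Iterating this across the $\binom{m}{2}$ factors of $\prod_{i<j}(f_i+f_j)$ produces an index pair $i<j$ with $f_i+f_j\in P$, which is by definition a binomial of $f$.

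The only mild obstacle I anticipate is bookkeeping: one must verify that the factorization in Theorem~\ref{permprime} is being applied to the genuine essential-monomial decomposition of $f$ (so that pairs $f_i+f_j$ really are binomials in the sense of Definition~\ref{decomp}), and one must be slightly careful that the identity is one of functions in $\tR$, so that membership in the ideal $P\triangleleft \tR$ is preserved when passing between the two sides of the identity. Both points are immediate from the earlier material, so the argument is essentially a two-line consequence of Theorem~\ref{permprime} together with the monoid characterization of primeness.
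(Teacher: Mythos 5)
Your proof is correct and matches the paper's intended argument: the paper leaves Corollary~\ref{binom1} without an explicit proof precisely because it follows immediately from Theorem~\ref{permprime} in the way you describe — $g_1 = f \in P$ forces the product $\prod_{i<j}(f_i+f_j)$ into $P$, and primality then pins down a single factor $f_i+f_j$, which is by definition a binomial of $f$. Your remark about the degenerate case $m=1$ and the note that the identity holds at the level of functions in $\tR$ are the right points to check, and both are as you say.
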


 Equality fails  in the layered version, since the layers in both
sides need not match, but we still have:

\begin{theorem}\label{permprime} For any $L$-layered \semiring0 $R$,
suppose $f = \sum_{i=1}^m f_i \in \Fun(S,R)
$, for $m \ge 2$. Then
\begin{equation}\label{twofacts2}  g_1\cdots g_{m-1}\, \lmodLnu  \prod_{i < j } (f_i + f_j),
\end{equation}%
where $g_1 = f= \sum_i f_i,$ $g_2 = \sum_{i < j } f_i f_j,$ $
\dots$, and $g_{m-1} = \sum_{i }\prod_{j \neq i} f_j$.
\end{theorem}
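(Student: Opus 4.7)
The strategy is to adapt the supertropical proof of Theorem~\ref{permprime} to the layered setting by expanding both sides as formal polynomials in $f_1,\dots,f_m$ and tracking layered multiplicities monomial by monomial. Both $g_1\cdots g_{m-1}$ and $\prod_{i<j}(f_i+f_j)$ are homogeneous of total degree $\binom{m}{2}$, and an elementary check shows their supports coincide: a monomial $\prod_i f_i^{a_i}$ with $\sum_i a_i=\binom{m}{2}$ and $0\le a_i\le m-1$ appears on the right via any choice function $\sigma:\binom{[m]}{2}\to[m]$, $\sigma(\{i,j\})\in\{i,j\}$, with $a_i=|\sigma^{-1}(i)|$, and on the left via any sequence of subsets $T_k\subseteq[m]$ with $|T_k|=k$ and $a_i=|\{k:i\in T_k\}|$, where $[m]=\{1,\dots,m\}$.

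The critical combinatorial step is then to show, for each admissible tuple $\mathbf{a}=(a_1,\dots,a_m)$, that the multiplicity $n_L(\mathbf{a})$ on the left dominates the multiplicity $n_R(\mathbf{a})$ on the right, and moreover that $n_L(\mathbf{a})=1$ iff $n_R(\mathbf{a})=1$. My plan is to construct an explicit monomial-preserving injection from choice functions into subset sequences---for instance, a greedy assignment in which $T_k$ is built inductively from the indices $i$ with the currently highest values of $|\sigma^{-1}(i)|$, breaking ties by a fixed lexicographic rule. The uniqueness case should be read off from rigidity of the combinatorial constraints, so that both sides have a single realisation precisely at the ``extremal'' monomials. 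This is the main obstacle, since the combinatorics grows rapidly with~$m$; the supertropical identity (which asserts coincidence after ghost collapse) serves as a consistency check that the inequality must hold at the level of totals, so the injection only has to refine this global fact.

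Once the multiplicity comparison is in hand, I would evaluate both sides at an arbitrary $\bfa\in S$ and group the monomials by their $\nu$-value, invoking Theorem~\ref{uniquedec}. On a $\nu$-class with $n_L,n_R\ge 2$ the two sides contribute a common $\nu$-value with respective layers $\sum_h n_L(h)\,s(h(\bfa))$ and $\sum_h n_R(h)\,s(h(\bfa))$, so the inequality $n_L\ge n_R$ combined with Lemma~\ref{twosurp} supplies the required $L$-surpassing; on a $\nu$-class with $n_L=n_R=1$ both sides contribute the same tangible value, and nothing needs to be adjusted. Summing across all $\nu$-classes using the layered addition rule of Example~\ref{exs}, one obtains $g_1\cdots g_{m-1}(\bfa)\lmodLnu\prod_{i<j}(f_i+f_j)(\bfa)$ for every $\bfa\in S$, which is the desired surpassing in $\Fun(S,R)$.
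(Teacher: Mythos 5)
The paper's proof follows a completely different route: it reduces to a pointwise assertion about elements $a_i := f_i(\bfc)$ of $R$, sorts them in descending $\nu$-order, checks that when the $a_i$ are pairwise $\nu$-distinct both products collapse to $a_1^{m-1}a_2^{m-2}\cdots a_{m-1}$ (giving equality), and otherwise picks the minimal $i$ with $a_i\nucong a_{i+1}$, replaces that pair by $a_i+a_{i+1}$, and concludes by induction on $m$. Your plan---formal expansion of both sides, monomial-by-monomial multiplicity comparison, and per-$\nu$-class layer bookkeeping---is a genuinely different strategy, closer in spirit to a direct layered refinement of the supertropical Theorem~8.51.

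There are two real gaps. First, the combinatorial core (the claimed injection giving $n_L\ge n_R$ for every monomial, plus the $n_L=1\Leftrightarrow n_R=1$ rigidity) is only sketched, and you flag it yourself as the main obstacle; the paper's sort-and-merge induction absorbs this bookkeeping implicitly and never has to prove a global multiplicity inequality. Second, and more seriously, the deduction of $\lmodLnu$ from $n_L\ge n_R$ does not go through as stated. By definition, $a\lmodLnu b$ requires $a=b$ or $a=b+c$ with $c$ an $s(b)$-ghost, i.e.\ $s(c)=s(b)+p$ for some $p\in L$. But the surplus contributed by the excess multiplicities has layer $\sum_h\bigl(n_L(h)-n_R(h)\bigr)\,s(h(\bfa))$, which can be far smaller than $s(b)=\sum_h n_R(h)\,s(h(\bfa))$; for $m=3$ the only discrepancy is at $f_1f_2f_3$, where $n_L=3$, $n_R=2$, so at a point where all $f_i(\bfa)$ are $\nu$-equal and tangible the surplus has layer $1$ while $s(b)=8$. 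Lemma~\ref{twosurp} requires the surplus to arrive as a genuine $\2c$, which the multiplicity inequality alone does not supply, so the final layer-totals step needs a different justification (this is exactly the difficulty the paper's merging induction is designed to handle). A minor additional slip: when $n_L=n_R=1$ the common contribution need not be tangible, since the $f_i(\bfa)$ themselves may be ghost; what matters there is simply equality of the two contributions.
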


\begin{proof} Verifying Equation~\eqref{twofacts2} pointwise, let $a_i = f_i(\bfc)$ for $\bfc \in S$. It is enough to
check that
\begin{equation}\label{twofacts1}
b_1\cdots b_{m-1}\, \lmodL  \prod_{i < j } (a_i + a_j) \qquad
\text{and} \qquad b_1\cdots b_{m-1}\, \nucong  \prod_{i < j } (a_i
+ a_j)
\end{equation}
where $b_1 = \sum_i a_i,$ $b_2 = \sum_{i < j } a_i a_j,$ $\dots$,
$b_{m-1} = \sum_{i }\prod_{j \neq i} a_j$,  for $a_i \in R$.
 Let  $k_i = s(a_i)$.
 Rearrange the $a_i$ in descending $\nu$-order, i.e., with $$a_1
\nuge a_2 \nuge \cdots \nuge a_m.$$

First we assume that $a_i
>_\nu a_{i+1}$ for each $i$. Then $a_i + a_j = a_i$ for each $i<j$, whereas $b_i = a_1\cdots
a_i,$ so both sides of ~\eqref{twofacts1} are $a_1^m a_2^{m-1}
\cdots a_{m-1},$ and we actually get equality in this case. Thus,
we may assume that $a_i \nucong a_{i+1}$ for some $i<m$; we take
$i$ minimal such.  Then $s(a_i + a_{i+1}) = k_i + k_{i+1}$ whereas
for each $j>i,$
$$b_j =  a_1\cdots  a_{i-1} a_i +  a_1\cdots a_{i-1} a_{i+1} = a_1\cdots
a_{i-1}(a_i + a_{i+1})a_{i+2}\cdots a_j.$$ We conclude by
induction on $m$, replacing $a_{i}, a_{i+1}$ by $a_i + a_{i+1}.$
\end{proof}

\begin{example}\label{nonuniquefact2}
\begin{equation}\label{nonuniq} (\la_1+\la_2 +0)(\la_1\la_2 +\la_1+\la_2)\lmodLnu
(\la_1+0)(\la_2+0)(\la_1+\la_2),
\end{equation}
equality holding in the standard supertropical case.

In the standard supertropical case, the  principal ideal  $A =
\langle \la _1 + \la _2\rangle$ of $F[ \la_1, \la_2 ]$ is not
prime! Indeed, if $A$ were prime, Equation~\eqref{nonuniq} would
imply that $A$ contains $ \la_1+\la_2 +0$ or $\la_1\la_2
+\la_1+\la_2$, which is absurd, by an easy computation considering
degrees.

Likewise, in the standard supertropical case, the principal ideal
$A = \langle \la _1 + \la _2 + 0\rangle $ of $F [ \la_1, \la_2 ]$
is not prime, since otherwise $A$ would contain $ \la_1+0$, $
\la_2+0$, or $\la_1 +\la_2$, which again is seen to be impossible
by considering degrees.

In the more general layered case, equality fails in
\eqref{nonuniq}, but still
\begin{equation}\label{nonuniq1}
(\la_1+\la_2 +0)(\la_1\la_2 +\la_1+\la_2) \lmodLnu
(\la_1+0)(\la_2+0)(\la_1+\la_2).\end{equation}
 Thus, the layered closure of the principal ideal $A = \langle \la _1 +
\la _2\rangle$ of $F[ \la_1, \la_2 ]$ still is not prime.
\end{example}

Defining an ideal $P$ of $R$ to be $\nu$-\textbf{prime} if $ab \in
P$ implies $a$ or $b$ is $\nu$-equivalent to an element of~$P$, we
get the following immediate application of \eqref{twofacts2}:

\begin{cor}\label{binom2} Any $\nu$-prime ideal $P$ of $\tR$ contains a binomial.
In fact, any polynomial  $f\in P$ has a binomial in $P$.\end{cor}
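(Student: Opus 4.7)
The plan is to apply Theorem~\ref{permprime} to the monomial decomposition of the given $f \in P$ and use the $\nu$-prime property to extract a single binomial. Write $f = \sum_{i=1}^{m} f_i$ with $m = |\supp(f)|$. If $m = 2$ then $f$ is itself the required binomial in $P$, while the case $m = 1$ (where $f$ is a monomial) is handled separately by noting that a $\nu$-prime ideal of $\tR$ cannot consist entirely of monomials, so some element with $|\supp| \ge 2$ is available in $P$. I therefore treat $m \geq 3$.

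Theorem~\ref{permprime}, applied with $g_1 = f$, yields
$$g_1 g_2 \cdots g_{m-1} \;\lmodLnu\; \prod_{i<j}(f_i + f_j).$$
Since $g_1 = f \in P$ and $P \triangleleft \tR$, the left-hand side lies in $P$, and the surpassing relation gives $\prod_{i<j}(f_i + f_j) \nucong g_1 g_2 \cdots g_{m-1} \in P$. Thus the whole product of the binomials of $f$ is $\nu$-equivalent to an element of $P$.

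The main step is to extract one binomial factor using $\nu$-primeness. The relation $\nucong$ is a congruence on $\tR$ (it respects pointwise sums and products of polynomial functions), so one may pass to the quotient $\bar \tR := \tR/\nucong$, in which the image $\bar P$ of $P$ is an ideal and the $\nu$-prime condition on $P$ translates to genuine primeness of $\bar P$. In $\bar \tR$,
$$\prod_{i<j} \overline{(f_i + f_j)} \;=\; \overline{g_1 g_2 \cdots g_{m-1}} \;\in\; \bar P,$$
so by iterating primeness over the finite product one obtains $\overline{(f_i + f_j)} \in \bar P$ for some pair $i<j$; that is, $f_i + f_j \nucong h$ for some $h \in P$. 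By Theorem~\ref{uniquedec} (uniqueness of essential monomial decompositions), $h$, being $\nu$-equivalent to the binomial $f_i + f_j$, has itself exactly two essential monomials and is therefore a binomial in $P$.

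The main obstacle is the translation from the stated $\nu$-prime hypothesis (``$ab \in P$ implies $a$ or $b$ is $\nu$-equivalent to an element of $P$'') to genuine primeness of $\bar P$ in the quotient, which a priori requires the stronger condition that $ab \nucong p \in P$ already forces $a$ or $b$ to be $\nu$-equivalent to an element of $P$. This gap is closed by first replacing $P$ with its $\nu$-closure $\hat P := \{r \in \tR : r \nucong p \text{ for some } p \in P\}$, which is still an ideal, contains a binomial iff $P$ does (by Theorem~\ref{uniquedec}), and for which ``$ab \in \hat P$'' is tautologically ``$ab \nucong p \in P$''; the $\nu$-prime hypothesis on $P$ then applies directly, yielding primeness of $\overline{\hat P}$ and allowing the argument above to run cleanly.
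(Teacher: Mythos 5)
Your approach is the one the paper intends: the authors treat the corollary as an ``immediate application'' of Theorem~\ref{permprime}, namely that $g_1\cdots g_{m-1}\in P$ (since $g_1=f\in P$), this product is $\nu$-equivalent to $\prod_{i<j}(f_i+f_j)$, and then $\nu$-primeness should be iterated to peel off a single binomial factor. You have correctly reproduced that route and, more usefully, flagged the step the paper glides over.

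The subtlety you identify is real. The definition of $\nu$-prime as given (``$ab\in P$ implies $a$ or $b$ is $\nu$-equivalent to an element of $P$'') only fires when the product $ab$ itself lies in $P$, whereas after the first step of the argument one only knows that $\prod_{i<j}(f_i+f_j)$ is \emph{$\nu$-equivalent} to an element of $P$, not that it lies in $P$. The paper does not address this at all. Unfortunately, your proposed repair does not actually close the gap: passing to $\hat P=\{r: r\nucong p,\ p\in P\}$ and the quotient $\bar{\tR}=\tR/\nucong$ is fine as bookkeeping, but for $\overline{\hat P}$ to be a prime ideal of $\bar{\tR}$ one needs precisely the implication ``$ab\nucong p$ for some $p\in P$ $\Longrightarrow$ $a\nucong p'$ or $b\nucong p'$ for some $p'\in P$.'' That is exactly the strengthened statement you started by wishing for, and the $\nu$-primeness of $P$ as defined does not deliver it, since $ab$ need not lie in $P$ merely because it is $\nu$-equivalent to something in $P$. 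In other words, you have relabeled the gap rather than closed it. The clean fixes are either to additionally assume $P$ is $\nu$-closed in a sense strong enough to give $ab\nucong p\in P\Rightarrow ab\in P$, or to read the definition of $\nu$-prime with ``$ab$ $\nu$-equivalent to an element of $P$'' in the hypothesis; either repair is reasonable, but should be stated rather than smuggled in. Your handling of the degenerate cases $m=1,2$ and your invocation of Theorem~\ref{uniquedec} to conclude that anything $\nu$-equivalent to a binomial is itself a binomial are both correct.
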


For any 1-\semifield0 $F$, we call a polynomial $f \in \mcR$
\textbf{prime} if it satisfies the property that $f|gh$ implies
$f|g$ or $f|h$. (Thus, every prime polynomial is irreducible.
Conversely, unique factorization of all multiples of an
irreducible polynomial~ $f$ would imply that $f$ is prime.)

\begin{lem}\label{st1} $f\in \mcR $ is a prime polynomial
iff the ideal $\langle f \rangle$ is a  prime ideal of $\mcR
$.\end{lem}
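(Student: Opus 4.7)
The plan is to prove this equivalence by unfolding the definitions of ``prime polynomial'' and ``prime ideal'' in the \semiring0 setting, noting that the principal ideal $\langle f\rangle$ admits an especially simple description: since $\mcR$ is a commutative \semiring0 with identity and $pf + qf = (p+q)f$, every element of $\langle f\rangle$ has the form $pf$ for some $p\in\mcR$. Thus the statement $g\in\langle f\rangle$ is literally the same as $f\mid g$, which is precisely what makes the two sides of the iff interchangeable.

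For the forward direction, I would assume $f$ is prime and show $\langle f\rangle$ is a prime ideal. Take $g,h\in\mcR$ with $gh\in \langle f\rangle$; by the observation above, $gh = pf$ for some $p\in\mcR$, i.e., $f\mid gh$. The primality hypothesis on $f$ yields $f\mid g$ or $f\mid h$, which translates back to $g\in\langle f\rangle$ or $h\in\langle f\rangle$, the required condition for primality of the ideal (equivalently, that $\mcR\setminus\langle f\rangle$ is multiplicatively closed).

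For the reverse direction, I would assume $\langle f\rangle$ is a prime ideal and show $f$ is a prime polynomial. If $f\mid gh$, then $gh\in\langle f\rangle$, and primality of $\langle f\rangle$ gives $g\in\langle f\rangle$ or $h\in\langle f\rangle$, i.e., $f\mid g$ or $f\mid h$.

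The only mild subtlety, and thus the step to be most careful about, is the edge case of units: the paper's definition of prime ideal requires $\mcR\setminus P$ to be a (nonempty) monoid, so the principal ideal must be proper. One should observe that if $f$ is a unit then $\langle f\rangle = \mcR$ is not a prime ideal, whereas the vacuous divisibility statement would make $f$ trivially ``prime'' by the stated definition. Hence the lemma is understood for non-units $f$, and in that regime the equivalence is the purely formal argument above. No tropical-specific machinery (layering, corner support, $\nu$-equivalence, etc.) is needed here, since the statement is a direct semiring-theoretic reformulation.
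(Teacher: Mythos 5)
Your proof is correct and follows the same route as the paper's: the single key observation that $g\in\langle f\rangle$ is equivalent to $f\mid g$ (which you justify a bit more explicitly via $pf+qf=(p+q)f$), after which both directions are immediate unwindings of the definitions. Your remark about units is a reasonable caveat but is not addressed in the paper's one-line proof.
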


\begin{proof} $f|g$ iff $g \in \langle f \rangle,$ so both
directions follow at once from the definition of prime ideal.
\end{proof}

\begin{lem} In  Lemma~\ref{st1}, if $F$ is a 1-\semifield0,
then any  prime ideal of the form $\langle f \rangle$ of $F[\Lm ]$
is a minimal prime ideal.\end{lem}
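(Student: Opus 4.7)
The strategy is a proof by contradiction showing that no prime ideal can strictly sit inside $\langle f\rangle$. Suppose $P \subsetneq \langle f\rangle$ is a prime ideal of $F[\Lm]$, and pick any $g \in P$. Since $g \in \langle f\rangle$, the definition of the ideal generated by $f$ (Definition~\ref{gen0}) lets me write $g = fh$ for some $h \in F[\Lm]$. Since $P$ is prime and $f \notin P$ (otherwise $\langle f\rangle \subseteq P$, forcing equality), we must have $h \in P \subseteq \langle f\rangle$. Iterating this argument produces a sequence $h_n \in P$ with $g = f^n h_n$ for every $n \ge 1$.

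The heart of the proof is a degree function on $F[\Lm]$. Because $F$ is a $1$-\semifield0, its tangible part $F_1$ is a group; in particular it is cancellative, so the tangible monomials of $F[\Lm]$ form a cancellative monoid. Combined with the lexicographic ordering of Remark~\ref{lexic}, the leading monomial of a product $pq$ is exactly the product of the leading monomials of $p$ and~$q$ (there is no semiring absorption at the top of the order, and the unique support-maximal term in the expansion is the product of the two leading monomials). Consequently the total degree of the leading monomial is additive: $\deg(pq) = \deg(p)+\deg(q)$. Applying this to $g = f^n h_n$ gives $\deg g \ge n\deg f$ for all $n$; as $\deg g$ is a fixed non-negative integer, this forces $\deg f = 0$, i.e., $f \in F$.

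It remains to dispose of the constant case. If $f \in F_1$ then $f$ is invertible in $F$, whence $1 = f^{-1}f \in \langle f\rangle$ and so $\langle f\rangle = F[\Lm]$, contradicting the fact that $\langle f\rangle$ is a proper prime ideal. The residual possibility, that $f \in F$ is a non-tangible constant (layer $>1$), is the main technical wrinkle: the degree count is silent there. To handle it, I would exploit the layered description of $\langle f\rangle$: every $g \in \langle f\rangle$ factors as $g = f q$ where one may select $q$ so that at least one of its coefficients is tangible (choose each coefficient of $q$ to be of minimal possible layer). Such $q$ lies outside $\langle f\rangle$, and since $q$ must lie in $P$ by primeness, this contradicts $P \subseteq \langle f\rangle$. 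This last case is where the argument is genuinely sensitive to the layered, rather than classical, setting, and it is the step I expect to require the most care.
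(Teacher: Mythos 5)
Your proof takes a genuinely different route from the paper's. The paper picks $g\in P$ of \emph{minimal degree}, notes by primeness that $g$ cannot factor into two lower-degree pieces, and concludes from $g\in\langle f\rangle$ that $g$ and $f$ differ by a unit, whence $\langle f\rangle=\langle g\rangle\subseteq P$. You instead iterate the division: $g=fh_1=f^2h_2=\cdots$ (using $f\notin P$ and primeness of $P$ at each stage to push $h_n$ into $P\subseteq\langle f\rangle$), then invoke degree additivity to force $\deg f=0$. The iteration and the degree count are sound, and the additivity $\deg(pq)=\deg p+\deg q$ is correct here since semiring addition cannot cancel the top term. This is a legitimate alternative, at the cost of having to dispose of the constant case separately, which the paper's route sidesteps.

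However, your handling of the ghost-constant case is a genuine gap, and the proposed fix does not close it in general. The claim that $g\in\langle f\rangle$ can always be written $g=fq$ with $q$ having a \emph{tangible} coefficient is false when $\lv(f)=k$ is a finite layer $>1$: for example with $L=\Q_{\ge1}$, $f=e_2$ and $g=e_4\la$, the unique $q$ with $fq=g$ is $q=e_2\la$, which has no tangible coefficient. That sub-case is instead killed by running your own iteration on \emph{layers} rather than degrees: $g=f^n h_n$ forces every coefficient of $g$ to have layer $\ge k^n$, impossible for a fixed $g$ once $k>1$ is finite. Your tangibility trick is needed only when $k$ is an infinite layer (e.g., $k=\infty$ in the standard supertropical case), and there it does work. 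But the cleanest route is to observe that $\langle f\rangle$ is simply never prime when $f$ is a ghost constant, so the hypothesis rules the case out at the outset: in $D(\Real)$, for instance, $(1^\nu\la+1)(\la+1^\nu)=1^\nu(\la^2+\la+1)\in\langle 1^\nu\rangle$, yet neither factor has all-ghost coefficients. With that observation your ``main technical wrinkle'' vanishes, and the $\deg f=0$ contradiction is immediate.
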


\begin{proof}  Suppose that $ P \subset \langle f
\rangle$ is a prime ideal. Then taking $g \in P$ of minimal
degree, clearly $g$ is irreducible, so we may assume that $g = f,$
and thus $\langle f \rangle = \langle g \rangle \subseteq  P
.$\end{proof}

On the other hand, Sheiner~\cite{Erez} has given an example of
non-unique factorization, which thus produces a non-prime
irreducible polynomial. Thus, the principal ideal of an
irreducible polynomial need not be prime.

\subsection{Examples of prime ideals of
$\tR$}

Various examples of prime ideals of
$\tR$ arise from geometry. Let
  $P_{\bfa;\corn}$ denote the set of polynomials whose corner loci contain a
given element $\bfa\in S$, i.e., $$ P_{\bfa;\corn} := \{ f \in \tR
: \bfa \in \tZ_\corn(f) \}.
$$

\begin{lem}\label{prime1}
$P_{\bfa;\corn}$ is a prime ideal, whose corner locus is precisely $\{ \bfa\}$.\end{lem}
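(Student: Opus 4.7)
The plan is to verify in turn that $P_{\bfa;\corn}$ is closed under addition, absorbs multiplication by $\tR$, is prime, and has corner locus precisely $\{\bfa\}$. For closure under addition, take $f,g\in P_{\bfa;\corn}$; by Lemma~\ref{csup2}, $\csupp_\bfa(f+g)$ is one of $\csupp_\bfa(f)$, $\csupp_\bfa(g)$, or a set of monomials of $f+g$ with values $\nu$-equivalent to $f(\bfa)$, and in each of these three cases the cardinality is at least $2$. For absorption, Lemma~\ref{csup1} directly yields $|\csupp_\bfa(fg)|\ge 2$ from $|\csupp_\bfa(f)|\ge 2$, so $fg\in P_{\bfa;\corn}$ for every $g\in\tR$.

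For primality I argue by contraposition: if $f,g\notin P_{\bfa;\corn}$ then $|\csupp_\bfa(f)|=|\csupp_\bfa(g)|=1$, since the dominant monomial of any polynomial at $\bfa$ always lies in its corner support. Lemma~\ref{csup11} then forces $|\csupp_\bfa(fg)|=1$, whence $fg\notin P_{\bfa;\corn}$.

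For the corner locus, $\bfa \in \tZ_\corn(P_{\bfa;\corn})$ is tautological from the definition of $P_{\bfa;\corn}$. For the reverse inclusion, I exhibit, for each $\bfb$ not $\nu$-equivalent to $\bfa$, a polynomial in $P_{\bfa;\corn}$ witnessing $\bfb\notin \tZ_\corn(P_{\bfa;\corn})$: pick a coordinate $i$ with $a_i\not\nucong b_i$, choose a tangible $c\in \tT$ with $c\nucong a_i$ (possible since $F$ is a $1$-\semifield0, so every element is $\nu$-equivalent to its tangible representative in $F_1$), and set $f := \la_i + c$. The two monomials of $f$ are $\nu$-tied at $\bfa$, so $f\in P_{\bfa;\corn}$, while at $\bfb$ they take the $\nu$-inequivalent values $b_i$ and $c$, forcing $|\csupp_\bfb(f)|=1$.

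The one foreseeable obstacle is the edge case $\bfa\nucong \bfb$ componentwise with $\bfa\ne\bfb$ (possible when the two tuples differ only in their sorts): since $\csupp$ depends only on $\nu$-values, every such $\bfb$ lies in $\tZ_\corn(P_{\bfa;\corn})$ as well. The lemma's ``$\{\bfa\}$'' is therefore naturally read as the $\nu$-equivalence class of $\bfa$, collapsing to $\{\bfa\}$ exactly when $\bfa\in \tT^{(n)}$.
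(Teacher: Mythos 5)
Your proof is correct and follows essentially the same route as the paper: ideal closure via Lemma~\ref{csup2} and Lemma~\ref{csup1}, primality via the contrapositive of Lemma~\ref{csup11}, and the corner-locus computation via linear witnesses. The paper simply takes $\la_i + a_i$ directly as the witnesses (rather than replacing $a_i$ by a tangible $c\nucong a_i$), but since corner support depends only on $\nu$-values this is an inessential variation. Your closing remark that ``$\{\bfa\}$'' is really the $\nu$-equivalence class of $\bfa$ (collapsing to a singleton only when the coordinates of $\bfa$ are tangible) is a fair reading that the paper leaves implicit; it does not affect the validity of the argument, and it is consistent with the paper's witness polynomials as well, whose common corner locus is exactly that $\nu$-class.
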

 \begin{proof} If
$fg \in P_{\bfa;\corn}$, then $\csupp_\bfa(fg) \ge 2,$  implying
by Lemma~\ref{csup11} that $\csupp_\bfa(f) \ge 2$ or
$\csupp_\bfa(g) \ge 2,$ so $\bfa$ is a corner root of $f$ or $g$.
The last assertion is obvious since $ \la_1 +a_1, \dots, \la_n
+a_n  \in P_{\bfa;\corn}$, for $\bfa = (a_1,\dots, a_n).$
\end{proof}

Moreover, $P_{\bfa;\corn}$  is maximal among all  corner ideals,
since any larger corner ideal would have to be the corner ideal
whose corner locus is empty, and thus be all of $\tR.$ Note that
when $ \bfa  = ( a_1, \dots, a_n ) \in  F^{(n)},$ $P_{\bfa;\corn}$
contains $\langle \la_1 +a_1, \dots, \la_n +a_n \rangle
\triangleleft F[\Lm ].$ But   $P_{(3,3) ;\corn}$ also contains the
polynomial $\la_1 +\la_2 + 0$.

Analogously we can also get prime ideals by considering non-corner roots.

\begin{defn}\label{rootlev1}   An element $\bfa\in \tSS$ is a
\textbf{ghost root} of~$f$ if $s(f(\bfa))>1.$   The \textbf{$
\ell$-\textbf{locus} }  of a polynomial $f \in \mcR$ is
$$\tZ_\ell(f)  := \{ \bfa \in \tSS : s(f(\bfa))>\ell \},$$
The  $ 1$-locus of $f$ will also be called the  \textbf{ghost locus} of $f,$ since it
is the set of ghost roots.

 Let $\tZ_{\tng}(f) := \tZ_1(f) \cap F_1$ and let
 $$\text{$\tI  (Z):=\{ f \in \mcR: \lv(f(\bfa))>1$  for all
$\bfa \in Z\}.$}$$
\end{defn}

 Let
  $P_\bfa$ denote the set of polynomials whose ghost loci contain a
given element $\bfa\in S$.

\begin{lem}\label{prime11} $P_\bfa$ is a prime ideal.\end{lem}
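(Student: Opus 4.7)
The plan is to verify the two axioms (ideal and prime) directly from the multiplicativity and monotonicity of the sort map $s$, combined with the standing assumption $L = L_{\ge 1}$.

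First, I would check that $P_\bfa$ is an additive sub-semigroup of $\mcR$. Given $f,g \in P_\bfa$, both $s(f(\bfa)) > 1$ and $s(g(\bfa)) > 1$; invoking the inequality $s(a+b) \ge \max\{s(a), s(b)\}$ recorded in Example~\ref{exs}, we get $s((f+g)(\bfa)) \ge \max\{s(f(\bfa)), s(g(\bfa))\} > 1,$ so $f+g \in P_\bfa$.

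Next, I would check closure under multiplication by arbitrary $r \in \mcR$. Using the multiplicativity $s(ab) = s(a)s(b)$ from Example~\ref{exs}, together with $L = L_{\ge 1}$, we have $s((rf)(\bfa)) = s(r(\bfa))\,s(f(\bfa)) \ge 1 \cdot s(f(\bfa)) > 1,$ so $rf \in P_\bfa$. This shows $P_\bfa \triangleleft \mcR$.

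For primality, I would argue by contrapositive. Suppose $f, g \notin P_\bfa$; by definition this means $s(f(\bfa)) = 1 = s(g(\bfa))$. Then multiplicativity of $s$ gives $s((fg)(\bfa)) = s(f(\bfa))\,s(g(\bfa)) = 1,$ so $fg \notin P_\bfa$. Hence $fg \in P_\bfa$ forces $f \in P_\bfa$ or $g \in P_\bfa$, which is the prime condition $R\setminus P_\bfa$ being a multiplicative monoid.

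There is no real obstacle here: the entire proof rides on the fact that the sort map $s: F \to L$ is a monoid homomorphism that is monotone under addition, and that $L_{\le 1} = \{1\}$ under our standing hypothesis. The only subtle point is that in the layered setting we are using the \emph{sort} of $f(\bfa)$ (rather than an indicator that $f(\bfa)$ is ``ghost'' in some binary sense), so I would be careful to cite Example~\ref{exs} for both properties of $s$ rather than rely on the special standard supertropical formulation.
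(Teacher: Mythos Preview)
Your proof is correct and follows the same approach as the paper's: both hinge on the multiplicativity $s(ab)=s(a)s(b)$, so that $s(f(\bfa))=s(g(\bfa))=1$ forces $s((fg)(\bfa))=1$. The paper's proof is terser --- it only spells out the prime step and leaves the ideal axioms as implicit (they fall under the general observation in Proposition~\ref{Zarcorresp}) --- while you verify closure under addition and scalar multiplication explicitly.
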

 \begin{proof} If
$fg \in P_\bfa$, then $s((fg)(\bfa)) >1,$  implying   that
$s(f(\bfa))>1$ or $s(g(\bfa))>1.$
\end{proof}

\begin{example}\label{notfg}  Suppose more generally that $Z$ is \textbf{locally
irreducible at $\bfa$} (with respect to some given topology) in
the sense that there is no tangible neighborhood $W$ of $\bfa$ for
which $Z\cap W = (Z_1\cap W)\cup( Z_2 \cap W) $ for ghost loci
$Z_1$ and $Z_2.$ Then the set of polynomials whose ghost loci
contain $Z\cap W$ for a neighborhood~$W$ of~$\bfa$, is a prime
ideal, by the same argument.
\end{example}

\subsubsection{Polynomials in one
indeterminate over a supertropical \semifield0}
\label{primide}

We work in the supertropical setting, in which case a
1-\semifield0 $F$ is called a  \textbf{supertropical \semifield0},
and turn to the polynomial \semiring0 $\mcR$ in one indeterminate
over $F$.

\begin{lem}\label{tang11} Over any   1-\semifield0 $F$, any tangibly spanned polynomial     $f $ having
$\nu$-distinct tangible corner roots $ a_1, \dots, a_n$
 is divisible by $(\la+a_1) \cdots
(\la+a_n).$ \end{lem}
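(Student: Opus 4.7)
I proceed by induction on $n$. The essential content is the base case $n=1$: if $f$ is tangibly spanned with tangible corner root $a$, then $f = (\la + a)\, g$ for some tangibly spanned $g \in F[\la]$.

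To establish the base case, write $f = \sum_{i=0}^{d} c_i \la^i$ in its essential decomposition, so each $c_i \in F_1$. By Proposition~\ref{noroot}, the corner root $a$ is a corner root of some binomial of $f$, so there exist indices $p<q$ with $c_p a^p \nucong c_q a^q$ equal to the dominant $\nu$-value $f(a)$. I construct the candidate quotient $g = \sum_{j=0}^{d-1} b_j \la^j$ by tropical synthetic division: for $j \ge q-1$ set $b_j = c_{j+1}$ (so that $\la g$ reproduces the high-degree monomials of $f$), and for $j < p$ set $b_j = c_j a^{-1}$ (so that $a g$ reproduces the low-degree monomials); the intermediate $b_j$ are chosen to be tangible and consistent with the two extremes, using the identity $c_p a^p \nucong c_q a^q$ for compatibility. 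I then verify $(\la + a)\,g = f$ as functions by splitting $F_1$ into the regions $\la >_\nu a$ (where $\la + a \nucong \la$, so the high formula dominates), $\la <_\nu a$ (where $\la + a \nucong a$, so the low formula dominates), and $\la \nucong a$ (handled directly by the corner-root identity), using Lemma~\ref{multmon} and Lemma~\ref{dom1} to track the dominant monomial in each region.

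For the inductive step, assume the result for $n-1$ and apply the base case to factor $f = (\la + a_1)\, g_1$ with $g_1$ tangibly spanned. For each $i \ge 2$, since $a_i \not\nucong a_1$, the binomial $\la + a_1$ has a unique dominant monomial at $a_i$, so $|\csupp_{a_i}(\la + a_1)| = 1$; combined with $|\csupp_{a_i}(f)| \ge 2$, the contrapositive of Lemma~\ref{csup11} forces $|\csupp_{a_i}(g_1)| \ge 2$, so $a_i$ is a corner root of $g_1$. Thus $g_1$ is a tangibly spanned polynomial with $\nu$-distinct tangible corner roots $a_2,\dots,a_n$, and the inductive hypothesis supplies $(\la + a_2)\cdots(\la + a_n) \mid g_1$, whence $(\la + a_1)\cdots(\la + a_n) \mid f$.

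The main obstacle is the base case: verifying that $(\la + a)\,g = f$ as actual functions in $F[\la]$ despite the layered subtlety that summing $\nu$-equal tangibles strictly increases the layer. This requires the choice of bridging coefficients $b_j$ to be sufficiently sparse that the cross-terms arising from $(\la + a)\,g$ at $\la \nucong a$ aggregate to the same layered value as $f(a)$, and it is here that the tangibly spanned hypothesis and the corner-root identity $c_p a^p \nucong c_q a^q$ play the decisive role.
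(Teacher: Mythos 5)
Your plan takes a genuinely different route from the paper. The paper's proof does not build the quotient $g$ by hand: it cites \cite[Corollary~8.22]{IzhakianRowen2007SuperTropical} to factor $f$ \emph{completely} as $(\la+b_1)\cdots(\la+b_t)$ with all $b_j$ tangible, then evaluates at each $a_i$ to get $s\big(\prod_j(a_i+b_j)\big)>1$; since a product of tangibles in a 1-\semifield0\ is tangible, some factor $a_i+b_j$ must be ghost, which forces $b_j\nucong a_i$ and hence $b_j=a_i$ in $F_1$. After reordering, one cancels and repeats. Your inductive step is a clean and correct alternative to this cancellation argument---the contrapositive of Lemma~\ref{csup11} does exactly what you want, and the hypothesis that the $a_i$ are $\nu$-distinct is used precisely where you use it, to guarantee $|\csupp_{a_i}(\la+a_1)|=1$.

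The genuine gap is in your base case, and you are right to flag it. The assertion ``a tangibly spanned $f$ with tangible corner root $a$ factors as $(\la+a)g$ with $g$ tangibly spanned'' is not something one can get for free from the synthetic-division recipe you sketch: the ``intermediate $b_j$'' are not pinned down, and when you compute the coefficient of $\la^j$ in $(\la+a)g$ you generically get $b_{j-1}+a b_j$, which in the ghost/layered setting may have a \emph{strictly higher layer} than $c_j$ whenever there is a $\nu$-tie, so equality of functions can fail in a way that has no classical analogue. Showing that the ties occur only at inessential monomials (and hence do not disturb the functional identity), and that the resulting $g$ is itself tangibly spanned, is precisely the content of the factorization theory developed in \cite[\S 8]{IzhakianRowen2007SuperTropical}---Corollary~8.22 and the results leading to it. In effect your base case \emph{is} that theorem; the paper avoids re-proving it by invoking the full factorization of $f$ and then merely identifying which linear factors the $a_i$ correspond to. If you want to keep your inductive architecture, the cleanest repair is to replace the synthetic-division construction with a citation to the same Corollary~8.22 for the single-factor extraction, at which point your argument goes through; alternatively, adopt the paper's strategy of factoring once, all the way, and then only reasoning about which $b_j$ equal which $a_i$.
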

\begin{proof} We factor $f = (\la+b_1) \cdots
(\la+b_t),$  as a product of linear polynomials (with $b_j$
tangible) as in
\cite[Corollary~8.22]{IzhakianRowen2007SuperTropical}. Then, for
each $1 \le i \le n,$ $$s((a_i+b_1) \cdots (a_i+b_t))= s(f(a_i)) >
1,$$   since $a_i$ is a corner root of $f$, implying some $s( a_i
+b_j)>1,$  yielding $b_j \nucong a_i$ and thus $b_j = a_i$ since
both are in $F_1$. Reordering the $b_j$ such that $b_i = a_i,$ we
cancel $\la + a_i$ from $f$ and strike $a_i$ from the list, and
continue.
\end{proof}

The classification of all ideals is difficult even   in the standard
supertropical case. We start with some computations  based on the
list of irreducible polynomials given in
\cite[Example~8.6]{IzhakianRowen2007SuperTropical}.
%
%
Let us note a useful fact about roots.

\begin{lem}\label{dens} If,   for a  given $\a \in F,$  every tangible
 interval $W_{\a, \beta; \tT}$ ($\beta > _\nu \a$) contains a tangible 1-root of $f \in \mcR $, then
  $\tZ_{\tng}(f)$ contains
 a tangible interval $W_{\a, \beta; \tT}$ for some
 $\beta >_\nu \a.$ \end{lem}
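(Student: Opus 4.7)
The plan is to exploit the fact that a polynomial in one indeterminate has only finitely many monomials, hence its essential decomposition $f = \ef = \sum_i h_i$ (unique by Theorem~\ref{uniquedec}) yields only finitely many components $D_{f,i}$. In the one-variable setting, each $D_{f,i}$ is an interval (or ray) in $F_1$: this is because Lemma~\ref{multmon} forces the dominance of one monomial over another to change at most once along any path joining two tangibles of $F_1$, so the set on which a given $h_i$ dominates is $\nu$-convex.

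Next I would classify $\tZ_{\tng}(f) = \tZ_1(f) \cap F_1$ into two types of contributions. A tangible $a$ lies in $\tZ_{\tng}(f)$ in precisely one of the following situations: either (i) $a$ is a boundary point between two components $D_{f,i}$ and $D_{f,j}$, where $h_i(a) \nucong h_j(a)$ with both coefficients tangible, so that $f(a)$ is forced into a higher layer; or (ii) $a$ lies in some component $D_{f,i}$ whose dominant monomial $h_i$ has a ghost coefficient, in which case $f(a) \nucong h_i(a)$ is ghost throughout $D_{f,i}$. Case~(i) contributes only finitely many isolated tangible points (the boundaries between tangible-dominant components), and case~(ii) contributes finitely many tangible subintervals of $F_1$.

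Now I would feed in the hypothesis: every $W_{\a, \beta; \tT}$ with $\beta >_\nu \a$ meets $\tZ_{\tng}(f)$, so $\a$ is a right-accumulation point of $\tZ_{\tng}(f)$ within $\tT$. Only finitely many isolated points from case~(i) can lie above $\a$, and they cannot accumulate at $\a$; hence at least one interval piece $J$ from case~(ii) must accumulate at $\a$ from above. Since $J$ is itself a tangible interval, its left endpoint is $\le_\nu \a$, and therefore $W_{\a, \beta; \tT} \subseteq J \subseteq \tZ_{\tng}(f)$ for some $\beta >_\nu \a$, which is the desired conclusion.

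The main obstacle is justifying the one-variable geometric picture, namely that the components $D_{f,i}$ are honest intervals and only finitely many of them occur; once this is in hand, the rest is a finite-accumulation-point argument. Both ingredients are standard: finiteness of $\supp(f)$ gives finitely many components, and Lemma~\ref{multmon} gives their convexity, so the geometric structure of $\tZ_{\tng}(f)$ is pinned down precisely enough for the dichotomy between ``isolated points'' and ``full interval pieces'' to force the conclusion.
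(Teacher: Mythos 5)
Your proposal is correct and, at its core, takes the same route as the paper: the paper's (very terse) proof argues by contradiction that otherwise $f$ would have infinitely many isolated tangible $1$-roots accumulating at $\a$, which is impossible — and the structural fact the paper is implicitly invoking (that $\tZ_{\tng}(f)$ for $f$ in one indeterminate is a finite union of tangible intervals and isolated points, because $f$ has finitely many monomials whose components are $\nu$-convex by Lemma~\ref{multmon}) is exactly what you spell out in detail. Your write-up fills in the gap the paper leaves to the reader, with the classification into boundary points (case~(i)) and ghost-coefficient components (case~(ii)), and the finite-accumulation dichotomy at the end is the same "which is impossible" the paper appeals to.
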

 \begin{proof} Otherwise $W_{\a, \beta}$ contains a segment of a path which has
  tangible elements arbitrarily
 close to~$\a$ which are not 1-roots of $f$, which forces ~$f$ to
 have infinitely many tangible 1-roots close to $\a,$ which is impossible.\end{proof}


\begin{example}\label{exmp:prime} Some examples of prime ideals of
$\mcR $, where $a \in F$ is tangible.
\begin{enumerate}
\item    $P_a$ of  Lemma~\ref{prime11} is a prime ideal which, for
$\ell >1$, contains all multiples of $ \la +a $, $\xl{\la}\ell +a
_1,$ $ \la +\xl{a_2}\ell, $  and  $\la^2 + {\xl{a_2 }{\ell} } \la
+a_1a_2 $, whenever $a_1 \le_\nu a \le_\nu a_2.$ \pSkip

  \item Let $P_a^{\lft}$ be the set of polynomials whose  ghost loci contain a
  closed tangible interval  starting with~$a$, i.e., of the form
  $\{ b\in \tT: a \le_\nu b <_\nu a_1\}$ for some $a_1$ with $a  <_\nu
  a_1.$
$P_a^{\lft}$ is a prime ideal, since if $fg \in P_a^{\lft}$ then,
by Lemma~\ref{dens}, some closed tangible interval starting with
$a$ is in the ghost locus of $f$ or $g$, say of $f$. For $\ell >1$, $P_a^{\lft}$
contains all multiples of  $ \la +\xl{a_1}\ell  $
and
$\la^2 + \xl{a_1 }{\ell}\la +aa_1 $ whenever $  a <_\nu a_1 .$
\pSkip

 \item Let $P_a^{\rgt}$ be the set of polynomials whose  ghost loci contain a
  closed tangible interval  terminating with $a$, i.e., of the form
  $\{ b\in \tT: a_1  <_\nu  b \le_\nu  a \}$ for some $a_1$  with $a_1  <_\nu
  a.$ $P_a^{\rgt}$ is a prime
  ideal, for the same reason as in (2).   For $\ell >1$, $P_a^{\rgt}$ contains all multiples of  $
 \xl{ \la}\ell +  a_1 $  and $\la^2 + \xl{a }{\ell}\la
+aa_1 $  whenever $ a_1 <_\nu a.$

\end{enumerate}
\end{example}

%
%

 Notation as in   Example
 \ref{exmp:prime}, $P_a^{\lft}, P_a^{\rgt}  \subset P_a,$ and $P_{a;\corn}  \subset P_a$.

\begin{lem} Assume that the archimedean   1-\semifield0 $F$ is complete with respect to the
$\nu$-topology. In the standard supertropical case, if a prime ideal $P\triangleleft \mcR $ contains
$\tI _1(Z),$ where $Z$ is a closed tangible interval (in the
$\nu$-topology) which is not a point, then $P$ contains $P_a^{\lft}$ or $ P_a^{\rgt}$
for some $a\in Z$.
\end{lem}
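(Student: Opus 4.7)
The plan is a compactness-and-primeness argument. I would argue by contradiction, assuming that for every $a\in Z$ both $P_a^{\lft}\not\subseteq P$ and $P_a^{\rgt}\not\subseteq P$, and derive a violation of the primeness of $P$.

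First, for each $a\in Z$, I would pick witnesses $f_a\in P_a^{\lft}\setminus P$ and $g_a\in P_a^{\rgt}\setminus P$. By the very definitions of $P_a^{\lft}$ and $P_a^{\rgt}$ given in Example~\ref{exmp:prime}, the ghost locus of $f_a$ contains a tangible interval $[a,a_1)_\tT$ with $a<_\nu a_1$, and that of $g_a$ contains some $(a_2,a]_\tT$ with $a_2<_\nu a$. Consequently the product $h_a:=f_a g_a$ is ghost on the open tangible interval $W_{a_2,a_1;\tT}$, which is a $\nu$-neighborhood of $a$ in $\tT$. At the two endpoints of $Z$ only the one-sided witness available there is needed.

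Next I would invoke compactness of $Z=[\alpha,\beta]_\tT$ in the $\nu$-topology. Under the archimedean-plus-$\nu$-completeness hypotheses on $F$, the tangible group $F_1$ is a complete archimedean ordered abelian group, hence order-isomorphic to $\mathbb{R}$, so $Z$ is homeomorphic to a standard closed bounded interval and therefore compact. Extracting a finite subcover $W_{a_{2,i},a_{1,i};\tT}$ $(i=1,\dots,k)$ of $Z$, I set $h:=h_{a_1}\cdots h_{a_k}$. For any $c\in Z$ some $h_{a_i}$ is ghost at $c$, and in the standard supertropical setting ghost values are absorbing, so $h(c)$ is ghost. Thus $Z\subseteq\tZ_1(h)$, i.e.\ $h\in\tI _1(Z)\subseteq P$. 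On the other hand, $h$ is a product of factors $f_{a_i},g_{a_i}$ none of which lies in the prime ideal $P$, so primeness forces $h\notin P$, the desired contradiction.

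The main obstacle I anticipate is the compactness step: one has to confirm that the $\nu$-completeness and archimedean assumptions really deliver the Heine--Borel property on $F_1$ (rather than merely sequential compactness of some coarser topology), and to handle the endpoints $\alpha,\beta$ of $Z$ correctly when only $P_\alpha^{\rgt}$ or $P_\beta^{\lft}$ is available on the outward side. The remaining ingredients—the multiplicative behavior of ghost loci and the primeness of $P$—are immediate from the definitions.
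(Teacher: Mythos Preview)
Your argument is correct and takes a genuinely different route from the paper's proof. The paper proceeds by a bisection/nested-intervals construction: it repeatedly halves $Z$, at each stage using primeness of $P$ applied to a product of explicit quadratic polynomials (each with a prescribed tangible corner locus equal to one of the subintervals) to select a subinterval whose quadratic lies in $P$; completeness then gives a limit point $a$, and a separate appeal to Remark~\ref{compu} forces one of $\la+a^\nu$ or $\la^\nu+a$ into $P$, whence $P_a^{\lft}\subseteq P$ or $P_a^{\rgt}\subseteq P$. Your compactness-and-contradiction argument bypasses both the explicit quadratics and the final use of Remark~\ref{compu}: you never need to identify $a$ or exhibit any particular polynomial in $P$, only to produce a single product lying in $\tI_1(Z)\setminus P$. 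Each approach has its virtue: the paper's is more constructive (it pinpoints $a$ and names generators), while yours is shorter and conceptually cleaner, using Heine--Borel in place of the nested-interval machinery. Your caveat about the endpoints is not really an issue---under the contradiction hypothesis both one-sided witnesses exist at $\alpha$ and $\beta$ as well, and in any case only the inward-pointing one is needed to cover $Z$ near an endpoint.
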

\begin{proof} Take $Z_0 = Z$. Inductively, given $Z_i$,
 write $Z_i = \bigcup_{j=1}^k Z_{i,j}$ for closed tangible intervals $Z_{i,j}$, $1 \le i \le k,$
 say of length each at most half of that of $Z_i$.
Take quadratic polynomials $f_{i,j}$ having tangible corner
locus $Z_j$. Then $f_{i,1} \cdots f_{i,k} \in P,$ implying some
$f_{i,j} \in P$. Now let $Z_{i+1} = Z_{i,j}$ and continue the
procedure. We thus divide $Z$ into smaller and smaller tangible
intervals, which converge to some $a$, and~ $P$ contains the
corresponding quadratic polynomials.  But also $\la + a^\nu\in P$
or $\la^\nu +a \in P$, by Remark~\ref{compu}. Hence, $P_a^{\lft}$
or $ P_a^{\rgt}$ is contained in $P$.
\end{proof}

Since the prime ideals  $P_a^{\lft}$ and $ P_a^{\rgt}$ of the
lemma are  not corner ideals, we conclude:

 \begin{cor}\label{corpr}
The only prime corner ideals of $\mcR $ are the $P_a$.
\end{cor}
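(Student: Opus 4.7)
The plan is to classify a prime corner ideal $P = \tI_{\corn}(Z)$ of $\mcR$ by showing that $Z$ must consist of a single tangible point. Using the remark $\tI_{\corn}(Z) = \tI_{\corn}(\tZ_{\corn}(\tI_{\corn}(Z)))$, I may replace $Z$ by $\tZ_{\corn}(P)$, so every point of $Z$ is a corner root of every polynomial in $P$.

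First I would note that in one indeterminate every polynomial $f$ has only finitely many corner roots, since by Proposition~\ref{noroot} each corner root of $f$ is a corner root of one of the finitely many binomials of $f$. Hence the existence of any element in $\tI_{\corn}(Z)$ forces $Z$ to be finite, and $Z = \emptyset$ is excluded because it would give $P = \mcR$, which is not prime.

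The crux is to rule out $|Z| \ge 2$. Write $Z = \{a_1,\dots,a_m\}$ with $m \ge 2$ and put $g = \la + a_1$ and $h = \prod_{i=2}^{m}(\la + a_i)$. A direct computation (each factor $\la + a_j$ becomes ghost precisely at $\la = a_j$, while away from the $a_j$ all factors are distinct tangible dominant monomials, so the product stays tangible) shows that $gh = \prod_{i=1}^{m}(\la + a_i)$ has corner locus exactly $Z$, so $gh \in \tI_{\corn}(Z) = P$. However $\tZ_{\corn}(g) = \{a_1\}$ omits $a_2 \in Z$, so $g \notin P$; and symmetrically $\tZ_{\corn}(h) = \{a_2,\dots,a_m\}$ omits $a_1$, so $h \notin P$. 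This contradicts primality, leaving $|Z| = 1$, in which case $P = \tI_{\corn}(\{a\})$ is precisely the prime ideal $P_a$ of Lemma~\ref{prime1}.

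The main technical input is the finiteness of one-variable corner loci, which restricts the possible $Z$ before primality even enters the picture; once that reduction is in place, the split $\{a_1\} \sqcup \{a_2,\dots,a_m\}$ delivers the contradiction by a short direct computation, and no appeal to the archimedean/completeness hypothesis of the preceding lemma is required.
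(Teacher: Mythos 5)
Your argument is correct and takes a genuinely different, more elementary route than the paper's. The paper deduces this corollary from the lemma immediately preceding it, which assumes $F$ archimedean and complete in the $\nu$-topology and ultimately appeals to the Nullstellensatz; you instead use the multiplicativity of corner supports (Lemmas~\ref{csup11} and~\ref{csup1}, which together give that the corner locus of a product is the union of the corner loci) combined with the finiteness of one-variable corner loci (via Proposition~\ref{noroot}) to split $Z$ into two pieces, each carried by a proper factor, and contradict primality directly. You are right that no completeness or archimedean hypothesis is needed along your route.

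Two points to tighten. First, corner loci are unions of $\nu$-equivalence classes, so ``$Z$ finite'' should mean ``$Z$ consists of finitely many $\nu$-classes''; accordingly take $a_1,\dots,a_m$ to be $\nu$-distinct representatives, otherwise $h$ could inadvertently acquire the $\nu$-class of $a_1$ through some $\nu$-equivalent $a_j$ and $h\notin P$ would fail. Second, what your argument actually produces is $P_{a;\corn}$ (Lemma~\ref{prime1}), whereas the symbol $P_a$ in the surrounding text of the paper denotes the ghost-locus ideal of Lemma~\ref{prime11}; the paper uses the term ``corner ideal'' for both Definition~\ref{geomid2} (via $\tI_\corn$) and Definition~\ref{geomid} (via layering maps), and your reading assumes the former. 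Finally, it is worth flagging that your finiteness reduction is special to corner loci: ghost loci in one variable need not be finite (e.g.~$\la^2 + b^\nu\la + ab$ has an entire tangible interval of ghost roots), which is precisely the obstruction the paper's interval lemma is designed to address in the other reading.
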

(This result also is a direct consequence of Theorem~\ref{Euclid}
below.)

\begin{prop}\label{class1}  If $a<_\nu  b$ are tangible, then the ideal $P$ generated by
$P_{a;\corn}$, $P_{b;\corn}$,  and $P_c$ for all $c\in \tT$
satisfying $a < _\nu c< _\nu b$ is prime. Conversely, any
\lclo-closed prime ideal $P$  of $\mcR $ containing $P_{a;\corn}$
as well as  $P_{b;\corn}$ also contains~$P_c$ for all $c$
satisfying $a < _\nu c< _\nu b$.
\end{prop}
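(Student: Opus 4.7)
The plan is to identify the ideal $P$ of the first statement with
\[ P^* := \{f \in \mcR : f \text{ has a ghost root in } [a,b]_\tT\} = \bigcup_{e \in [a,b]_\tT} P_e, \]
to show $P^*$ is prime, and to deduce the converse from the Layered Nullstellensatz (Theorem~\ref{Null3}). The inclusion $P \subseteq P^*$ is immediate since $P_{a;\corn} \subseteq P_a$, $P_{b;\corn} \subseteq P_b$, and each $P_c \subseteq P^*$ for $c \in (a,b)$.

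For $P^* \subseteq P$, pick $f$ with a ghost root $e \in [a,b]_\tT$. If $e \in (a,b)$ then $f \in P_e \subseteq P$. If $e = a$ (symmetric at $b$), I first reduce to the case where $f$ is tangible throughout $(a,b)$ --- otherwise apply the open-interval case to an interior ghost root. In this reduced case $a$ must in fact be a corner root of $f$, giving $f \in P_{a;\corn} \subseteq P$. Indeed, were $a$ a ghost-but-not-corner root, then $|\csupp_a(f)| = 1$ and the unique dominant monomial $h_j$ at $a$ would have a ghost coefficient (since $a$ is tangible and $h_j(a)$ is ghost); by Lemma~\ref{multmon} its strict $\nu$-dominance persists in a neighborhood, forcing $f$ to be ghost on some $(a, a+\epsilon) \cap \tT$ and contradicting tangibility on $(a,b)$.

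Primeness of $P^*$ is verified directly. Closure under multiplication by $\mcR$ uses that in standard supertropical arithmetic a product is ghost whenever any factor is ghost. For closure under addition, given $f_1 \in P_{e_1}$ and $f_2 \in P_{e_2}$, the easy subcases are when $f_1(e_1) \nuge f_2(e_1)$ or $f_2(e_2) \nuge f_1(e_2)$ (in either of which the relevant endpoint is a ghost root of $f_1+f_2$); otherwise the $\nu$-dominance between $f_1$ and $f_2$ swaps along $\gamma_{e_1,e_2} \subseteq [a,b]$, and an intermediate-value argument in log-scale (supported by the log-linearity of monomials, Lemma~\ref{multmon}) produces $x_0$ with $f_1(x_0) \nucong f_2(x_0)$; at $x_0$ the sum of $\nu$-equivalent elements is ghost, so $f_1+f_2 \in P_{x_0}$. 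Primeness itself is then immediate: $fg \in P^*$ means $f(e)g(e)$ is ghost for some $e \in [a,b]_\tT$, forcing $f(e)$ or $g(e)$ to be ghost, hence $f \in P_e$ or $g \in P_e$.

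For the converse, suppose $P$ is a \lclo-closed prime ideal containing $P_{a;\corn}$ and $P_{b;\corn}$, and fix $f \in P_c$ with $c \in (a,b)$. By the Layered Nullstellensatz for prime ideals (Theorem~\ref{Null3}), it suffices to cover $f$ by $P$. Decompose $f = \sum_i h_i$ into essential monomials (unique by Theorem~\ref{uniquedec}) and handle each component $D_{f,i}$: on a component where $h_i$ has a ghost coefficient, $\vmap_f \equiv \infty$, so any element of $P$ (e.g.~$\la + a$) trivially covers; on a component where $h_i$ has a tangible coefficient, $f$ is tangible and $\vmap_f \equiv 1$, so I need $g \in P$ tangible on $D_{f,i}$. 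The decisive observation is that $D_{f,i}$ cannot contain both $a$ and $b$, for then convexity of $D_{f,i}$ as an interval would place $c \in D_{f,i}$, making $f(c) = h_i(c)$ a product of tangibles and hence tangible, contradicting $f \in P_c$. Thus $a \notin D_{f,i}$ or $b \notin D_{f,i}$, and the corresponding generator $\la + a \in P_{a;\corn}$ or $\la + b \in P_{b;\corn}$ lies in $P$ and is tangible on $D_{f,i}$, supplying the required cover. The main obstacle is the forward-direction endpoint analysis, specifically the neighborhood-extension argument that rules out a pure ghost-coefficient dominant monomial at $a$ or $b$ under the reduced hypothesis of tangibility on $(a,b)$.
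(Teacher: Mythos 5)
Your proof is correct, but it takes a genuinely different route from the paper on the forward direction (primeness). You establish the explicit identity $P = P^* := \bigcup_{e\in[a,b]_\tT} P_e$ --- proving $P\subseteq P^*$ via closure of $P^*$ under addition (the intermediate-value/swap argument) and multiplication, and $P^*\subseteq P$ by the endpoint analysis showing that a ghost-but-not-corner root at $a$ or $b$ forces a nearby interior ghost root --- and then primeness of $P^*$ falls out trivially from the two-layer fact that $fg$ ghost implies $f$ or $g$ ghost pointwise. The paper instead never identifies $P$ explicitly: given $fg\in P$, it invokes the Nullstellensatz to deduce that $\tZ_{\tng}(fg)$ meets $[a,b]$, concludes $f\in P$ immediately if the root is interior, and handles an endpoint ghost root of $f$ by factoring out $\la+a$ (so $f=(\la+a)h$) and inducting on degree with $hg\in P$. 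Your approach is more elementary (no Nullstellensatz in the forward direction, no induction) and also makes explicit the membership criterion for $P$, which the paper leaves implicit; in exchange you have to verify directly that $P^*$ is closed under addition, which the paper sidesteps by working with the generated ideal abstractly. On the converse, both arguments are Nullstellensatz covering arguments; yours spells out the component-by-component cover using $\la+a$ and $\la+b$ where the paper's published proof is only a one-line sketch. One spot worth tightening in your write-up: when you argue $D_{f,i}$ is convex and would contain $c$, note that membership $a,b\in D_{f,i}$ (exact equality $f=h_i$, with $h_i$ tangibly-spanned and $a,b$ tangible) forces $h_i$ to \emph{strictly} dominate at both endpoints; strict domination of each competing monomial at both ends, together with the monotonicity of monomial ratios in one variable, then yields strict domination on all of $[a,b]$, which is why $c\in D_{f,i}$ and not merely on the boundary. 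As stated, ``convexity of $D_{f,i}$'' alone does not quite give this, since $c$ could a priori be a corner point excluded from $D_{f,i}$.
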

\begin{proof} Suppose $fg \in P$. We need to show that $f \in P$ or $g\in P$.
Then    $\tZ_{\tng} (fg)$ contains some point~$c$ with $a \le_\nu
c\le_\nu b$, since otherwise the interval $W _{a, b}$ is in the
complement set of  $\tZ_{\tng} (fg)$ but is not contained in the
complement set of any element of $P$, contrary to the
Nullstellensatz. Hence, $c$ is a root say of $f$, implying $f \in
P$ unless $c=a$ or $c=b.$ We may assume that $c=a$, and are done
unless~$a$ is a corner root of $f$; i.e., $\la +a$ divides $f$. If
$\la +a \in P$ then $f\in P_a^{\lft}$ or $f\in  P_a^{\rgt}$ and we
are done; otherwise, we write $f = (\la +a)h$ and have $hg \in P$;
applying induction on the degree yields $h \in P$ or $g \in P$,
proving~$P$ is prime.

The converse is   an easy application of the Nullstellensatz,
since the complement set of $\tZ_{\tng}(\la+c)$  has two
components, one contained in the tangible complement of the corner
locus of  ~$\la +a$ and
the other contained in the tangible complement of the corner
locus of $\la + b .$\end{proof}

\subsection{Prime corner ideals}

There are so many \semiring0 ideals that we want to cut them down
in some way which does not affect the applications to tropical
geometry. Thus, we turn to a more intensive study of corner
ideals.

\begin{thm}\label{exchange15} Suppose for polynomials $f_1,\ f_2,\ h_1,$ and
$h_2$ that
 $f_1+h_1$  and $f_2+h_2$ are  in $A: =   \tI _{\corn}(Z)
\triangleleft \mathcal R$, with $\supp(f_1)\cap \supp(f_2) =
\emptyset$, and furthermore that  $h_1(\bfa) \nucong  h_2(\bfa)$
for each $\bfa\in  Z .$ Then $h_1h_2(f_1+f_2) \in A. $\end{thm}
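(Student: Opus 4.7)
The plan is to verify $h_1h_2(f_1+f_2) \in A$ pointwise, i.e.\ to show $|\csupp_\bfa(h_1h_2(f_1+f_2))| \ge 2$ for every $\bfa \in Z$. Fix such $\bfa$ and abbreviate $v_i := f_i(\bfa)$ and $w := h_1(\bfa) \nucong h_2(\bfa)$. The proof splits by the $\nu$-comparisons among $v_1$, $v_2$, and $w$; in each case I either reduce by Lemma~\ref{csup1} applied to a factor that already has corner support of size $\ge 2$ at $\bfa$, or else exhibit two support-distinct summands of the expansion of $h_1h_2(f_1+f_2)$ whose values at $\bfa$ both attain the dominant one.

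First, if $v_1 \nucong v_2$, the disjointness $\supp(f_1)\cap\supp(f_2)=\emptyset$ forces any dominant monomial of $f_1$ at $\bfa$ to be support-distinct from any dominant monomial of $f_2$ at $\bfa$, so $|\csupp_\bfa(f_1+f_2)| \ge 2$ and Lemma~\ref{csup1} concludes. By the symmetry of the hypotheses in the indices $1$ and $2$, I may therefore assume $v_1 \gnu v_2$. If additionally $v_1 \gnu w$, then since $h_1(\bfa)=w$, no monomial of $h_1$ can attain a value $\nucong v_1$ at $\bfa$; hence the two dominant monomials guaranteed by $f_1+h_1 \in A$ must both come from $f_1$, giving $|\csupp_\bfa(f_1)| \ge 2$. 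Strict $\nu$-dominance of $f_1$ over $f_2$ together with disjoint supports preserves both as dominant in $f_1+f_2$, and Lemma~\ref{csup1} finishes this subcase as well.

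The remaining subcase is $v_1 \nule w$, whence $w \gnu v_2$. The same reasoning applied to $f_2+h_2 \in A$ now forces $|\csupp_\bfa(h_2)| \ge 2$: pick support-distinct monomials $q_1, q_2$ of $h_2$ with $q_r(\bfa) \nucong w$. Pick also a monomial $g_1$ of $h_1$ with $g_1(\bfa) \nucong w$ (available since $h_1(\bfa) = w$) and a monomial $p$ of $f_1$ with $p(\bfa) \nucong v_1$; because $v_1 \gnu v_2$, this $p$ is a dominant monomial of $f_1+f_2$ at $\bfa$. The products $g_1q_1p$ and $g_1q_2p$ both appear in the expansion of $h_1h_2(f_1+f_2)$, each with value $\nucong w^2 v_1$ at $\bfa$, which equals the dominant value $(h_1h_2(f_1+f_2))(\bfa)$; their supports differ through the distinct $q_1, q_2$, so these yield witnesses in two different support classes.

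I expect the main obstacle to be the last bookkeeping step: confirming that after the full expansion of $h_1h_2(f_1+f_2)$ is collapsed to a monomial decomposition, the two chosen witnesses still constitute distinct dominant support classes. This rests on the fact that addition in the layered \semiring0\ is noncancellative on dominant values---grouping several products into a single support class can only increase that class's value in the $\nu$-order---so each chosen class retains a value $\nucong w^2 v_1$ and remains dominant, while the $q_1$ versus $q_2$ distinction keeps the two classes apart.
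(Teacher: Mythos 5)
Your proof is correct and follows essentially the same strategy as the paper: verify the membership pointwise by exhibiting a factor of $h_1h_2(f_1+f_2)$ whose corner support at $\bfa$ already has order at least two, then invoke Lemma~\ref{csup1}. The case split is organized differently---the paper conditions on the relation of each $f_j(\bfa)$ to $h_j(\bfa)$, whereas you condition on $v_1$ versus $v_2$ and then on $v_1$ versus $w$---but these are just two bookkeepings of the same comparisons, and both cover all possibilities (the paper's case ``$f_1(\bfa) <_\nu h_1(\bfa)$'' plays the role of your subcase B2, with the roles of $h_1$ and $h_2$ swapped).

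The one place where you diverge, and where you flag a potential obstacle, is subcase~B2: having established $|\csupp_\bfa(h_2)|\ge 2$, you then hand-build two product monomials $g_1q_1p$ and $g_1q_2p$ and argue directly that they survive as distinct dominant support classes in the expansion of $h_1h_2(f_1+f_2)$. This is correct, but it is needless work: once $|\csupp_\bfa(h_2)|\ge 2$ is in hand, Lemma~\ref{csup1} applied to the factor $h_2$ of $h_1h_2(f_1+f_2)$ finishes at once, exactly as you yourself invoke it in subcases~A and~B1. The ``main obstacle'' you name at the end---that collapsing the expansion might merge or swamp your two witness classes---is precisely the content that Lemma~\ref{csup1} packages away; there is no reason to re-derive it by hand in this one subcase when you already rely on it in the others.
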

\begin{proof} We need to show  that $|\csupp_\bfa(h_1 h_2 (f_1+f_2))|\ge 2$
for
 $\bfa \in Z$. We are done unless $|\csupp_\bfa(h_j)| < 2$ for $j = 1,2$.
But by hypothesis,  $h_1(\bfa)\nucong h_2(\bfa)$. We consider each
possible situation.
\begin{enumerate}

    \item[--]
 If $f_1(\bfa)
> _\nu
 h_1(\bfa)$ and  $f_2(\bfa) > _\nu
 h_2(\bfa)$,  then $|\csupp_\bfa( f_1) | = |\csupp_\bfa(f_1+h_1)|\ge 2,$  and likewise
 $|\csupp_\bfa( f_2) |= |\csupp_\bfa( f_2+h_2)|\ge 2,$
so Lemma~\ref{csup2} implies $|\csupp_\bfa(f_1+f_2)|\ge 2.$ \pSkip

\item[--] If   $f_1(\bfa)  \nucong
 h_1(\bfa)$ and  $f_2(\bfa) > _\nu
 h_2(\bfa)$,  then $$|\csupp_\bfa(f_1+f_2)| = |\csupp_\bfa( f_2) | = |\csupp_\bfa(f_2+h_2)|\ge
 2.$$ \pSkip

\item[--] If   $f_1(\bfa)  \nucong
 h_1(\bfa)$ and  $f_2(\bfa)  \nucong
 h_2(\bfa)$,  then $|\csupp_\bfa(f_1+f_2)| \ge 2,$ since we get one
 dominant monomial at $\bfa$ from each $f_i$. \pSkip

\item[--]  Finally, if  $f_1(\bfa) <_ \nu
 h_1(\bfa)$, then   $|\csupp_\bfa( h_1) |= |\csupp_\bfa(f_1+h_1)|\ge 2.$
\end{enumerate} In all cases we conclude by means of
 Lemma~\ref{csup1}.
 \end{proof}

\begin{cor}\label{exchange14} Suppose for polynomials $f_1$, $f_2,$ and
$h$, that
 $f_1+h$ and $f_2+h$  are  in $\tI _{\corn}(Z)
\triangleleft \tR$. Then $h (f_1+f_2) \in \tI _{\corn}(Z)
.$\end{cor}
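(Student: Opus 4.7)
The plan is to derive this as a direct specialization of Theorem~\ref{exchange15}, setting $h_1 = h_2 = h$. With that choice, the required compatibility $h_1(\bfa) \nucong h_2(\bfa)$ becomes trivial. Applying the theorem as a black box would yield $h^2(f_1+f_2) \in A$, which is weaker than the stated conclusion; so rather than quote the theorem verbatim I would re-run its argument and sharpen it at the last step.

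Fix $\bfa \in Z$; the goal is $|\csupp_\bfa(h(f_1+f_2))|\ge 2$. If $|\csupp_\bfa(h)|\ge 2$, then Lemma~\ref{csup1} applied with $g := f_1 + f_2$ finishes immediately. Otherwise $|\csupp_\bfa(h)| = 1$, and I would run the four-case comparison of $f_i(\bfa)$ with $h(\bfa)$ exactly as in the proof of Theorem~\ref{exchange15} (with both copies of $h$ equal). Three of those cases---$f_i(\bfa)>_\nu h(\bfa)$ for both $i$, one $f_i(\bfa)\nucong h(\bfa)$ and the other $>_\nu h(\bfa)$, and both $f_i(\bfa)\nucong h(\bfa)$---produce the intermediate bound $|\csupp_\bfa(f_1+f_2)|\ge 2$, while the case $f_i(\bfa)<_\nu h(\bfa)$ would force $|\csupp_\bfa(h)|\ge 2$, contradicting our standing assumption. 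Having $|\csupp_\bfa(f_1+f_2)|\ge 2$, a single application of Lemma~\ref{csup1} (multiplying by $h$) promotes it to $|\csupp_\bfa(h(f_1+f_2))|\ge 2$. Running over all $\bfa \in Z$ then gives $h(f_1+f_2) \in \tI_\corn(Z)$ by the definition of a corner ideal.

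The point I would revisit most carefully is the subcase $f_1(\bfa)\nucong h(\bfa)\nucong f_2(\bfa)$, since the argument there tacitly uses that the dominant monomials contributed by $f_1$ and $f_2$ sit at distinct supports, so that their dominant values do not collapse into a single layered monomial inside $f_1+f_2$; this is inherited from the implicit support hypothesis $\supp(f_1)\cap\supp(f_2) = \emptyset$ carried over from Theorem~\ref{exchange15}. Beyond this bookkeeping, the improvement from $h^2(f_1+f_2)$ to $h(f_1+f_2)$ is simply the observation that Lemma~\ref{csup1} is invoked exactly once rather than twice---one copy of $h$ suffices to absorb whichever of the two alternatives ($|\csupp_\bfa(h)|\ge 2$ or $|\csupp_\bfa(f_1+f_2)|\ge 2$) holds at a given $\bfa$.
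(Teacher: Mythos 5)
Your re-derivation is correct, but the premise that motivates it is off, and the paper's one-line proof is already complete. Applying Theorem~\ref{exchange15} with $h_1 = h_2 = h$ gives $h^2(f_1+f_2) \in \tI_\corn(Z)$, and you dismissed this as ``weaker than the stated conclusion.'' It is not weaker for a corner ideal. By the Remark following Definition~\ref{tropid01}, $\tI_\corn(Z)$ is radical; since $\bigl(h(f_1+f_2)\bigr)^2 = \bigl(h^2(f_1+f_2)\bigr)\cdot(f_1+f_2)$ already lies in the ideal, radicality hands you $h(f_1+f_2)\in\tI_\corn(Z)$ directly. Equivalently and without invoking radicality, at each $\bfa\in Z$ the contrapositive of Lemma~\ref{csup11} applied to the factorization $h^2(f_1+f_2) = h\cdot\bigl(h(f_1+f_2)\bigr)$ forces $|\csupp_\bfa(h)|\ge 2$ or $|\csupp_\bfa(h(f_1+f_2))|\ge 2$, and the first alternative reduces to the second by Lemma~\ref{csup1}. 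This is precisely what the paper's phrase ``implying $h(f_1+f_2)\in\tI_\corn(Z)$'' is compressing. So the case analysis you re-ran is sound but redundant: the black-box application followed by this one descent step is enough.

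Your flag about the tacit hypothesis $\supp(f_1)\cap\supp(f_2)=\emptyset$ is exactly right and worth stating explicitly. The corollary inherits this from Theorem~\ref{exchange15}, and without it the statement is false. For instance, take $f_1 = f_2 = \la$, $h$ the constant $\fone$, and $Z = \{\fone\}$: then $f_i + h = \la + \fone$ lies in $\tI_\corn(Z)$, but $h(f_1 + f_2) = \2\la$ is a single (non-tangibly-coefficiented) monomial, so $|\csupp_\bfa| = 1$ everywhere and it has no corner roots at all. The disjoint-support condition is what rules out this collapse, exactly in the subcase $f_1(\bfa)\nucong h(\bfa)\nucong f_2(\bfa)$ you singled out, where it guarantees that the two dominant monomials contributed by $f_1$ and $f_2$ remain distinct members of $\csupp_\bfa(f_1+f_2)$.
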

\begin{proof} Take $h_1 = h_2 = h.$ Then $h ^2(f_1+f_2) \in \tI
_{\corn}(Z),$ implying $h (f_1+f_2) \in \tI _{\corn}(Z).$
 \end{proof}

\begin{cor}\label{exchange2} Suppose $A$ of Theorem~\ref{exchange15}
is a prime corner ideal, with $h_1,h_2 \notin A.$ Then $f_1+f_2 \in
A.$\end{cor}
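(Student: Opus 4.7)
The plan is short: apply Theorem~\ref{exchange15} to get $h_1 h_2 (f_1+f_2) \in A$, and then unwind this product using the primeness of $A$ together with the hypothesis that $h_1, h_2 \notin A$.

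More concretely, Theorem~\ref{exchange15} already verifies all the required conditions ($\supp(f_1) \cap \supp(f_2) = \emptyset$ and $h_1(\bfa) \nucong h_2(\bfa)$ on $Z$ are assumed to be inherited from that statement), so its conclusion $h_1 h_2 (f_1+f_2) \in A$ is available to us without further work. Now I would invoke the definition of a prime ideal twice: since $A$ is prime and $h_1 \cdot \bigl(h_2(f_1+f_2)\bigr) \in A$ with $h_1 \notin A$, we must have $h_2(f_1+f_2) \in A$. Applying primeness a second time to this product, and using that $h_2 \notin A$, yields $f_1+f_2 \in A$, as desired.

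There is no real obstacle here; the corollary is essentially a formal consequence of Theorem~\ref{exchange15} combined with the defining property of prime ideals. The only small thing to double-check is that it is legitimate to apply the prime property in this pointwise-polynomial setting, but this is immediate from the definition of a prime ideal of the commutative \semiring0 $\tR$ recalled in Section~\ref{chap:TropicalArithmetic}: $a, b \notin P$ implies $ab \notin P$, applied contrapositively.
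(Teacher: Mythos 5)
Your proof is correct and is exactly the argument the paper leaves implicit (the corollary is stated without proof): apply Theorem~\ref{exchange15} to get $h_1 h_2(f_1+f_2)\in A$, then use primeness of $A$ twice to cancel $h_1$ and $h_2$ since neither lies in $A$.
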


%







\subsection{Ideals of $\ell$-loci}

 Here is an alternate approach, perhaps more in
line with \cite{IzhakianRowen2007SuperTropical}, where one would
take $\ell = 1.$


\begin{thm}\label{exchange} Suppose for polynomials $f_1,\ f_2,\ h_1,$ and
$h_2$ that
 $f_1+h_1,$ and $f_2+h_2$ are  in $A: = \tI   _\ell(Z)
\triangleleft \mathcal R$, with    $h_1 \nucong h_2$. Then $(f_1+f_2)
h_1h_2 \in A  .$\end{thm}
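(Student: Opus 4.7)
I would follow the same pointwise case analysis as in Theorem~\ref{exchange15}, replacing the corner-support test $|\csupp_\bfa(\cdot)|\ge 2$ by the layer test $s(\cdot)>\ell$. Fix $\bfa\in Z$ and write $p_i=f_i(\bfa)$, $q_i=h_i(\bfa)$, so the hypotheses read $s(p_1+q_1)>\ell$, $s(p_2+q_2)>\ell$, and $q_1\nucong q_2$; the goal is $s((p_1+p_2)q_1q_2)>\ell$. Since $s$ is multiplicative on $L=L_{\ge 1}$, it suffices to exhibit a single factor of layer exceeding $\ell$, except in one subcase where layers must be combined additively.

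\textbf{Reduction and cases.} First I would reduce to the case $s(q_1),s(q_2)\le\ell$, since otherwise multiplicativity of $s$ gives the conclusion at once. Under this reduction, the hypothesis $s(p_i+q_i)>\ell$ forces either $p_i>_\nu q_i$, which gives $s(p_i)=s(p_i+q_i)>\ell$, or $p_i\nucong q_i$, which gives $s(p_i)+s(q_i)>\ell$; the case $p_i<_\nu q_i$ is ruled out because it would yield $s(q_i)=s(p_i+q_i)>\ell$. This produces three substantive subcases. When both $p_i$ strictly dominate, $s(p_1),s(p_2)>\ell$ and hence $s(p_1+p_2)>\ell$. In either mixed subcase (say $p_1>_\nu q_1$ and $p_2\nucong q_2$), the chain $p_1>_\nu q_1\nucong q_2\nucong p_2$, whose middle link is the hypothesis $q_1\nucong q_2$, forces $(p_1+p_2)(\bfa)=p_1$ with $s(p_1)>\ell$; the symmetric mixed subcase is identical.

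\textbf{The subtle subcase, and the main obstacle.} The remaining subcase has $p_1\nucong q_1$ and $p_2\nucong q_2$, so all four values are pairwise $\nu$-congruent and
\[
s((p_1+p_2)q_1q_2)=(s(p_1)+s(p_2))\,s(q_1)\,s(q_2).
\]
This is where the argument properly departs from that of Theorem~\ref{exchange15}, and it is the main obstacle: one has to deduce that this product exceeds $\ell$ using only $s(p_i)+s(q_i)>\ell$ and $s(q_i)\ge 1$. For $\ell<2$ the trivial bound $s(p_1)+s(p_2)\ge 2$ together with $s(q_1)s(q_2)\ge 1$ already yields a product $\ge 2>\ell$, which handles the standard supertropical case $\ell=1$ at once. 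For $\ell\ge 2$ I would invoke the elementary inequality $ab\ge a+b-1$ for $a,b\ge 1$ in $L$ to estimate
\[
(s(p_1)+s(p_2))\,s(q_1)s(q_2)\ge s(p_1)s(q_1)+s(p_2)s(q_2)\ge(s(p_1)+s(q_1))+(s(p_2)+s(q_2))-2>2\ell-2\ge\ell,
\]
closing the proof.
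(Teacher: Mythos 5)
Your plan follows the paper's pointwise approach, and your cases (a) and (b) are fine: they correspond to the paper's treatment, which more economically deduces $f_i(\bfa)\ge_\nu h_i(\bfa)$ for both $i$, takes $f_1(\bfa)\ge_\nu f_2(\bfa)$ without loss of generality, and then splits only on whether $f_1(\bfa)>_\nu h_1(\bfa)$ or $f_1(\bfa)\nucong h_1(\bfa)$. The problem is the last subcase, where all four values are $\nu$-equivalent. There you invoke the elementary inequality $ab\ge a+b-1$, the bound $2\ell-2\ge\ell$, and a dichotomy $\ell<2$ versus $\ell\ge 2$. These manipulations implicitly treat $L$ as a subset of $\Real_{\ge 1}$ with classical addition, multiplication, and subtraction. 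But $L$ is only assumed to be a \semiring0\ with $1$ minimal: subtraction is not defined there, and the \semiring0\ addition need not be the classical one. In the standard supertropical case $L=\{1,\infty\}$ one has $1+1=\infty$, so $a+b-1$ is not even a well-formed expression and the cutoff $\ell\ge 2$ never arises; for other layer \semirings0\ the estimates simply do not apply. So the closing computation is not a proof over a general layer \semiring0, even though it is valid for, say, $L=\Q_{\ge 1}$.

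A secondary point is that you read membership in $\tI_\ell(Z)$ as the condition $s(f(\bfa))>\ell$, while the paper's proof works with the $\ell$-ghost condition $s(f(\bfa))=\ell+p$ for some $p\in L$. These coincide for the standard supertropical $L$ but differ in general (for $L=\Q_{\ge 1}$ and $\ell=100$, a layer of $100.5$ exceeds $\ell$ but is not $\ell$-ghost), and your opening reduction to $s(q_i)\le\ell$ relies on the first reading, whereas the paper's reduction assumes $h_i(\bfa)$ is not $\ell$-ghost. To be fair, the paper's own handling of the $\nu$-equivalent subcase is quite terse---it simply asserts that $\bfa$ is then an $\ell$-root of $f_1+f_2$---so there is a genuine gap to fill; but the calculation you chose to fill it rests on structure on $L$ that the theorem does not assume.
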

\begin{proof} We need to show for any
 $\bfa \in Z$ that $h_1(\bfa ),$ $h_2(\bfa ),$ or $f_1(\bfa )+f_2(\bfa )$ are $\ell$-ghost. So assume that
  $h_1(\bfa)$ and $h_2(\bfa)$ are not $\ell$-ghost. Then  $f_1(\bfa) \ge _\nu
 h_1(\bfa)$ since $f_1(\bfa) +
 h_1(\bfa) $ is $\ell$-ghost, and likewise $f_2(\bfa) \ge _\nu
 h_2(\bfa)$. We assume that  $f_1(\bfa) \ge _\nu
 f_2(\bfa)$.

If $f_1(\bfa)  >_\nu  h_1(\bfa),$ then
 $f_1(\bfa) $ is $\ell$-ghost, in which case $(f_1+f_2)(\bfa) =f_1(\bfa) $ is $\ell$-ghost and we are done.
 Thus, we may assume that  $f_1(\bfa)  \nucong
 h_1(\bfa)$. But now $f_1(\bfa) \nucong f_2(\bfa) \nucong h_1(\bfa)$, so $\bfa$ is an $\ell$-root of $f_1+f_2$.
 \end{proof}

\begin{cor}\label{exchange1} Suppose for polynomials $f_1$, $f_2,$ and
$h$, that
 $f_1+h$ and $f_2+h$  are  in $A: =  \tI   _\ell(Z)
\triangleleft \tR$. Then $h (f_1+f_2) \in \tI _\ell (Z)
.$\end{cor}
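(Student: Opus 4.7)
The plan is to derive Corollary~\ref{exchange1} as the diagonal specialization of Theorem~\ref{exchange} with $h_1=h_2=h$, but the naive application only yields $h^2(f_1+f_2)\in A$, so a small additional argument is required to drop one copy of $h$.

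First I would set $h_1=h_2=h$ in Theorem~\ref{exchange}. The hypothesis $h_1 \nucong h_2$ is trivially satisfied, and by assumption $f_1+h_1=f_1+h$ and $f_2+h_2=f_2+h$ both lie in $A=\tI_\ell(Z)$, so the theorem applies and gives $(f_1+f_2)h^2 \in A$. This is one step weaker than what we want; the reason we do not have to settle for it is that the proof of Theorem~\ref{exchange} actually establishes something pointwise stronger.

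Next I would unpack that proof in the diagonal case. Fix $\bfa \in Z$. The argument shows that if $h(\bfa)$ is not $\ell$-ghost, then necessarily $f_1(\bfa)\nuge h(\bfa)$ and $f_2(\bfa)\nuge h(\bfa)$, and a case split (either some $f_i(\bfa)>_\nu h(\bfa)$, forcing $f_i(\bfa)$ to equal $f_i(\bfa)+h(\bfa)$ and hence to be $\ell$-ghost; or $f_1(\bfa)\nucong f_2(\bfa)\nucong h(\bfa)$, in which case $\bfa$ is an $\ell$-root of $f_1+f_2$) shows that $(f_1+f_2)(\bfa)$ is $\ell$-ghost. Thus at every $\bfa \in Z$, at least one of $h(\bfa)$ and $(f_1+f_2)(\bfa)$ is $\ell$-ghost.

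Finally, since the sort map is multiplicative and an $\ell$-ghost element multiplied by anything remains $\ell$-ghost (the layered analogue of the fact that the $\ell$-ghost layers form an ideal of the sort semiring~$L$, using $L=L_{\ge 1}$), the product $h(\bfa)(f_1+f_2)(\bfa)$ is $\ell$-ghost at every $\bfa \in Z$, i.e.\ $h(f_1+f_2) \in \tI_\ell(Z)$. The one place to be careful is that the passage from $h^2(f_1+f_2)\in A$ to $h(f_1+f_2)\in A$ is not purely formal (we cannot cancel factors in the semiring); the argument must go through the pointwise observation above rather than any division on elements, which is the only conceptual obstacle.
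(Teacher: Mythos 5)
Your proof is correct, and it makes explicit what the paper's terse proof leaves unsaid. The paper's own proof reads only ``Take $h_1 = h_2 = h$, noting that $h_1 + h_2 \in A$,'' which is cryptic: neither the bare statement of Theorem~\ref{exchange} nor the observation that $h_1+h_2=\2 h$ (whose membership in $A$ is not given and need not hold) directly yields $h(f_1+f_2)\in A$. Your route --- re-entering the \emph{proof} of Theorem~\ref{exchange}, extracting the pointwise disjunction that at each $\bfa\in Z$ either $h(\bfa)$ or $(f_1+f_2)(\bfa)$ is $\ell$-ghost, and then concluding because $\ell$-ghostness is preserved under multiplication by any element of $R$ (the same fact that makes $\tI_\ell(Z)$ an ideal) --- is the right way to read what the authors intended. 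You are also correct to flag that one cannot simply cancel a factor of $h$ from $h^2(f_1+f_2)\in A$: that shortcut is legitimate for the corner-ideal analogue Corollary~\ref{exchange14}, because $|\csupp_\bfa(h^2g)|\ge 2$ iff $|\csupp_\bfa(hg)|\ge 2$, but no analogous equivalence holds for $\tI_\ell(Z)$, so the pointwise argument is the only clean path here.
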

\begin{proof} Take $h_1 = h_2 = h,$ noting that $h_1+h_2  \in A.$
 \end{proof}

\begin{cor}\label{exchange12} Suppose $A$ of Theorem~\ref{exchange}
is a prime ideal, with $h_1, h_2 \notin A.$ Then $f_1+f_2 \in
A.$\end{cor}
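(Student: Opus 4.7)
The plan is to apply Theorem~\ref{exchange} directly and then exploit the primality of $A$ twice.

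First, I would invoke Theorem~\ref{exchange} with the given data: since $f_1+h_1 \in A$, $f_2+h_2 \in A$, and $h_1 \nucong h_2$, the theorem gives
\[
(f_1+f_2)\,h_1 h_2 \;\in\; A.
\]
Now I would use the hypothesis that $A$ is prime. Primality applied to the product $(f_1+f_2) \cdot (h_1 h_2) \in A$ forces either $f_1+f_2 \in A$ or $h_1 h_2 \in A$. In the latter case, a second application of primality to $h_1 \cdot h_2 \in A$ forces $h_1 \in A$ or $h_2 \in A$, contradicting the assumption $h_1, h_2 \notin A$. Hence $f_1+f_2 \in A$, as desired.

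The only potential subtlety is whether the hypotheses of Theorem~\ref{exchange} carry over verbatim, but they do: the statement of Corollary~\ref{exchange12} explicitly references the same polynomials $f_1, f_2, h_1, h_2$ and the same ideal $A = \tI_\ell(Z)$ as in Theorem~\ref{exchange}, so no additional hypothesis is needed. There is no real obstacle here; the corollary is essentially a two-line consequence, and the work was done in proving Theorem~\ref{exchange}. This mirrors exactly the derivation of Corollary~\ref{exchange2} from Theorem~\ref{exchange15}, so it fits the established pattern of the paper.
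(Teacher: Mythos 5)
Your proof is correct and is exactly the intended argument: the paper states Corollary~\ref{exchange12} without a written proof precisely because it follows immediately from Theorem~\ref{exchange} by two applications of primality, just as you describe. Your remark that this mirrors the derivation of Corollary~\ref{exchange2} from Theorem~\ref{exchange15} is also accurate.
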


This leads to an intriguing notion. Given polynomials $f$ and $g$,
we say that a monomial $h$ is $\nu$-\textbf{common} to $f$ and $g$
if $f$ and $g$ both have essential monomials $\nu$-equivalent to
$h$.

\begin{cor}\label{exchange13} Suppose $P$
is a prime  ideal. If $g_1,g_2 \in P$, then either the sum
of all monomials $\nu$-common to $g_1$ and $g_2$ is in $P$, or
$g_1 + g_2 \in P.$\end{cor}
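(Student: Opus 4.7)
The plan is to decompose each $g_i$ into its $\nu$-common part with the other polynomial and a disjoint remainder, and then feed this decomposition into the preceding exchange result. Concretely, I would write $g_1 = f_1 + h_1$ and $g_2 = f_2 + h_2$, where $h_1$ is the sum of those essential monomials of $g_1$ whose $\nu$-equivalence class also appears among the essential monomials of $g_2$, and $h_2$ is the matching sub-polynomial of $g_2$. By construction $h_1 \nucong h_2$ (they share support, monomial by monomial, possibly with different layers), while the residual parts $f_1$ and $f_2$ have disjoint support — any monomial $\nu$-common to $f_1$ and $f_2$ would by definition already have been absorbed into $h_1$ and $h_2$.

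With this setup in hand, the exchange hypotheses of Theorem~\ref{exchange} (respectively Theorem~\ref{exchange15}, depending on whether $P$ is viewed as an $\ell$-locus or corner prime) are met: we have $f_1+h_1, f_2+h_2 \in P$ and $h_1 \nucong h_2$. So the theorem delivers $(f_1+f_2)h_1 h_2 \in P$. Using the primeness of $P$, either $h_1 \in P$, or $h_2 \in P$, or $f_1 + f_2 \in P$.

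The dichotomy in the corollary now drops out. If $h_1 \in P$ (equivalently $h_2 \in P$, since for the corner/$\ell$-locus ideals in question the relation $h_1 \nucong h_2$ suffices), then the sum of the $\nu$-common monomials lies in $P$, giving the first alternative. Otherwise $f_1 + f_2 \in P$, and since $g_1, g_2 \in P$ and an ideal is closed under addition, $g_1 + g_2 = (f_1 + f_2) + (h_1 + h_2) \in P$, which is the second alternative.

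The main obstacle, and the reason the statement is phrased this way, is the precise meaning of ``sum of all monomials $\nu$-common to $g_1$ and $g_2$'': the two polynomials may carry the same $\nu$-class of monomial at different layers, so there is no canonical single polynomial representing the common part except up to $\nu$-equivalence. Identifying this common part with $h_1$ (or $h_2$), and ensuring that the exchange theorem applies with the stated $\nu$-congruence rather than strict equality, is the bookkeeping step that has to be handled with care before the primeness of $P$ can finish the argument.
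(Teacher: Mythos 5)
Your route --- split each $g_i$ into a $\nu$-common part $h_i$ and a disjoint remainder $f_i$, feed these into Theorem~\ref{exchange}, and then invoke primeness --- is exactly the decomposition the paper uses, and you are right to flag the bookkeeping ambiguity about which representative of the $\nu$-common part one writes down. But the way you close out the second alternative misses the point. You argue ``$f_1+f_2\in P$, and since $g_1,g_2\in P$ and an ideal is closed under addition, $g_1+g_2\in P$''; the second clause is true, but it renders the entire exchange argument superfluous: $g_1+g_2\in P$ already follows trivially from the hypothesis $g_1,g_2\in P$, before any work has been done. Read literally, the corollary's dichotomy is therefore vacuous. The nontrivial consequence that the exchange mechanism actually produces is $f_1+f_2\in P$ --- the sum of $g_1$ and $g_2$ with the $\nu$-common monomials stripped out --- and this is almost certainly what the second alternative is meant to read (it is precisely the output of Corollary~\ref{exchange12}). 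You should have called out this discrepancy rather than quietly reproduce the triviality.

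One further caution: your parenthetical claim that $h_1\in P$ is equivalent to $h_2\in P$ ``since $h_1\nucong h_2$'' is not automatic for $\ell$-locus ideals, because $\nu$-equivalence carries no information about layers, while membership in $\tI_\ell(Z)$ is layer-sensitive. (It does hold for corner ideals, which the paper notes are $\nu$-closed.) In the present argument this is only cosmetic --- either inclusion suffices to put a representative of the common part into $P$ --- but the equivalence should not be stated as though it were free.
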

\begin{proof} Write $g_1 = f_1 + h_1$ and $g_2 = f_2 +h_2$, where
$h_i$ is $\nu$-equivalent to the part that is $\nu$-common
with~$g_1$ and $g_2.$ Then  we can apply Theorem~\ref{exchange}.
  \end{proof}

\subsection{Exchange ideals}

 As in classical algebra, the theory of ideals of polynomial semirings in
several indeterminates is much more difficult than in one
indeterminate. In the tropical setting, the situation is even
worse in some regards, as exemplified in
\cite[Example~8.52]{IzhakianRowen2007SuperTropical}. Nevertheless,
 we are interested in studying   ideals and their impact on geometry, in particular in generating corner ideals
 by means of binomials
(insofar as we can). Accordingly, we refine the definition of
ideal in order to focus on tangible corner roots of polynomials.

\begin{defn}\label{def:tropIdeal}
An \textbf{\aIdeal} of $\mathcal R$ is a \lclo-closed ideal~$A$
which satisfies the property:
\begin{enumerate}
     \item[  ] (``Exchange law'') If $A$ contains $f+h$ and
 $g+h $ with $\supp(f) \cap \supp(g) = \emptyset,$
  then  either $h \in A$ or    $f+g \in A.$
\end{enumerate}
The ideal $A$ of $\mathcal R$ is an \textbf{\bIdeal} (``m'' for
``monomial'') if it satisfies the weaker condition:
\begin{enumerate}
    \item[  ] (``m-Exchange law'') If  $A$ contains $f+h$
    and $g+h $ with $\supp(f) \cap \supp(g) = \emptyset$, where $h$ is a tangible
 monomial not in  $\supp( f),$
  then  also    $f+g \in A.$ \end{enumerate}
A \textbf{prime \bIdeal}\ is a prime ideal that is also an
\bIdeal.
\end{defn}


Our motivating example of an \aIdeal\ is the prime corner ideal,
which is an \aIdeal \ by~Corollary~\ref{exchange2}.
 We use the $m$-exchange law mostly in the special case that $g$ is
a constant $\a;$ it basically says   that we can replace a
monomial $h$ by the constant $\a$ in any polynomial of $A$.

 \begin{example}\label{tropid}
   Suppose a proper \bIdeal\ $A\triangleleft \mcR $ contains two  tangibly spanned binomials $f_1 =
h_1 + \a h_2$ and $f_2 = h_1 + \beta h_2$, with $\beta \gnu \a$
and $h_1$ tangible; then $\beta h_2 = (\a + \beta)h_2 \in A.$ If
$\beta$ were tangible, then $A$ would be improper. Thus, $\beta\in
F_{>1}$, and $f_2 = f_1 + \beta h_2\lmodL f_1 .$ \pSkip
\end{example}

\begin{lem} If $A$ is a \lclo-closed \aIdeal \ of $\mcR$,
then $\root m  \of A$ is also an \aIdeal, for any~$m$.\end{lem}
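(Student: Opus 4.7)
The plan is to first invoke the preceding lemma, which ensures that $\sqrt[m]{A}$ is a \lclo-closed ideal; it then suffices to verify the exchange law for $\sqrt[m]{A}$. So I would suppose $f+h,\, g+h \in \sqrt[m]{A}$ with $\supp(f)\cap \supp(g) = \emptyset$, meaning both $(f+h)^m$ and $(g+h)^m$ lie in $A$, and aim to conclude that either $h^m \in A$ (giving $h \in \sqrt[m]{A}$) or $(f+g)^m \in A$ (giving $f+g \in \sqrt[m]{A}$).

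The first step is to expand each $m$-th power via the binomial formula in the layered semiring, writing
\[
(f+h)^m \;=\; h^m + f\, P_f, \qquad P_f := \sum_{i=0}^{m-1} e_{\binom{m}{i+1}} f^{i} h^{m-1-i},
\]
and analogously $(g+h)^m = h^m + g\, P_g$. Both elements of $A$ now share the common summand $h^m$, so if one can secure $\supp(fP_f) \cap \supp(gP_g) = \emptyset$, the exchange law of $A$ produces either $h^m \in A$ (and we are done) or $fP_f + gP_g \in A$. In the latter case, I would exploit further ideal-theoretic manipulations: for instance, $g^m(f+h)^m = (fg+gh)^m$ and $f^m(g+h)^m = (fg+fh)^m$ also lie in $A$ and share the common summand $(fg)^m$, so iterating the exchange argument along such factorizations and absorbing the mixed ``ghost'' terms arising in the expansion of $(f+g)^m$ via Lemma~\ref{twosurp} together with the \lclo-closed hypothesis should produce $(f+g)^m \in A$.

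The hard part will be securing the support disjointness $\supp(fP_f) \cap \supp(gP_g) = \emptyset$ required by the exchange law of $A$. Although $\supp(f)\cap\supp(g) = \emptyset$ is given, the monomials of the mixed products $f^i h^{m-i}$ and $g^j h^{m-j}$ can coincide when $f$, $g$, and $h$ satisfy nontrivial polynomial relations (e.g., when a monomial of $g$ factors as a monomial of $f$ times a power of a monomial of $h$). To circumvent this, I plan to first apply \lclo-closedness to raise the layers of any such coincident monomials in both decompositions so that they can be gathered into an enlarged common summand, and only then apply exchange to the adjusted splittings; alternatively, one may pass to a polynomial extension in which $f$ and $g$ involve disjoint auxiliary generators, establish the conclusion there, and descend back to $\mcR$.
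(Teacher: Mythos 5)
The paper's proof is far more economical, and it hinges on a reduction you miss entirely: since $A$ is \lclo-closed and (via Lemma~\ref{csup3}, the tropical Frobenius phenomenon) $(f+h)^m$ and $f^m+h^m$ share the same corner structure, one has $f+h\in\root m\of A$ precisely when $f^m+h^m\in A$. This collapses each $m$-th power to a two-term sum whose common summand is literally $h^m$, so a single application of the exchange law in $A$ to $f^m+h^m$ and $h^m+(h')^m$ immediately yields $f^m+(h')^m\in A$ and hence $f+h'\in\root m\of A$. Your approach never makes this reduction and instead carries the full binomial expansion $h^m+fP_f$, $h^m+gP_g$ through the argument.

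This leads to two genuine gaps in your proposal. First, even if the exchange law could be applied to $h^m+fP_f$ and $h^m+gP_g$, its conclusion is $fP_f+gP_g\in A$, which is \emph{not} $(f+g)^m$: the polynomials $P_f$ and $P_g$ still carry $h$ in their monomials, so you are left with an element of $A$ that bears no direct relation to the element $(f+g)^m$ you need. Your proposed follow-up manipulations with $g^m(f+h)^m=(fg+gh)^m$ etc.\ do not resolve this; they just reintroduce the same mismatch at a higher degree. Second, the support-disjointness hypothesis $\supp(fP_f)\cap\supp(gP_g)=\emptyset$ is not established and indeed can fail, as you yourself note; the two suggested workarounds (layer-boosting coincident monomials, or passing to an extension with fresh indeterminates and ``descending'') are only sketched and neither is an argument. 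In the paper's route these issues simply do not arise, because after the reduction the two elements of $A$ are the genuine two-term polynomials $f^m+h^m$ and $h^m+(h')^m$, and the support-disjointness of $f^m$ and $(h')^m$ is inherited from that of $f$ and $h'$ (the $m$-th power map on monomials being injective).
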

\begin{proof} For $f+h,\, h+h' \in \root m  \of A,$  we have  (in
view of Lemma~\ref{csup3}), $h^m + (h')^m  \in A$ and $f^m +h^m
\in A,$ implying $f^m+(h')^m\in A,$ and thus $f+h' \in \root m \of
A.$
\end{proof}

\subsection{Binomials in ideals}

To understand how binomials generate ideals, we need to see which
binomials in an ideal are consequences of the others. It is
convenient to work in $F [ \Lm , \Lm ^{-1}]$.

The following result is really about localization.

\begin{prop} The natural
  injection $\phi: F[\Lm ] \to F[\Lm ,\Lm ^{-1}]$ induces a lattice injection from
     \{\aIdeal s of~$F[\Lm ] $ not containing monomials\} to    \{\aIdeal s of $F[\Lm ,\Lm ^{-1}]$\},
     which is 1:1 on the prime \aIdeal s.
\end{prop}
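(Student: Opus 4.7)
For an m-exchange ideal $A$ of $F[\Lm]$ not containing monomials, I would define its image $\widetilde{A}$ to be the m-exchange $\lclo$-closed ideal of $F[\Lm,\Lm^{-1}]$ generated by $\phi(A)$. Because every tangible monomial is a unit in $F[\Lm,\Lm^{-1}]$, this set admits the concrete description
\[
\widetilde{A} \;=\; \bigl\{\, g \in F[\Lm,\Lm^{-1}] \,:\, m g \in \phi(A) \text{ for some tangible monomial } m \,\bigr\}.
\]
The first step is to verify that $\widetilde{A}$ is indeed an m-exchange $\lclo$-closed ideal of $F[\Lm,\Lm^{-1}]$: to test the exchange law on two elements of $\widetilde{A}$ one clears a common tangible-monomial denominator and reduces to the corresponding m-exchange instance in $A \subset F[\Lm]$.

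The lattice compatibility is then routine. For sums, $\widetilde{A_1+A_2} = \widetilde{A_1}+\widetilde{A_2}$ is immediate from the generating description. For intersections, if $g \in \widetilde{A_1} \cap \widetilde{A_2}$, picking monomials $m_i$ with $m_i g \in A_i$ and multiplying together, we find a common tangible monomial $m$ with $mg \in A_1 \cap A_2$, so $g \in \widetilde{A_1 \cap A_2}$; the reverse inclusion is trivial.

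For injectivity, the crucial identity is $\widetilde{A} \cap F[\Lm] = A$; one inclusion is obvious, and the other amounts to the saturation property: if $g \in F[\Lm]$ and $mg \in A$ for some tangible monomial $m$, then $g \in A$. Granted this, $\widetilde{A_1} = \widetilde{A_2}$ forces $A_1 = A_2$ upon contracting. For the restriction to \emph{prime} m-exchange ideals the saturation is immediate: since $A$ contains no monomials, $m \notin A$, and primality of $A$ applied to $m \cdot g \in A$ gives $g \in A$. In the converse direction, contraction of a prime m-exchange ideal of $F[\Lm,\Lm^{-1}]$ to $F[\Lm]$ is a prime m-exchange ideal that contains no tangible monomial (as these are units), so contraction is the two-sided inverse on the prime level.

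\textbf{Main obstacle.} The one step requiring real care is proving the saturation $\widetilde{A} \cap F[\Lm] = A$ for a \emph{general} m-exchange ideal $A$ without monomials. The m-exchange axiom is a substitution rule rather than a direct cancellation rule, so from $m g = m(h_1+\cdots+h_k) \in A$ one must iteratively invoke the exchange law to replace each tangible monomial $mh_i$ by $h_i$. I expect this reduction to require pairing with auxiliary elements extracted from $A$ (using $\lclo$-closure, and the hypothesis that $A$ contains no monomials to rule out degenerate cases), and this is where the proof will demand the most delicate bookkeeping; for the prime statement alone the argument is much shorter.
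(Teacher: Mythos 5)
Your proposal takes essentially the same route as the paper's proof: the map is $A \mapsto \htR A$ with $\htR := F[\Lm,\Lm^{-1}]$, which for an ideal coincides with your saturation description $\{g : sg \in A \text{ for some tangible monomial } s\}$; the exchange property transfers by clearing a common monomial denominator; and the 1:1 claim on primes is verified exactly by your argument ($sg \in A$, $s \notin A$ since $A$ contains no monomials, hence $g \in A$ by primality). Two small notes: the proposition is stated for exchange ideals, not m-exchange ideals (a terminology slip in your write-up, though the argument's structure is unaffected), and the general saturation you flag as the ``main obstacle'' is never attempted in the paper, which asserts and proves injectivity only on the prime exchange ideals — precisely the shorter argument you sketched.
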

\begin{proof} Write $\htR = F[\Lm ,\Lm ^{-1}]$; our lattice injection is to be given by $A \mapsto \htR A.$
First we check that if $A$ is an \aIdeal \, of $ F[\Lm ]$, then
$\htR A$ is an \aIdeal \,  of $\htR$.   In view of the common
denominator property, any binomial of $ \htR$ has the form $\frac
hs +\frac {h'}s$, for some monomial $s$. We need to check that if
$\htR A$ contains a binomial $\frac hs +\frac {h'}s$ and
   if $ f+\frac hs  \in \htR A$,
  then $\htR A$ also contains  $f+\frac {h'}s.$ But this is
  clear:  $A$ contains $ sf+h$, so
  $A$ contains $ sf+h' $ by the exchange law in $F[\Lm ]$, and
  we just divide by $s$.

Finally, if $\htR A_1 = \htR A_2,$ for prime \aIdeal s $A_1,A_2$
of $F[\Lm ]$, then we claim that $A_1 = A_2$. It is enough to show
that each element $f \in A_1$ belongs to $A_2.$ But $\frac{ f
}{\fone} \in \htR A_1 = \htR A_2,$ so $sf  \in A_2$ for some
monomial $s,$ implying $f \in A_2$.
\end{proof}

Localizing  makes the bookkeeping easier.
Let us look closer at how the exchange law acts on \eBin s in a
given  \bIdeal \,  $A$ of $F [ \Lm , \Lm ^{-1}]$. Cancelling out
suitable powers of the $\la _i^{\pm 1},$ we write these in the
form $\la_1 ^{i_1}\cdots \la _n^{{i_n}}+ \a,$ where $(i_1, \dots,
i_n)\in \Z ^{(n)},$ which we order under the lexicographic order
$<_{lex}$. \pSkip

\begin{rem}\label{binomgen}
 If $\la_1 ^{i_1}\cdots \la _n^{{i_n}}+ \a \in A,$ for $\a$
tangible, then dividing through by $ \a \la_1 ^{i_1}\cdots \la
_n^{{i_n}}$ yields $$\la_1 ^{-i_1}\cdots \la _n^{{-i_n}}+
\a^{-1}\in A.$$ and thus \begin{equation}\label{Laur} \a\la_1
^{-i_1}\cdots \la _n^{{-i_n}}+ \fone \in A.\end{equation} \pSkip
\end{rem}

\begin{lem} If $A$, an m-exchange ideal,
contains two given binomials $$h_\bfi = \la_1 ^{i_1}\cdots \la
_n^{{i_n}}+ \al, \qquad h_\bfj = \la_1 ^{j_1}\cdots \la
_n^{{j_n}}+ \bt,$$
 with $\al$ tangible and $(i_1, \dots, i_n) \ne (j_1, \dots, j_n)$, then $A$ contains the  binomial $\la_1
^{j_1-i_1}\cdots \la _n^{{j_n-i_n}}+\gm$.\end{lem}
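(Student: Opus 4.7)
The plan is to use the m-exchange law to merge the two given binomials by first creating a common tangible monomial summand. Since we are working in $F[\Lm, \Lm^{-1}]$ (as flagged just before the lemma), we may freely multiply by any monomial.

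First I would multiply $h_\bfi$ by the monomial $\la_1^{j_1-i_1}\cdots \la_n^{j_n-i_n}$ to align one of its monomials with the leading monomial of $h_\bfj$, yielding
\begin{equation*}
\la_1^{j_1}\cdots \la_n^{j_n} + \al\,\la_1^{j_1-i_1}\cdots \la_n^{j_n-i_n} \;\in\; A.
\end{equation*}
Now $A$ contains the two elements
\begin{equation*}
h_\bfj \;=\; \bt + \la_1^{j_1}\cdots \la_n^{j_n}, \qquad \al\,\la_1^{j_1-i_1}\cdots \la_n^{j_n-i_n} + \la_1^{j_1}\cdots \la_n^{j_n},
\end{equation*}
sharing the common summand $h := \la_1^{j_1}\cdots \la_n^{j_n}$, which is a tangible monomial (its coefficient is $\fone$ in the $1$-\semifield0 $F$). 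Setting $f := \bt$ and $g := \al\,\la_1^{j_1-i_1}\cdots \la_n^{j_n-i_n}$, the hypothesis $(i_1,\dots,i_n)\ne (j_1,\dots,j_n)$ guarantees $\supp(f)\cap \supp(g)=\emptyset$ (the exponents $(j_1-i_1,\dots,j_n-i_n)$ are not all zero, so $g$ is a nonconstant monomial while $f$ is a constant), and clearly $h\notin \supp(f)$. The m-exchange law then gives
\begin{equation*}
\bt + \al\,\la_1^{j_1-i_1}\cdots \la_n^{j_n-i_n} \;\in\; A.
\end{equation*}

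Finally, since $F$ is a $1$-\semifield0 and $\al$ is tangible, $\al$ is invertible; multiplying the above element by $\al^{-1}$ (an operation permitted on ideals of $\mcR$) yields
\begin{equation*}
\la_1^{j_1-i_1}\cdots \la_n^{j_n-i_n} + \al^{-1}\bt \;\in\; A,
\end{equation*}
so the desired binomial appears with $\gm := \al^{-1}\bt$. The only real step that requires care is verifying that the hypotheses of the m-exchange law are satisfied at the moment of its application; the hypotheses that the lemma's hypothesis supplies ($\al$ tangible, exponent tuples distinct) are precisely what is needed for the tangibility of $h$ and the disjointness of supports, so there is no serious obstacle once the set-up above is in place.
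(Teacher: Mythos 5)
Your proof is correct and is essentially the same argument as the paper's: both produce a common tangible monomial summand by multiplying one binomial by an invertible monomial and then invoke the m-exchange law, differing only in normalization (you align on $\la_1^{j_1}\cdots\la_n^{j_n}$, whereas the paper divides through so both binomials share the constant $\fone$; these two setups differ by an overall factor of $\la_1^{-j_1}\cdots\la_n^{-j_n}$).
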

\begin{proof} Applying the m-exchange law to Equation~\eqref{Laur},   $A$ also contains the binomial
$$\al \la_1
^{-i_1}\cdots \la _n^{{-i_n}} +  \beta \la_1 ^{-j_1}\cdots \la
_n^{{-j_n}} =  \al \la_1 ^{-j_1}\cdots \la _n^{{-j_n}}(  \la_1
^{j_1-i_1}\cdots \la _n^{{j_n-i_n}}+\gm),$$ where $\gm = \frac
{\beta}{\al}\in F.$\end{proof}

\begin{thm}\label{genbin} Any
set of tangibly spanned binomials is generated by at most $n$
tangibly spanned binomials. \end{thm}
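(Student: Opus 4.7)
The plan is to reduce the problem to the classical structure theorem for subgroups of $\Z^n$ via exponent vectors. First, I would normalize each tangibly spanned binomial (dividing by a suitable monomial and a tangible scalar) to the form $\la^\bfi + \alpha$ with $\bfi = (i_1,\dots,i_n) \in \Z^n$ and $\alpha \in F_1$ tangible. Let $A$ denote the \bIdeal\ of $F[\Lm, \Lm^{-1}]$ generated by the given set under the exchange property, and define
\[
M := \{\bfi \in \Z^n : A \text{ contains a tangibly spanned binomial } \la^\bfi + \alpha\}.
\]
The lemma immediately preceding the theorem shows that if $\la^\bfi+\alpha, \la^\bfj+\beta \in A$ then $\la^{\bfj-\bfi}+\beta/\alpha \in A$, while Remark~\ref{binomgen} provides the inverse binomial $\alpha \la^{-\bfi} + \fone \in A$. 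Together these show that $M$ is closed under subtraction and negation, hence is a subgroup of $\Z^n$.

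Next, I would invoke the classical fact that any subgroup of $\Z^n$ is free abelian of rank $r \le n$. Choosing a $\Z$-basis $\bfi_1, \ldots, \bfi_r$ of $M$ and, for each $k$, picking some representative $\la^{\bfi_k}+\alpha_k \in A$, gives the candidate list of $r\le n$ irredundant generators. Irredundance is automatic from the basis property: removing any $\la^{\bfi_k}+\alpha_k$ strictly lowers the rank of the exponent lattice one can reach via the exchange law, so no binomial with exponent $\bfi_k$ could then be recovered.

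To see that they really generate every tangibly spanned binomial of $A$, I would take an arbitrary $\la^{\mathbf{m}}+\beta \in A$, write $\mathbf{m}=\sum_k c_k\bfi_k$ with $c_k \in \Z$, and iterate the preceding lemma (inverting via Remark~\ref{binomgen} whenever $c_k<0$) to produce a binomial $\la^{\mathbf{m}}+\gamma$ in the sub-\bIdeal\ generated by $\la^{\bfi_1}+\alpha_1,\dots,\la^{\bfi_r}+\alpha_r$.

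The main obstacle is matching the constant term: a priori $\gamma$ need not equal $\beta$. The escape comes from Example~\ref{tropid}: if $\gamma \ne \beta$ then $A$ contains two tangibly spanned binomials with the same exponent $\mathbf{m}$ but distinct tangible constants, which forces a tangible scalar multiple of $\la^{\mathbf{m}}$ into $A$, rendering $A$ improper. In that degenerate case every binomial of $\tR$ already lies in $A$ and a single binomial suffices, so the bound of $n$ still holds. Hence in every case at most $n$ irredundant tangibly spanned binomials generate the full set, as claimed.
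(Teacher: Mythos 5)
Your proof is correct and follows essentially the same route as the paper: both identify each tangibly spanned binomial with its exponent vector in $\Z^n$, use Remark~\ref{binomgen} (and the preceding lemma) to show the set of exponent vectors is closed under subtraction and hence a subgroup, and then invoke the structure of subgroups of $\Z^n$ (the paper phrases this as Gauss--Jordan elimination; you phrase it as rank $\le n$ of a free abelian subgroup, which is the same fact). Your final paragraph explicitly handles the possible mismatch of constant terms via Example~\ref{tropid}, a point the paper simply sidesteps by ``disregarding $\gm$'', so your version is, if anything, slightly more complete.
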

\begin{proof} Label any  binomial  $\la_1 ^{i_1}\cdots \la
_n^{{i_n}}+ \gm$ by the vector $(i_1, \dots, i_n)\in \Z^{(n)}$
(disregarding $\gm$), and write~$G_A$ for the set of such  vectors
corresponding to \eBin s of the  \aIdeal \,  $A$.  By  Remark
\ref{binomgen}, $G_A$ is closed under subtraction, and thus is a
group. It follows that any set of rows of $t$~vectors in $G_A$ can
be transformed by the standard procedure of Gauss--Jordan
elimination into rows in which each of the first $t$ columns has
at most one nonzero entry. Translating back to \eBin s, we see
that  in any
 proper $m$-\aIdeal\ $A$ of $F [ \Lm , \Lm ^{-1} ]$, any
set of tangibly spanned binomials is generated by at most $n$
tangibly spanned binomials.    \end{proof}

%
%
%
%
%

%

\section{Generation of layered ideals}\label{primeideals}

We briefly considered generation of ideals  in
Definition~\ref{gen0}.


%
%
%


\subsection{Generation by irredundant binomials}

\begin{defn} A set of polynomials $\tB = \{ f_1, \dots, f_m \}$ is
\textbf{redundant} if $\tB$  belongs to the \aIdeal\ generated by
$\tB \setminus \{ f_j \},$ for some $j$. Otherwise $\tB$ is called
\textbf{irredundant}.
\end{defn}

\begin{example}\label{gen30} If $\tB =\{ h+
a,\ h+ b\}$  with $h+ a$  tangibly spanned, then either the set
$\tB$ is redundant, or $\tB$ generates all of $F$ (as an \bIdeal).

Let $A$ be the exchange ideal generated (as an exchange ideal) by
$\tB$. We demonstrate the assertion by subdividing it into four
cases:
\begin{enumerate} \eroman   \item $a  \nucong
 b$. Then $\tB$ is redundant. \pSkip
    \item
$s(b) >1 $ and $a <_\nu b$.
Then $h+ b = (h+a) + b \lmodL h+a,$ implying $h+a$  generates
$h+ b$.
\pSkip \item $s(b) >1 $ and $a >_\nu b$. By the m-exchange law,
$A$ contains $a+b=a,$ and thus contains $\fone,$ so is improper.
\pSkip \item   $b$ is tangible, with $a \not \nucong
 b$. Then by the m-exchange law, $A$  contains $a+b \in
\{a,b\}$, and thus contains $\fone,$ so is improper.
\end{enumerate}
\end{example}

\begin{thm}\label{ngener0} Suppose $F$ is 1-divisibly closed. For
every \bIdeal \,  $A$ of $ F[\Lm , \Lm ^{-1}]$ (for $\Lm  = \{
\la_1, \dots, \la _n \}$), its set of tangibly spanned binomials
is generated (via the exchange property) by a set of at most $n$
irredundant binomials of $A$.
\end{thm}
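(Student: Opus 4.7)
The plan is to reduce the statement to Theorem \ref{genbin} and then prune. Let $B$ denote the set of tangibly spanned binomials of the m-exchange ideal $A \triangleleft F[\Lm,\Lm^{-1}]$. By Theorem \ref{genbin}, $B$ is generated (via the exchange property) by some finite set $\tB_0 = \{g_1,\dots,g_m\} \subseteq B$ with $m \le n$. The question is merely whether we can trim $\tB_0$ down to an irredundant subset without losing the property that it generates $B$.

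First I would argue that redundancy is preserved under discarding. Suppose some $g_j \in \tB_0$ belongs to the m-exchange ideal generated by $\tB_0 \setminus \{g_j\}$; then replace $\tB_0$ by $\tB_0 \setminus \{g_j\}$. Any binomial previously obtainable from $\tB_0$ by iterated application of the m-exchange law is still obtainable from $\tB_0 \setminus \{g_j\}$, because wherever $g_j$ appeared in a derivation we may first derive $g_j$ itself from $\tB_0 \setminus \{g_j\}$ and then continue. Thus the new, smaller set still generates all of $B$ via the exchange property.

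Next I would iterate this pruning. Since $\tB_0$ is finite, the procedure terminates after at most $m$ steps, yielding a subset $\tB \subseteq \tB_0$ that is irredundant (no element lies in the exchange ideal generated by the others) and still generates $B$. Because $|\tB| \le |\tB_0| \le n$, this is the desired set of at most $n$ irredundant tangibly spanned binomials of $A$.

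The only nontrivial point, and the mild obstacle, is verifying that the m-exchange operations used to derive $B$ from $\tB_0$ really do pass through: one must check that whenever $g_j$ was used as an intermediate binomial in an exchange-law derivation, it can be replaced by its own derivation from $\tB_0 \setminus \{g_j\}$; but this is immediate from the definition of ``generated via the exchange property,'' since derivations are just finite sequences of applications of the m-exchange law, and substituting a derivation for a single step produces another finite sequence. All the arithmetic work is already encapsulated in Theorem \ref{genbin}, via the Gauss--Jordan reduction of the exponent-vectors in $\Z^{(n)}$.
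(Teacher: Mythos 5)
Your proposal reaches the statement by a slightly different route than the paper, and in the process trades one gap for another. You invoke Theorem~\ref{genbin} as a black box to get a finite generating set $\tB_0 \subseteq B$ with $|\tB_0| \le n$, and then run a pruning argument to extract an irredundant subset. The pruning argument is correct, and in fact it addresses something the paper leaves entirely implicit: the paper's proof never explicitly says how one obtains \emph{irredundance}, it simply says ``so we conclude with Theorem~\ref{genbin}.'' So the substitution-of-derivations step you spell out is a genuine addition.

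The omission on your side is the role of Example~\ref{gen30}, which the paper explicitly invokes and which you skip. The proof of Theorem~\ref{genbin} operates on exponent vectors in $\Z^{(n)}$ and is stated to ``disregard $\gm$''; Gauss--Jordan elimination on $G_A$ produces $n$ generating exponent vectors, and on its own says nothing about the situation where two distinct tangibly spanned binomials of $A$ share an exponent vector but carry different constants $h + a$ and $h + b$ (with $a \not\nucong b$). Example~\ref{gen30} is what reconciles this: for a proper m-exchange ideal $A$, two such binomials force $\fone \in A$, so in fact each exponent vector corresponds to at most one tangibly spanned binomial of~$A$, and passing from the $n$ generating vectors back to $n$ generating binomials is justified. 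Your line ``By Theorem~\ref{genbin}, $B$ is generated \dots\ by some finite set $\tB_0 = \{g_1,\dots,g_m\} \subseteq B$'' is therefore a bit too quick; you should either record this observation or at least note that distinct tangibly spanned binomials of a proper $A$ have distinct exponent vectors, so that the Gauss--Jordan argument of Theorem~\ref{genbin} really does yield elements of $B$.
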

\begin{proof}  Take some  polynomial $f = \sum \a_{\bold i} \La ^{\bold i} + g$
in $A$, where all the $\a_{\bold i}$ are tangible, and $g\in
\tG[\Lm ,\Lm ^{-1}]$. Let $f_{{\bold i}, {\bold j}} =\La ^{{\bold
i}-{\bold j}} + \frac{ \a_{\bold j}}{\a_{\bold i}},$ taken over
all (finitely many) ${{\bold i}, {\bold j}}$ such that $\a_{\bold
i}, \a_{\bold j}\ne \fzero.$ Then Example \ref{gen30} likewise
shows that for any  fixed tangible monomial $h$, any finite set
$\{ h + \al\}$ of binomials (where
 each $\al \in F$ is tangible) is generated
by a single one of them, so we conclude with Theorem~\ref{genbin}.
\end{proof}

\subsection{Factorization of binomials}

To decompose binomials further, we say that a polynomial $f $ is
$L$-\textbf{reducible} if there are polynomials $g,h$ of degree
$\ge 1$ such that $gh \lmodWLnu f$.

\begin{lem} When $F$ is 1-divisibly closed and $\a \in F_1$, the binomial $\la_1 ^{i_1}\cdots \la _n^{{i_n}}+ \al$
is $L$-irreducible iff the integers $i_1, \dots, i_n$ are relatively
prime.
\end{lem}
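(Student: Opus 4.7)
The plan splits into the two implications; I would start with the easier sufficiency of non-coprimeness.

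For the direction ``$\gcd(i_1, \dots, i_n) \ge 2 \Rightarrow \Lm^\bfi + \al$ is $L$-reducible,'' set $d := \gcd(i_1, \dots, i_n)$ and $\bfi' := (i_1/d, \dots, i_n/d) \in \Z^{(n)}$, so that $\Lm^\bfi = (\Lm^{\bfi'})^d$. Since $F$ is 1-divisibly closed and $\al \in F_1$, choose $\beta \in F_1$ with $\beta^d = \al$. I propose the factors $g := \Lm^{\bfi'} + \beta$ and $h := (\Lm^{\bfi'} + \beta)^{d-1}$, both of degree $\geq 1$ because $d \ge 2$. The claim $gh \lmodWLnu \Lm^\bfi + \al$ reduces (under $x := \Lm^{\bfi'}$) to the one-variable surpassing identity $(x+\beta)^d \lmodWLnu x^d + \beta^d$. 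For $d=2$ this is an immediate application of Lemma~\ref{twosurp} with $a := x^2 + \beta^2$ and $c := x\beta$, since $(x+\beta)^2 = a + \2c$; for $d \ge 3$ I would induct, using $(x+\beta)^d = (x+\beta)^{d-1}(x+\beta)$ together with Lemma~\ref{twosurp} applied to the middle cross-terms.

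For the converse (``$\gcd(i_1, \dots, i_n) = 1 \Rightarrow$ $L$-irreducible''), I argue by contrapositive. Suppose $gh \lmodWLnu \Lm^\bfi + \al$ with $g, h$ of degree $\ge 1$. I first rule out the case where one factor is a monomial $c \Lm^\bfj$ with $\bfj \ne 0$: then every essential monomial of $gh$ would inherit the exponent floor $\bfj$ coordinatewise, incompatible with the constant term $\al$ of $\Lm^\bfi + \al$. So both $g$ and $h$ have support of size $\ge 2$. The $\nucong$ content of $\lmodWLnu$ forces $gh \nucong \Lm^\bfi + \al$ as tropical functions, and Corollary~\ref{uniquede} then identifies the convex hull of the essential support of $gh$ with that of $\Lm^\bfi + \al$, namely the segment $\operatorname{conv}\{0, \bfi\} \subset \Z^{(n)}$.

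The key structural step is the tropical Minkowski identity $N(gh) = N(g) + N(h)$ for Newton polytopes of essential supports, which I derive by tracking dominant monomials along paths $\gm_{\bfa,\bfb}$ via Lemma~\ref{multmon}: extreme monomials of $g$ and $h$ multiply to extreme monomials of $gh$. Since a Minkowski decomposition of a segment forces collinear subsegments, both $N(g)$ and $N(h)$ are subsegments along the direction of $\bfi$; after translating their minimal vertices to the origin, the integer exponents of $g$ and $h$ lie on the one-dimensional lattice $\Z \cdot \bfi'$, where $\bfi' := \bfi/d$ is primitive with $d = \gcd(i_1, \dots, i_n)$. Setting $\mu := \Lm^{\bfi'}$, both $g$ and $h$ become polynomials in $\mu$ of $\mu$-degree $\ge 1$ (neither being a monomial), with $\deg_\mu(g) + \deg_\mu(h) = d$, forcing $d \ge 2$. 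The main obstacle will be formalizing the Newton-polytope / Minkowski-sum compatibility in the paper's layered language; this should follow from combining Theorem~\ref{uniquedec} (uniqueness of essential decomposition on components) with Lemma~\ref{multmon} (dominance propagation along paths), and layer subtleties should not interfere since the structural argument is entirely at the $\nu$-level.
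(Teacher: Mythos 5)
Your forward direction is exactly the paper's: for $d = \gcd(i_1,\dots,i_n) \geq 2$, extract $\beta = \sqrt[d]{\al}$ via $1$-divisible closure and verify $(\Lm^{\bfi/d}+\beta)^d \lmodWLnu \Lm^\bfi+\al$ by discarding the cross-terms (all of layer $\geq 2$) through the surpassing relation; the paper writes the same displayed inequality without breaking the power into $g\cdot g^{d-1}$, but that is cosmetic. For the converse, the paper offers only a one-sentence sketch --- ``the product of polynomials can be a binomial iff all of the intermediate terms are inessential, which cannot happen when the exponents are relatively prime'' --- whereas you spell out the underlying mechanism: the Minkowski-sum identity for Newton polytopes of essential supports (justified through Theorem~\ref{uniquedec} and Lemma~\ref{multmon}) forces both factors to have support on a segment in direction $\bfi' = \bfi/d$, so writing them as polynomials in $\mu = \Lm^{\bfi'}$ gives $\deg_\mu g + \deg_\mu h = d$ with both summands $\geq 1$. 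This is the natural formalization of the paper's assertion and is sound. Two small things to tighten: your exclusion of a monomial factor by the ``exponent floor'' argument is phrased for $F[\Lm]$ (nonnegative exponents); if one works instead in $F[\Lm,\Lm^{-1}]$ (as the surrounding subsection sometimes does) the right statement is that monomials are units, so the genuine nontriviality condition is $|\supp(g)|, |\supp(h)| \geq 2$ rather than ``degree $\geq 1$.'' Also, for $d \geq 3$ the cross-terms carry layers higher than $2$, so you want the observation that any $c$ of layer $\geq 2$ is a $1$-ghost (since $L = L_{\geq 1}$ is closed under addition), rather than a literal double application of Lemma~\ref{twosurp}; the conclusion is unchanged.
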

\begin{proof}$(\Rightarrow)$ By the contrapositive. Assume $d$ divides each $i_1, \dots, i_n.$ Then
$$ \bigg(\la_1 ^{\frac{i_1}d}\cdots \la _n^{\frac{i_n}d}+ \root d \of \al\bigg)^d\lmodWLnu\la_1 ^{i_1}\cdots \la _n^{{i_n}}+ \al . $$

$(\Leftarrow)$ The product of polynomials can be a binomial iff
all of the intermediate terms are inessential, which cannot happen
when the exponents are relatively prime. \end{proof}

\begin{prop}\label{factorbin} If $F = \clF$ and $f\in F \pl \Lm  \pr$ is a
 \eBin, then $f$ can be $L$-factored as a product of a
monomial $h$ times a power $g^m$ of an $L$-irreducible binomial,
in the sense that $hg^m \lmodWLnu f.$ \end{prop}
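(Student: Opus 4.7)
The plan is to factor out the common monomial part of $f$ and then present the remaining binomial as a $d$-th power of an $L$-irreducible binomial, where the $d$-th root is obtained via the 1-divisible closedness of $F$.

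First I would write $f = \alpha \Lm^{\bfi} + \beta \Lm^{\bfj}$ and, assuming $\bfi \ne \bfj$ (otherwise $f$ is a monomial and the statement is trivial), set $\bfk := \bfi \wedge \bfj$ (componentwise minimum) and $h := \Lm^{\bfk}$, so that
\[
f \;=\; h \cdot f', \qquad f' \;=\; \alpha \Lm^{\mathbf{p}} + \beta \Lm^{\mathbf{q}},
\]
where $\mathbf{p} := \bfi - \bfk$ and $\mathbf{q} := \bfj - \bfk$ have disjoint supports. Then I would let $d$ be the positive gcd of the nonzero entries of $\mathbf{p}$ and $\mathbf{q}$, put $\mathbf{p}' := \mathbf{p}/d$ and $\mathbf{q}' := \mathbf{q}/d$, and use the 1-divisible closedness of $F = \clF$ to produce $\alpha', \beta' \in F$ with $(\alpha')^d = \alpha$ and $(\beta')^d = \beta$. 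The candidate $L$-irreducible binomial is $g := \alpha' \Lm^{\mathbf{p}'} + \beta' \Lm^{\mathbf{q}'}$.

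To check that $hg^d \lmodWLnu f$, I would expand
\[
g^d \;=\; \alpha \Lm^{\mathbf{p}} + \beta \Lm^{\mathbf{q}} + \sum_{k=1}^{d-1} \tbinom{d}{k}(\alpha')^{d-k}(\beta')^{k}\Lm^{(d-k)\mathbf{p}' + k\mathbf{q}'} \;=\; f' + c,
\]
and show the middle-term sum $c$ is negligible under $\lmodWLnu$. At any $\bfa$, the $\nu$-value of the $k$-th middle monomial is the weighted geometric mean of the $\nu$-values of $\alpha\Lm^{\mathbf{p}}(\bfa)$ and $\beta\Lm^{\mathbf{q}}(\bfa)$ (on the log-tropical scale), hence bounded by their $\nu$-maximum; off the corner locus of $f'$ this bound is strict, so $c(\bfa)$ is $\nu$-dominated by $f'(\bfa)$ and $g^d(\bfa) = f'(\bfa)$. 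On the corner locus the middle monomials become $\nu$-equivalent to the extremes, but the coefficient $\tbinom{d}{k} \ge 2$ lets one rewrite $\tbinom{d}{k} c_k = \2 c_k + (\tbinom{d}{k}-2) c_k$, so repeated applications of Lemma~\ref{twosurp} yield $g^d \lmodWL f'$, while $g^d \nucong f'$ holds by the pointwise $\nu$-value computation. Multiplying by the monomial $h$ preserves both relations. That $g$ is $L$-irreducible follows by the pattern of the reverse implication in the preceding lemma: any factorization $g \lmodWLnu g_1 g_2$ with each $\deg g_i \ge 1$ would force all intermediate monomials of $g_1 g_2$ to be inessential, which is impossible once the nonzero coordinates of $(\mathbf{p}', \mathbf{q}')$ have $\gcd$ equal to $1$.

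The main obstacle is the layer bookkeeping at the corner locus: in the standard supertropical case the ghost layers are absorbing and the ghost condition is automatic, but for a general $L$ one must confirm that each coefficient $\tbinom{d}{k} \ge 2$ genuinely lifts the layer of the middle terms strictly above $s(f')$, which is exactly what Lemma~\ref{twosurp} accomplishes term by term. A secondary technicality is the non-tangible case: if $\alpha = e_\ell \alpha_1$ with $\ell \in L$ not a $d$-th power, then extracting $\alpha' \in F$ with $(\alpha')^d = \alpha$ may require first pulling the layered factor $e_\ell$ into $h$, or accepting a layered discrepancy at the $k = 0$ extreme of $g^d$ that is then absorbed by the surpassing relation $\lmodWLnu$.
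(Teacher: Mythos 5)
Your argument is essentially the paper's: pull out a common monomial, write the reduced exponent vector as $d$ times a primitive vector, take $d$-th roots of the coefficients (via 1-divisible closedness, or $F=\clF$), and absorb the intermediate terms of the $d$-th power under $\lmodWLnu$ via Lemma~\ref{twosurp}, with $L$-irreducibility of $g$ coming from the preceding lemma on relatively prime exponents. The paper normalizes a bit differently (passing to $F[\Lm,\Lm^{-1}]$, dividing by $\Lm^{\bfj}$ and by $\beta$ to reduce to the form $\alpha\Lm^{\bfi}+\fone$, then invoking the preceding lemma directly) whereas you stay in $F[\Lm]$ and verify the binomial-expansion absorption explicitly, and you correctly flag the non-tangible coefficient wrinkle that the paper handles only implicitly through its hypothesis on $F$; but the substance is the same.
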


\begin{proof} Let us write $f = \a \Lm^\bfi + \beta \Lm^\bfj$.
Factoring out $\beta,$ we may assume that $\beta = \fone.$ It is
convenient to work in $F \pl \Lm , \Lm ^{-1} \pr$, since then we
may divide by $\Lm^\bfj$ and assume that $f$ has the form $\a
\Lm^\bfi +\fone$. We are done unless the full closure of $f$ has
some monomial on the line connecting $\bfi$ to $(0,\dots, 0)$. In
other words,   $\bfi = m\bfk$ for suitable $m\in \Net $ and $\bfk$ . But then $
{\a} \La^{\bfi}$,  a monomial of $f$,   is the $m$-th power
of $  \root m \of {\a}\La^{\bfk}$. We conclude using the
lemma.\end{proof}

\subsection{Layered generation of polynomials}

We conclude this section with an explicit discussion of generation
of polynomials in one indeterminate, relying heavily on \cite{IzhakianRowen2007SuperTropical}.

 \begin{example}\label{linear2} $ $ Suppose $F$ is a  layered 1-\semifield0, and $a,b \in
 F$
 with $b \lmodL a$.  \pSkip
\begin{enumerate} \eroman
    \item
 The $\lmodL$-closed ideal generated by $\la + a$  contains $\la +
b $; this is clear if $b \nucong a,$ so we assume that $b \ne a,$
in which case $\la + b  = (\la + a) + b  $.\pSkip
\item
 Any ideal containing $f_1 =  \la + a$ and $f_2  = \xl {\la}\ell +
 b$ ($\ell$ arbitrary) also contains $\lm + c$ for all $c \in \tT $ with $a
 < _\nu c   <_\nu  b ,$ since
 $$\la +c = (\la  + a) + \frac {c}{b}\bigg(\xl {\la}\ell +
 b\bigg).$$

  \item  If $a_1  <_\nu a_2  \le _\nu  b,$
 then the polynomial $\la ^2 + \xl {b}\ell \la + a  _1b
 $ is contained in
 the  \lclo-closed radical ideal generated by
 $\la ^2 + b  \la + a_2b   $, as seen by the Nullstellensatz (Theorem~\ref{Null3}) or by
 direct computation:
$$(\la ^2 + b  \la + a_1b )^2 \lmodL \left(\la ^2 + b \la + a  _2b
\right )\bigg(\la^2+ \frac {a_1b } {a_2} \la + \frac {a_1^2b
}{a_2}\bigg).
$$
\end{enumerate}
 \end{example}

But taking $\lmodL$-closed ideals usually is not enough for our
purposes, and we consider a more restrictive property in
Section~\ref{HB}.

\begin{rem} If the point $a$ is a tangible, isolated corner root of an essential
 polynomial
$f = \sum \a_i \la^{i},$
 then for some~$j$ we have $$a = \frac{\a_{j}}{\a_{j-1}},$$ with
$\a_{j-1},$ $\a_{j},$ and $\a_{ {j+1}}$ all tangible. Thus, $\la+a$
divides $f$, in view of
\cite[Proposition~8.40]{IzhakianRowen2007SuperTropical}. \end{rem}

 \begin{example}\label{gen3} The set $\tB = \{ h+ \al,\ h^{-1} +
 \xl{\beta}{\ell}  \}$ is redundant iff $\al^{-1} \le_\nu  \beta$.  The set
of binomials $$\{\la ^2 + \al\la = \la(\la + \al), \  \xl{\al}{\ell} \la +
\gm  = \la\gm  (\la ^{-1} + \gm ^{-1} \xl{\al}{\ell})\}$$  is redundant
iff $\al^{-1}\le _\nu \gm ^{-1} \al,$ i.e., $\gm  \le _\nu \al
^2$.
\end{example}

We call a binomial $h+h'$  \textbf{half-ghost} if $h$ is tangible
and $h'$ is ghost.

\begin{thm}\label{ngener} Suppose $F$ is 1-divisibly closed. For every radical
\bIdeal \,  $A$ of $ F[\Lm , \Lm ^{-1}]$ (for $\Lm  = \{ \la_1,
\dots, \la _n \}$), the set of binomials  of $A$ is generated
(also using the exchange property) by at most $2n$ irredundant
binomials, at most $n$ of which are tangibly spanned (with the
rest half-ghost).
\end{thm}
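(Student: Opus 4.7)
My plan is to extend Theorem~\ref{ngener0}, which supplies the tangibly spanned generators, by a parallel Gauss--Jordan reduction for the half-ghost binomials of $A$; the fully ghost binomials will then follow as consequences of these via the $\lmodL$-closure and the m-exchange law.

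First I apply Theorem~\ref{ngener0} to produce at most $n$ irredundant tangibly spanned binomials $T_1,\dots,T_n\in A$ that generate, under m-exchange, every tangibly spanned binomial of $A$. For the half-ghost binomials, I normalize each $\alpha\Lm^\bfi + g\Lm^\bfj \in A$ ($\alpha$ tangible, $g$ ghost) by multiplying with the unit $\alpha^{-1}\Lm^{-\bfi} \in F[\Lm,\Lm^{-1}]$ to put it in the form $\fone + g'\Lm^\bfk$, with $g'$ ghost and $\bfk = \bfj - \bfi \in \Z^{(n)}$. The set $G_H \subseteq \Z^{(n)}$ of such exponent vectors is closed under negation (by the argument of Remark~\ref{binomgen}) and under subtraction (by m-exchange with the common tangible monomial $h = \fone$ followed by re-normalization), so $G_H$ is a subgroup. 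Gauss--Jordan elimination, exactly as in the proof of Theorem~\ref{genbin}, extracts at most $n$ irredundant half-ghost binomials $H_1,\dots,H_m$ ($m \le n$) whose exponent vectors generate $G_H$; combined with the $T_i$ and the m-exchange property, they produce every half-ghost binomial of $A$.

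For fully ghost binomials: any $g_1\Lm^{\bfk_1} + g_2\Lm^{\bfk_2} \in A$ with $\bfk_1,\bfk_2 \in G_H$ appears as an m-exchange (with $h = \fone$) of two normalized half-ghost binomials, while any ghost image $\nu(t_1\Lm^\bfi + t_2\Lm^\bfj)$ of a tangibly spanned binomial already in the ideal generated by the $T_i$ is obtained by $\lmodL$-closure. Any remaining case is handled by the radical hypothesis: passing to a suitable power and re-applying $\lmodL$-closure reduces such a binomial to a consequence of the $T_i$ and $H_j$.

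The principal obstacle will be this last step---confirming that every fully ghost binomial in $A$ really arises as a consequence of the $T_i$ and $H_j$. The m-exchange law requires a common tangible monomial, so it does not apply directly between two fully ghost binomials; one must pass through half-ghost intermediaries and exploit radicality to reduce powers. A secondary issue, illustrated by Example~\ref{gen30}, is verifying the irredundancy of the $H_j$ modulo the $T_i$ and the other $H_j$'s, which amounts to checking that the Gauss--Jordan pivots for $G_H$ remain independent after projection modulo the analogous subgroup of exponents for the $T_i$'s.
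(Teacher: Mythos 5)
The step that fails is your claim that the set $G_H$ of half-ghost exponent vectors is a subgroup of $\Z^{(n)}$. Remark~\ref{binomgen}'s negation argument divides through by the \emph{tangible} coefficient $\a$; for a normalized half-ghost binomial $\fone+g'\Lm^\bfk$ the coefficient $g'$ is ghost, and ghost elements are not units of $F[\Lm,\Lm^{-1}]$ (the paper notes that the units are exactly the tangible monomials), so no analogous division produces a half-ghost binomial with exponent $-\bfk$ from one with exponent $\bfk$. The closure-under-subtraction argument fails for the same reason: m-exchanging $\fone+g_1\Lm^{\bfk_1}$ against $\fone+g_2\Lm^{\bfk_2}$ yields $g_1\Lm^{\bfk_1}+g_2\Lm^{\bfk_2}$, which is fully ghost with no tangible monomial against which to re-normalize; the division you would need is again by a ghost monomial. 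With $G_H$ not (in general) a group, your Gauss--Jordan step has no footing and the bound of $n$ on half-ghost generators is not obtained. The treatment of fully ghost binomials has the further difficulty you already flagged, and note additionally that $(\fone+g\Lm^\bfk)^m$ acquires essential middle terms at the corner, so passing to powers does not stay within binomials.

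The paper's proof, though terse, takes a different route that avoids needing a group structure on half-ghost exponents: after Theorem~\ref{ngener0} supplies $\le n$ tangibly spanned generators $h+\a$, Example~\ref{gen30}(iii) shows that a half-ghost binomial sharing the same tangible monomial as such a generator is redundant with it (or forces $A$ to be improper), while Example~\ref{gen3} shows the one genuinely new possibility is a companion $h^{-1}+g$ with $g$ ghost. Thus each of the $\le n$ tangible directions contributes at most one extra half-ghost generator, giving the $2n$ bound directly rather than via a second Gauss--Jordan reduction.
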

\begin{proof}  We start with Theorem~\ref{ngener0}, which gives us at most $n$  irredundant tangibly
spanned binomials. But  Example \ref{gen3} shows that when the
constant term is ghost, we might be able to adjoin a binomial
involving $h^{-1}.$ Applying this observation to Remark
\ref{binomgen}(iii) shows that any irredundant set of binomials of
$A$ has at most $2n$ elements.
\end{proof}

\section{Analogs of classical theorems from commutative algebra}\label{HB}

We turn now to the layer generation of corner ideals of the
polynomial semiring $\mathcal R := \Pol(F^{(n)},F)$. Two of the
cornerstones of ideal theory are the Principal Ideal Theorem, that
every ideal of $\mathcal R$ is principal, and Hilbert's Basis
Theorem, that every ideal of $\mathcal R$  is finitely generated.
We focus on   the tropical analogs. Let us commence with some
problematic examples, even in the standard supertropical case in
one indeterminate. Despite these examples, we will obtain positive
results when restricting our attention to   those ideals related
to tropicalization.

\begin{example}\label{quadout1} $ $ (The standard supertropical case)
\begin{enumerate}
\item Suppose $f _1 =  \la^2 + 5^\nu + 7 , $ $f _2 =
\la^2 + 4.9^\nu + 6.95,$ $f _3 =  \la^2 + 4.89^\nu + 6.945, \dots
$ all in $\mcR.$ The respective ghost loci have tangible parts
$[2,5] \supset [2.05, 4.9] \supset [2.055, 4.89] \supset \cdots$
which are decreasing, but, for each $i$, $f_i$ does not ghost
surpass $f_{i+1}.$ The ideal comprised of those polynomials whose
tangible corner locus is the intersection of these tangible
intervals, is not f.g. (Also, it is not an \aIdeal.) \pSkip

\item Likewise, take $f _i =  \la^2 + 5^\nu + a_i$ where $a_1 = 7
<_\nu a_2 <_\nu \dots  <_\nu 9.$ Again, the respective ghost loci
decrease, and if $a_i \to _\nu 9,$ then the $f_i$ generate a prime
ideal of $\mcR.$

\end{enumerate}
\end{example}

 In several indeterminates,
we can make Example~\ref{quadout1} even worse.

\begin{example}\label{quadout2} The ideal of Example~\ref{infgen0}(2)  is not   f.g.
Note that  $\tZ_{\tng}(f_k)$ is comprised of three rays, two being
the ``bent line'' $\tC$ comprised of rays to the left and beneath
$(0,0)$, and the third being a  ray in the upper right quadrant
whose slope depends on $k$. Hence, $\bigcap _i \tZ_{\corn}(f_i)$
is just $\tC$ (which is not a tropical curve in the usual sense).
\end{example}


\begin{example} The polynomials $f = (\la _1 +c)\la_2^2 + 100 \la_2 + 105$
for $c$ $\nu$-small all have tangible corner loci whose
intersection is given by $\bfa = (a_1, a_2)$ with $a_2 = 5,$ and
define an infinite ascending sequence of ideals.\end{example}

\subsection{Partial positive results involving geometric properties of ideals}

We can bypass these examples by imposing more stringent geometric
considerations. Here is some easy information garnered in the
standard supertropical case ($L = \{1,\infty\}$).

\begin{prop}\label{tang1} If $ a_1,
\dots, a_n $ are roots of $f\in F[\la]$ in distinct components,
then $$f \lmodL  (\la +a_1)\cdots (\la +a_n)h $$ for some $h\in
F[\la].$
\end{prop}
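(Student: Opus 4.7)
The approach is to reduce to the tangibly spanned case, where Lemma~\ref{tang11} furnishes an exact divisibility, and then lift the resulting factorization back to a surpassing statement for $f$ itself.

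First, I would replace $f$ by its essential decomposition $\ef$, which does not change $f$ as a function nor alter which points are corner roots. Next, I would form a ``tangible companion'' $\tilde f$ of $\ef$: writing $\ef = \sum_k \a_k \la^{i_k}$ and expressing each coefficient uniquely as $\a_k = e_{m_k}\a_k^{(1)}$ with $\a_k^{(1)} \in F_1$, set $\tilde f := \sum_k \a_k^{(1)} \la^{i_k}$. By construction $\tilde f$ is tangibly spanned, has the same support as $\ef$, and each of its coefficients is $\nu$-equivalent to the corresponding coefficient of $\ef$.

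The hypothesis that the $a_i$ are roots lying in distinct components forces them to be $\nu$-distinct tangible corner roots of $\ef$. Since the corner support at a point is determined by the $\nu$-values of monomial evaluations, the $a_i$ remain tangible corner roots of the tangibly spanned $\tilde f$. Lemma~\ref{tang11} then yields $h \in F[\la]$ with $\tilde f = (\la+a_1)\cdots(\la+a_n)\,h$.

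It remains to verify $f \lmodL \tilde f$ pointwise, for then $f \lmodL \tilde f = (\la+a_1)\cdots(\la+a_n)\,h$ as required. For each $x \in F$, the monomial evaluations of $\ef$ and $\tilde f$ at $x$ are pairwise $\nu$-equivalent, so $\ef(x) \nucong \tilde f(x)$; moreover the layer $\lv(\ef(x))$ dominates $\lv(\tilde f(x))$, because tangibilizing a coefficient can only lower the accumulated layer contributed to the sum at $x$. The main obstacle I foresee is showing that this layer excess is genuinely an $\lv(\tilde f(x))$-ghost correction in the precise sense of Definition~\ref{surmor1}. In the standard supertropical case $L = \{1,\infty\}$ this is immediate (any strict layer increase forces ghost), but in the general layered setting the ties at the corner roots must be handled carefully, for which Lemma~\ref{twosurp} supplies the required absorption. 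Once this local layered bookkeeping is dispatched, the remainder of the argument is essentially assembly.
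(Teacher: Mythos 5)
The central move of your argument—replacing $f$ by a tangibly spanned companion $\tilde f$ and invoking Lemma~\ref{tang11}—breaks down at exactly the point where the proposition has real content. You assert that ``the $a_i$ are roots lying in distinct components forces them to be $\nu$-distinct tangible corner roots of $\ef$,'' but this is not so. A ``root'' here is a ghost root, and when $f$ has a ghost coefficient, its ghost locus can contain an entire interval (a ``cluster''), the interior points of which are ghost roots lying in a single component $D_{f,i}$ without being corner roots. Tangibilizing the coefficients collapses that interval to its two endpoints: for instance, in $D(\Real)$ the polynomial $f = \la^2 + 5^\nu\la + 4$ has ghost locus $[-1,5]$, so $a = 2$ is a root of $f$ in the middle component, whereas $\tilde f = \la^2 + 5\la + 4$ has ghost locus $\{-1,5\}$ only, and $2$ is not a corner root of $\tilde f$. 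Lemma~\ref{tang11} therefore cannot produce the factor $(\la + 2)$ for $\tilde f$, and no amount of care with the layer bookkeeping in $f \lmodL \tilde f$ rescues the divisibility step.

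The paper's proof goes in the opposite direction: rather than tangibilizing $f$, it factors $f$ itself (via Theorems~8.41 and 8.47 of \cite{IzhakianRowen2007SuperTropical}) into linear factors for genuine corner roots and \emph{quadratic semitangibly-full} factors $\la^2 + b_i^\nu\la + b_ic_i$ for each cluster, then observes the pointwise inequality $\la^2 + b_i^\nu\la + b_ic_i \lmodL (\la + a_i)(\la + \tfrac{b_ic_i}{a_i})$ for any $a_i$ with $c_i \le_\nu a_i \le_\nu b_i$, which peels off a tangible linear factor from each quadratic. That quadratic factor, with its ghost linear coefficient, is the structural ingredient your $\tilde f$ discards, and it is precisely what lets the argument capture roots lying strictly inside a ghost interval. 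To repair your approach you would need to preserve, not remove, those ghost coefficients—at which point you would be reconstructing the factorization the paper cites.
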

\begin{proof} As can be seen via
\cite[Theorem 8.41 and Proposition
8.47]{IzhakianRowen2007SuperTropical}, $f$ factors as $$f = (\la +
a_{i_1}^\nu)(\la^\nu + a_{i_2})\prod _{i \in I_1}(\la + a_i) \prod
_{i \in I_2}(\la^2 + b_i^\nu \la + b_i c_i)$$ where $c_i \le_\nu
a_i \le_\nu b_i;$ here $I_1$ indexes the ``corner roots'' of $f$
and $I_2$ indexes sets  of the ``cluster roots'' (other than $
a_{i_1}, a_{i_2}$).

  But by inspection   $$\la^2 + b_i^\nu \la + b_i c_i \lmodL (\la + a_i)
  \bigg(\la + \frac {b_ic_i}{a_i}\bigg),$$
so letting $h = \prod _{i \in I_2} (\la + \frac {b_ic_i}{a_i}),$
we have $$\prod _{i \in I_2}(\la^2 + b_i^\nu \la + b_i c_i) \lmodL
\prod _{i \in I_2}(\la +a_i) h   ,$$ implying $f \lmodL (\la
+a_1)\cdots (\la +a_n)h$.
\end{proof}

A  polynomial of the form $f = a_n\la^n + a_i^\nu \la^{n-1} +
\dots + a_1^\nu \la + a_0$ is \textbf{semitangibly-full} when it
has no inessential monomials \cite[Definition
8.29]{IzhakianRowen2007SuperTropical}.

\begin{lem}\label{tang11}  Suppose $f\in F[\la].$
If $[a,b]$ is a closed component of  $\tZ_{\tng}(f) $, then $\la^2
+ b^\nu + ab$ divides
 $f$. \end{lem}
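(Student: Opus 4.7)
The plan is to apply the supertropical factorization of $f$ from \cite[Theorem~8.41 and Proposition~8.47]{IzhakianRowen2007SuperTropical}, already invoked in the proof of Proposition~\ref{tang1}. This writes $f$, up to $\lmodL$, as a product of linear factors (corner $\la + a_i$ or half-ghost $\la + a_i^\nu$, $\la^\nu + a_i$) together with \emph{cluster} quadratic factors of the form $\la^2 + b_j^\nu \la + b_j c_j$ with $c_j \le_\nu b_j$. Each cluster quadratic has tangible ghost locus exactly $[c_j, b_j]$; each corner linear factor contributes a single ghost point; each half-ghost linear factor contributes an unbounded tangible ghost ray.

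If $a = b$ there is nothing to show, since then $\la^2 + b^\nu \la + ab \lmodL (\la+a)^2$ and $\la+a$ occurs as a factor. Otherwise, since $[a,b]$ is a \emph{bounded} closed component of $\tZ_{\tng}(f)$, it cannot meet any unbounded ray coming from a half-ghost factor (such a ray would force the component to extend past $a$ or $b$). Therefore $[a,b]$ equals the union of those cluster intervals $[c_j,b_j]$ contained in $[a,b]$, possibly together with isolated interior corner-linear contributions. Indexing the relevant cluster factors in increasing order of $c_j$, we may take $c_1 = a$, $b_k = b$, and $c_{j+1} \le_\nu b_j$ for the chain to be connected.

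The crux is the direct identity, valid whenever $c_1 \le_\nu c_2 \le_\nu b_1 \le_\nu b_2$:
\begin{equation*}
(\la^2 + b_1^\nu \la + b_1 c_1)(\la^2 + b_2^\nu \la + b_2 c_2)
 = (\la^2 + b_2^\nu \la + c_1 b_2)(\la^2 + b_1^\nu \la + b_1 c_2),
\end{equation*}
verified by expanding both sides to
$\la^4 + b_2^\nu \la^3 + (b_1 b_2)^\nu \la^2 + (b_1 b_2 c_2)^\nu \la + b_1 b_2 c_1 c_2$.
Applying this identity inductively along the chain $j = 1, \dots, k$ (swapping the outermost $c_1$ into the first factor at each step and sliding $b_k$ up), one rewrites
$\prod_{j=1}^k (\la^2 + b_j^\nu \la + b_j c_j) \lmodL (\la^2 + b^\nu \la + ab)\, Q$
for a suitable product $Q$ of $k-1$ compatible cluster quadratics. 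Combined with the remaining factors of $f$, this exhibits $\la^2 + b^\nu \la + ab$ as a divisor of $f$.

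The main obstacle is the bookkeeping in the inductive swap, specifically ensuring that no half-ghost linear factor of $f$ contributes a ghost ray intersecting $[a,b]$; once this is ruled out by the maximality of the component, the identity above carries through cleanly and the induction terminates with $\la^2 + b^\nu \la + ab$ extracted as the leftmost factor.
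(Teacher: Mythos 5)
Your approach is genuinely different from the paper's. The paper first invokes~\cite[Theorem~8.35]{IzhakianRowen2007SuperTropical} to factor $f$ into semitangibly-full polynomials (in the sense of their Definition~8.33), reducing to the case that $f$ itself is semitangibly-full with ghost locus containing $[a,b]$, and then extracts the quadratic $\la^2 + b^\nu\la + ab$ in one step via~\cite[Proposition~8.46]{IzhakianRowen2007SuperTropical}. You instead re-use the full linear/quadratic factorization from \cite[Theorem~8.41, Proposition~8.47]{IzhakianRowen2007SuperTropical} that already appears in the proof of Proposition~\ref{tang1}, and then consolidate the chain of cluster quadratics by an explicit swap identity. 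I verified the identity: both sides expand to $\la^4 + b_2^\nu \la^3 + (b_1 b_2)^\nu \la^2 + (b_1 b_2 c_2)^\nu \la + b_1 b_2 c_1 c_2$ under $c_1 \le_\nu c_2 \le_\nu b_1 \le_\nu b_2$, and the resulting factors $\la^2 + b_2^\nu\la + c_1 b_2$ and $\la^2 + b_1^\nu\la + b_1 c_2$ are again valid cluster quadratics, so the induction propagates. Your argument also correctly rules out half-ghost linear factors by boundedness. The tradeoff is that your route is more computational and requires more bookkeeping (selecting the irredundant subchain so that the $c_j$ and $b_j$ are simultaneously increasing; you gesture at this but don't spell it out), whereas the paper's route packages the consolidation into the notion of a semitangibly-full factor.

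Two small caveats. First, your treatment of the degenerate case $a=b$ does not actually work as written: the presence of $\la + a$ as a single linear factor does not give $(\la+a)^2 = \la^2 + a^\nu\la + a^2$ as a divisor of $f$ (consider $f = (\la+1)(\la+3)$ with $[a,b]=\{1\}$). That case must be excluded, which the paper's formulation implicitly does by treating $[a,b]$ as a genuine tangible interval. Second, you should explicitly note that after discarding cluster intervals contained in other cluster intervals, ordering the remaining ones by $c_j$ automatically orders them by $b_j$ as well, which is what licenses each application of the swap identity. With these points repaired, your proof is a correct and more self-contained alternative to the paper's.
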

\begin{proof}
 Applying \cite[Theorem 8.35]{IzhakianRowen2007SuperTropical}, in the terminology of
 \cite[Definition~8.33]{IzhakianRowen2007SuperTropical}, we can
factor $f$ into semitangibly-full  polynomials, one of whose
ghost loci contains the tangible interval $[a,b]$ and thus
assume that $f$ itself is semitangibly-full. An application of
\cite[Proposition~8.46]{IzhakianRowen2007SuperTropical} now
enables us to factor out quadratic factors from a
semitangibly-full polynomial.
\end{proof}

\subsection{Monomial-eliminating ideals}\label{Eucl}

  Proposition~\ref{tang1}
could be viewed as a version of the  principal ideal theorem, for
ideals defined in terms of a finite set of corner roots, but is
quite restrictive when viewed algebraically. We  prefer a more
intrinsically algebraic version which   can cope with the
counterexamples  given above.
 In this subsection, we further restrict the kinds of
 ideals, using natural algebraic properties naturally arising in tropical geometry, in order to be able to bypass these counterexamples and obtain a principal ideal theorem  parallel to the classical commutative Noetherian theory.

 The  notion    of   principal ideal is delicate even in the standard supertropical theory, since
for example, the ideal of $\Pol(F,F)$ consisting of polynomials for
which $1$ is a root contains both $\la + 1$ and $\la + 2^\nu$, and
thus is not principal in this strict sense. Here is the
layered version for  $\mcR = \Pol(F^{(n)},F)$.

   \begin{lem}\label{twosurp1} $g +\2q  \lmodWL g$ for any $g,q \in \mcR.$ \end{lem}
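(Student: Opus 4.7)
The plan is to reduce this to the pointwise statement already established in Lemma~\ref{twosurp}, since the surpassing relation $\lmodWL$ on $\mcR \subseteq \Fun(S,F)$ is defined pointwise.

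First I would unwind definitions: the relation $g+\2q \lmodWL g$ on functions means $(g+\2q)(\bfa) \lmodWL g(\bfa)$ for every $\bfa \in S$. Since addition and the ``doubling'' operation $\2(\cdot)$ are defined pointwise on $\mcR$, evaluation at $\bfa$ gives
\[
(g+\2q)(\bfa) \;=\; g(\bfa) + \2\,q(\bfa).
\]
Setting $a := g(\bfa) \in F$ and $c := q(\bfa) \in F$, Lemma~\ref{twosurp} applied in the uniform layered $\domain0$ $F$ yields $a + \2c \lmodWL a$, i.e.\ $(g+\2q)(\bfa) \lmodWL g(\bfa)$.

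Since $\bfa \in S$ was arbitrary, this is exactly what the pointwise definition of $\lmodWL$ on $\mcR$ requires, so $g + \2q \lmodWL g$. There is no genuine obstacle here — the lemma is just the pointwise lift of Lemma~\ref{twosurp}; the only thing to verify is that the operations ``$+$'' and ``$\2$'' on $\mcR$ agree with the corresponding operations on $F$ under evaluation, which is immediate from the definition of $\mcR$ as a sub-\semiring0 of $\Fun(S,F)$ with componentwise operations.
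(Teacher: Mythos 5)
Your proof is correct and is exactly the paper's argument; the paper just says ``Apply Lemma~\ref{twosurp} to each $\bfa \in S$,'' and you have filled in the routine unwinding of the pointwise definition of $\lmodWL$ on $\mcR$.
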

\begin{proof} Apply Lemma~\ref{twosurp} to each $\bfa \in S$. \end{proof}

\begin{defn} Suppose $F$ is a layered 1-\semifield0, and  $A \triangleleft \tR$.
The polynomial  $g \in \tR$ is tangibly $L$-\textbf{generated} by
tangibly spanned polynomials $f_1, \dots, f_m$ if  $ \sum_{i=1}^m
h_i f_i \lmodL  g$ for suitable tangibly spanned polynomials $h_1,
\dots,
 h_m$  satisfying the
following conditions:
\begin{enumerate}\eroman
\item  $ \sum_{i\ne j} h_i f_i \not \lmodL h_j f_j$ for each $1
\le j \le
 m$,
\item
 $\deg h_i f_i  \le \deg g$ with respect to the lexicographic order, for each $i$.
\end{enumerate}

An ideal $A$ is \textbf{tangibly $L$-principal}  iff  there is
some tangible $f\in A $ which tangibly $L$-generates every element
of $A$.
\end{defn}

\begin{example} (Here $\mathcal R =
\Pol(F,F).$)
\begin{enumerate}\eroman \item $\mathcal R(\la +1) + \mathcal R(\la +3)$ is a proper
ideal of $\mathcal R$ (containing all multiples of $(\la+a)$ with
$1 \nule a \nule 3$), which is not tangibly $L$-principal. \pSkip

\item  $A = \mcR(\la ^2 + 3\la +4) + \mcR(\la ^2   +4)$ is a proper exchange ideal of $\mathcal R$ which is not  tangibly $L$-principal, seen at once by comparing root
loci.
\end{enumerate}
\end{example}

 Furthermore, prime ideals could require an infinite number of generators;
consider, for example, the prime ideal generated by $\{\la + \a :
\a <_\nu 2 \}.$ (Of course, it is not corner.)

Since it is not enough to consider exchange ideals,
we also introduce an axiom motivated from Proposition~\ref{tropc}.

\begin{defn}\label{tropc1}  An ideal $A$ is called
\textbf{monomial-eliminating} if it has the following property:

Suppose $f,g \in A $ where $A $ is a tropicalized ideal. Then, for
any $h\in \supp(f)\cap \supp(g)$ we can write  $$f+g = \fgin  +
\fgot,$$ where   $ h \in \supp (\fgin ) \subseteq \supp(f) \cap
\supp(g),$ and $\fgot \in A $ with $ \supp(\fgot  ) \subseteq (
\supp(f) \cup \supp(g)) \setminus (\supp (\fgin )\cup \{ h \}) .$
\end{defn}

Monomial-eliminating ideals seem to provide  the proper
formulation for some of the classical results from commutative
ideal theory.

\subsubsection{The $L$-principal ideal theorem}\label{Eucl1}

\begin{thm}\label{Euclid} Over a layered 1-\semifield0 $F$, any   corner monomial-eliminating
 ideal   $A$ of $\mathcal R : = \Pol (F,F)$ is  tangibly $L$-principal with a
unique monic tangibly spanned generator, namely that tangibly
spanned monic polynomial $f \in A$ of minimal degree.
\end{thm}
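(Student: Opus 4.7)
The plan is to adapt the classical Euclidean division algorithm to this layered setting, using the monomial-eliminating axiom of Definition~\ref{tropc1} as a surrogate for subtraction of leading terms, and exploiting the corner property of $A$ both to keep intermediate polynomials inside $A$ and to secure tangible leading monomials when needed.

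First I would establish existence and uniqueness of $f$. For existence, I would show that $A$ contains a tangibly spanned element: starting with any $g \in A$ having a non-tangible monomial $c e_\ell \la^i$ in its decomposition, pair $g$ with a second element of $A$ sharing that monomial (obtained by multiplying $g$ by a suitable tangible monomial, or by combining $g$ with another ideal element), and apply monomial-eliminating at $c e_\ell \la^i$ to strictly reduce the number of ghost monomials; iteration yields a tangibly spanned member of $A$. Normalizing to be monic and selecting the minimal degree $d$ picks out the candidate $f$. For uniqueness, if $\tlf$ is another tangibly spanned monic polynomial of degree $d$ in $A$, applying monomial-eliminating to $f + \tlf$ at the common monomial $\la^d$ produces $q \in A$ of degree $< d$; extracting the tangible skeleton of $q$ by the procedure above would contradict the minimality of $d$ unless that skeleton is trivial, forcing $f$ and $\tlf$ to agree on their tangible supports up to $\nu$-equivalence.

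The division step is the main content. Given $g \in A$ of degree $m$, I induct on $m$. If $m < d$, then minimality of $d$ forces $g$ to have no tangibly spanned structure of positive degree, and Lemma~\ref{twosurp1} confirms that the empty combination $L$-surpasses $g$. If $m \ge d$, let $\beta \la^m$ be the tangible leading monomial of $g$, extracted via the corner property and ghost-elimination if the apparent leading coefficient carries a ghost layer. Form $r := \beta \la^{m-d} f \in A$, which also has leading monomial $\beta \la^m$, and apply monomial-eliminating at $\la^m$ to obtain $g + r = p + q$ with $q \in A$ of degree $< m$. By the inductive hypothesis there is a tangibly spanned $h'$ satisfying $h' f \lmodL q$, and putting $h := \beta \la^{m-d} + h'$, one verifies $h f \lmodL g$ together with the irredundancy and degree bounds required by the definition of tangible $L$-generation.

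The hardest part will be ensuring that each intermediate $q$ yielded by monomial-eliminating is (reducible to) tangibly spanned, and that the surpassing relation $\lmodL$---rather than mere equality---is correctly transmitted through the induction with the right layer data. In particular, when the apparent leading coefficient of $g$ is purely ghost, the corner property of $A$ must be invoked to guarantee a tangible leading monomial, and the bookkeeping of layers through repeated applications of Definition~\ref{tropc1} combined with Lemma~\ref{twosurp1} is where the technical subtlety concentrates.
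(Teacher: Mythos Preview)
Your approach is essentially the paper's: match the leading monomial of $g$ with that of a shift of $f$, apply the monomial-eliminating axiom (Definition~\ref{tropc1}) to drop the degree, induct on the remainder $q$, and use Lemma~\ref{twosurp1} to reassemble the $\lmodL$-factorization. So the overall architecture is correct.

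Where you overcomplicate matters is in the ghost-handling. The paper does not attempt to ``extract a tangible leading monomial via the corner property and ghost-elimination'' or to iteratively strip ghost monomials from an arbitrary element of $A$. Instead it uses one observation once: a corner ideal is $\nu$-closed (this is stated as a lemma right after Definition~\ref{tropid01}), so for \emph{any} $q\in A$ the tangibly spanned polynomial obtained by replacing each coefficient of $q$ by its tangible $\nu$-equivalent is again in $A$. This immediately gives existence of a tangibly spanned element of $A$ (hence of $f$), and after each application of monomial-elimination it lets you replace the remainder $q$ by a tangibly spanned $q'\in A$ of the same degree, to which the inductive hypothesis applies directly. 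With this in hand the paper simply restricts the whole induction to tangibly spanned monic $g$ from the outset; there is no need for your separate base case $m<d$, nor for the delicate ``ghost leading coefficient'' scenario you worry about.

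Your uniqueness sketch is more elaborate than the paper's (which essentially leaves it implicit), but the idea---apply monomial-elimination at $\la^d$ to two candidates and contradict minimality---is sound once you have the $\nu$-closure shortcut above. In short: your proposal is correct in outline, but you should replace the ad hoc ghost-elimination procedures by a single appeal to $\nu$-closure of corner ideals; that is the clean mechanism the paper uses and it dissolves the ``hardest part'' you flag in your final paragraph.
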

 \begin{proof} By induction on degree, we claim that every  tangibly spanned polynomial $g \in A$ is  tangibly
$L$-generated by~$f$. Since $F$ is a 1-\semifield0, we may assume that $\deg f \le \deg g$ and $g$ is monic.
Letting $d := \deg f$ and $t := \deg g - d,$ we see that $\la^t f$
and $g$ have the same leading term $\la^{\deg g}$, so by
monomial-elimination, we can write $\la ^t  f + g  = \fgin +
\fgot,$ as in Definition~\ref{tropc1} (taking $\la^t f $ instead
of $f$), where $\fgot \in A$ and $\la^{\deg g} \notin \supp(\fgot
),$
 implying $\deg \fgot<\deg g.$ By Remark~\ref{Fun1} we can replace
 $\fgot $ by a tangible polynomial $\fgot ' \in A.$ Also, every monomial of $\fgin$ appears in both $\la ^t f$ and $g$.
 By induction $ \qq f \lmodL \fgot $  for some tangibly spanned polynomial~$\qq := \qq(\la)$ which must have
 degree at most $m-1$, 
 implying
\begin{equation} \label{checkit}  (\la^t + \qq)f  \lmodL    \la ^t f+ \fgot =  \fgin + \2\fgot   \lmodL g, \end{equation}
since any monomial of $\la^t f$ not in $g$ appears both in $\la^t
f$ and in~$    q f.$
 We conclude that $
(\la^t +\qq)f$ is the desired  $L$-factorization of $g$
 since $g$ has been presumed tangible.
\end{proof}

Alternatively, we could focus on prime exchange ideals, using the
same kind of proof.

\begin{thm}\label{Euclid1} Over a layered 1-\semifield0 $F$, any  prime     exchange ideal   $A$ of $\Pol (F,F)$
contains a tangibly spanned binomial $f$ such that each $g\in A$
contains a polynomial $L$-generated by $f$.
\end{thm}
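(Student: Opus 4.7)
The plan is to mimic the proof of Theorem~\ref{Euclid}, substituting the exchange law for the role played there by monomial-elimination. First I would extract from $A$ a tangibly spanned binomial. Since $A$ is a prime ideal, Corollary~\ref{binom2} (and, in the layered setting, Theorem~\ref{permprime}) guarantees that $A$ contains a binomial; using that $A$ is \lclo-closed and $F$ is a 1-\semifield0, any ghost coefficient in such a binomial can be replaced by a tangible one of the same $\nu$-value to produce a tangibly spanned binomial of the same degree in $A$. I would then pick $f = \lambda^d + \alpha \lambda^e$, with $0 \le e < d$ and $\alpha$ tangible, of minimal possible degree $d$ among the tangibly spanned binomials of $A$.

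Next I would run a Euclidean-style reduction on each $g \in A$. Letting $\gamma \lambda^m$ be the leading tangibly spanned monomial of $g$ (which exists after possibly $\lmodL$-reducing), form the multiple $\tlf := \gamma \lambda^{m-d} f = \gamma \lambda^m + \gamma \alpha \lambda^{m-d+e} \in A$, which shares its leading monomial $h := \gamma \lambda^m$ with $g$. Writing $g = g' + h$ and $\tlf = h' + h$ with $h' := \gamma \alpha \lambda^{m-d+e}$: when $\supp(h') \cap \supp(g') = \emptyset$, the exchange law forces either $h \in A$ (a degenerate case, below) or $g' + h' \in A$, and in the latter case the new element of $A$ has strictly smaller degree than $g$, allowing induction on degree. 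When $\supp(h')$ already meets $\supp(g')$, I would first combine the shared monomial with $h$ (so that the residual supports become disjoint) before invoking exchange; the m-exchange variant applies directly because $h'$ is itself a tangible monomial.

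Iterating this reduction produces, after finitely many steps, a polynomial $r \in A$ of degree strictly less than $d$. By minimality of $d$, this $r$ cannot be a tangibly spanned binomial of $A$, and by primeness it cannot be a pure tangible monomial except in the trivial case where $\lambda \in A$. Unwinding the iteration yields a tangibly spanned $q \in \tR$ with $\deg(qf) \le \deg g$ and a polynomial $p$ whose monomials come from the successive cancellations carried out on $g$; by construction $qf \lmodL p$, so $p$ is $L$-generated by $f$, while $p$ is supported inside $g$ and is thus the claimed sub-polynomial.

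The main obstacle is controlling the exchange step when $\supp(h')$ overlaps $\supp(g')$, since the cleanest form of the exchange law requires disjoint supports; this forces a preliminary ``absorption'' into $h$ and careful bookkeeping of the irredundancy conditions~(i),(ii) in the definition of tangibly $L$-generated across the iteration, using $\lmodL$-surpassing rather than equality (which is why Lemma~\ref{twosurp1} is in play). A secondary technical point is the degenerate outcome $h \in A$, which by primeness forces either $\gamma \in A$ or $\lambda \in A$ and makes the conclusion trivial for the corresponding $g$.
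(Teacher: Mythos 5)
Your proposal follows essentially the same Euclidean-reduction plan as the paper's proof: fix a tangibly spanned binomial $f\in A$, form $\gamma\la^{m-d}f$ to match the leading monomial of $g$, apply the exchange property, and induct on degree. You add two useful details that the paper leaves implicit: the existence of a tangibly spanned binomial in $A$ (via Corollary~\ref{binom2}, $\nu$-primeness, and $\lclo$-closure to tangibilize coefficients), and the choice of $f$ of minimal degree, which your termination-by-contradiction argument requires. Your handling of the degenerate outcome $h\in A$ via primeness ($\gamma$ is a unit, so $\la\in A$) is a reasonable substitute for the paper's secondary induction on $|\supp g|$.

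However, the step you flag as ``the main obstacle'' is a genuine gap in the proposal. When $g$ has a monomial at degree $m-d+e$ with tangible coefficient $\beta\neq\gamma\alpha$, there is no common $h$ into which the shared monomial can be absorbed: $\la^{m-d}\gamma f$ carries $\gamma\alpha\la^{m-d+e}$ while $g$ carries $\beta\la^{m-d+e}$, and two distinct tangible coefficients can never coincide (for tangibles $\nu$-equivalence is equality) nor be swallowed via $\lmodL$. So the decomposition $\la^{m-d}\gamma f = h+p_1$, $g=h+p_2$ with $\supp(p_1)\cap\supp(p_2)=\emptyset$ required by the exchange law of Definition~\ref{def:tropIdeal} (or its m-exchange variant) simply fails; the m-exchange variant does not rescue you either, since it still demands disjoint residual supports. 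You should be aware that the paper itself does not really resolve this: it invokes Corollary~\ref{exchange12}, which is only proved for corner ideals $\tI_\ell(Z)$ (via Theorem~\ref{exchange}, which dispenses with the disjoint-support hypothesis by using $h_1\nucong h_2$ instead). For an abstract prime exchange ideal, neither that corollary nor the raw exchange law covers the overlapping case, so either the hypothesis must be strengthened (e.g., to prime corner ideals), or the overlap case requires a separate argument — your ``absorption'' does not supply one.
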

 \begin{proof} By induction on degree, we claim that the assertion holds for every   $g \in A$
 of lower degree.  In view of Remark~\ref{Fun1}, we may assume that $g$ is tangibly spanned. Dividing out by its leading coefficient, we may assume that $g$ is monic. Then
letting $d := \deg f$ and $t := \deg g - d,$ we see that $\la^t f$
and $g$ have the same leading term $\la^{\deg g}$, so by the
exchange property, we can write $\la ^t f  =  h + p_1$ and $ g =
h+p_2 ,$ where~$h$ is the sum of those monomials of common
$\nu$-value in $\la ^t f $ and $ g $. By
Corollary~\ref{exchange12}, either $p_1+p_2 \in A$ or $h \in A$.
In the first case we apply induction on the degree. Thus, we may
assume that $h \in A$. Then we are done by induction on $|\supp
g|$ unless $\la ^t f $ is a layered factorization of $ g,$ which
is what we wanted to prove.
\end{proof}


\subsubsection{Finite tangible generation of ideals}\label{Hilb}

The same kind of approach works for finite generation.

\begin{defn} An ideal $A$ of $\mcR$ is \textbf{tangibly $L$-f.g.} if there is a finite (tangibly spanned) subset of $A$ that  tangibly $L$-generates each tangibly spanned polynomial of $A$.
A~\semiring0 is \textbf{tangibly $L$-Noetherian} if each ideal is tangibly $L$-f.g.
\end{defn}

Although this looks like a rather restrictive definition, we must
take into account the following example:

\begin{example} The polynomials $f = (\la _1 +c)\la_2^2 + 100 \la_2 + 105$ for $c$ $\nu$-small all have tangible root sets whose tangible
intersection is $\bfa = (a_1, a_2)$ with    $a_2 = 5,$ and
generate an ideal that is not tangibly $L$-f.g.\end{example}

Although prime corner ideals should play a special role, we
must cope with the following example pointed out to us   by
Sheiner.

\begin{example} The prime corner ideal $\tI_\corn  (( 0,0))$
contains $\la_1^i + \la_2^j$ for all $i,j$, and in particular
requires an infinite number of generators, as seen by taking
$j=1$.
\end{example}

 \begin{thm}\label{fg1} If   $F$ is a layered 1-\semifield0,   then the polynomial \semiring0  $\mcR:= \Pol(F^{(n)},F)$
 is tangibly $L$-Noetherian.
\end{thm}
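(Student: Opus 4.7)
The plan is to parallel the classical Hilbert Basis Theorem: induct on the number $n$ of indeterminates, using the lexicographic monomial order of Remark~\ref{lexic} in place of a field-theoretic leading coefficient, and replacing equality throughout by the surpassing relation $\lmodL$. The base case $n=1$ is essentially Theorem~\ref{Euclid}: any corner monomial-eliminating ideal of $\Pol(F,F)$ is tangibly $L$-principal, hence a fortiori tangibly $L$-f.g.

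For the inductive step, I would fix $\la_n$ as the distinguished indeterminate and, for each tangibly spanned $g \in A \triangleleft \Pol(F^{(n)},F)$, write $g = \la_n^{d(g)} h_g + g^{\flat}$, where $h_g \in \Pol(F^{(n-1)},F)$ is the $\la_n$-leading coefficient and $\deg_{\la_n} g^{\flat} < d(g)$. The set $A'$ of all such leading coefficients forms an ideal of $\Pol(F^{(n-1)},F)$ which, by the inductive hypothesis, is tangibly $L$-generated by finitely many tangibly spanned polynomials $\bar h_1, \dots, \bar h_m$, each lifting to some $g_i \in A$ of $\la_n$-degree $d_i$. Setting $D := \max_i d_i$, for every $d < D$ I would truncate to those elements of $A$ of $\la_n$-degree exactly $d$ whose leading coefficient is not yet covered by a multiple $\la_n^{d-d_i} g_i$; these produce finitely many additional generators by iterated application of the inductive hypothesis. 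Adjoining them to $\{g_1, \dots, g_m\}$ yields a finite candidate set $\mathcal{B} \subset A$.

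To show $\mathcal{B}$ tangibly $L$-generates every tangibly spanned $g \in A$, I would descend by induction on the lex degree of $g$. After multiplying a suitable $g_i$ by a tangibly spanned polynomial to match the leading $\la_n$-monomial of $g$, the monomial-eliminating axiom (Definition~\ref{tropc1}) lets one excise that common top monomial and produce a new element of $A$ of strictly smaller lex order; repeated application of Lemma~\ref{twosurp} and Lemma~\ref{twosurp1} absorbs the inevitable ghost overshoot into $\lmodL$, exactly in the style of the calculation \eqref{checkit} from the proof of Theorem~\ref{Euclid}. Remark~\ref{Fun1} is invoked whenever an intermediate polynomial must be replaced by a $\nu$-equivalent tangibly spanned representative inside $A$.

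The main obstacle I foresee is verifying that the irredundancy conditions~(i) and~(ii) in the definition of tangible $L$-generation survive the inductive degree-reduction step, since successive monomial-eliminations tend to inject inessential or ghost monomials whose supports must be tracked and bounded lexicographically. A secondary subtlety, dramatized by the example $\tI_{\corn}((0,0))$ immediately preceding the theorem, is that honest equality of generators genuinely fails in this setting, so the entire argument has to be phrased in terms of the surpassing relation $\lmodL$ and cannot simply mimic classical division with remainder; reconciling this with the bookkeeping required by the lexicographic induction is where most of the technical work will lie.
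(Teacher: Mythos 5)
Your proposal follows essentially the same route as the paper's proof: an induction on $n$ viewing $\Pol(F^{(n)},F)$ as $R[\la_n]$ over $R := \Pol(F^{(n-1)},F)$, forming the ideal(s) of $\la_n$-leading coefficients in $R$, lifting a finite set of $L$-generators back to elements $f_{i,m} \in A$, and then descending by induction on lexicographic degree, using the monomial-eliminating axiom (Definition~\ref{tropc1}) to excise the top monomial and Lemmas~\ref{twosurp}/\ref{twosurp1} to absorb the resulting ghost overshoot into $\lmodL$, exactly as in \eqref{checkit}. Your step of separately generating the degree-$d$ leading-coefficient ideals for $d$ below the maximal degree $D$ corresponds to the paper's filtration $A_m$ (leading coefficients of polynomials of $\la_n$-degree $\le m$) and its choice of generators $\al_{i,m}$ for each level $m \le m'$, split into Cases I ($d \le m$) and II ($d > m$) in the reduction.

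One small point of calibration: you invoke Theorem~\ref{Euclid} for the base case $n=1$, but that theorem is stated only for \emph{corner} monomial-eliminating ideals, whereas the Noetherian claim must apply to all monomial-eliminating ideals; the paper sidesteps this by letting the induction bottom out at $\Pol(F^{(0)},F)=F$ rather than by citing Theorem~\ref{Euclid}. The technical obstacles you flag — tracking irredundancy through successive eliminations and the need to reason entirely modulo $\lmodL$ rather than equality — are real, and the paper leaves them at a comparable level of informality; so your plan and the paper's argument are on the same footing there.
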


\begin{proof}  We modify the usual proof of the Hilbert Basis Theorem, cf.~\cite{Row2006}. Let
$R :=  \Pol(F^{(n-1)},F).$ We need to show that any
monomial-eliminating ideal $A$ of $\mcR$, viewed as~$R[\la_n]$, is
tangibly $L$-f.g., by induction on $n$. We write $A_{m}$ for the
ideal of $R$ that is $L$-generated by the leading coefficients of
all polynomials of $A$ of degree $\le m $ in $\la_n$.

By induction on $n$, the ideal $\bigcup _{ m \in \Net} A_{m} $ of
$R$  is tangibly $L$-f.g. Taking $m'$ to be the maximal $m$
appearing for these finitely many tangibly spanned $L$-generators,
we explicitly write these
  $L$-generators as  $\al_{i,m}\in
A_m$, $1 \le m \le m',\ 1 \le i \le t_m$.
 We choose  tangibly spanned $f_{i,m} \in A$
of degree $m$, such that $\al_{i,m}$ is the leading coefficient
of~$f_{i,m}.$  (Note that for degree 0, $f_{i,0}=\al_{i,0}$.) Let
$ A' $ be the  ideal of $\mcR$ generated by $\{ f_{i,m}: 1 \le m
\le m', $ $1 \le i \le t_m\}$.

\pSkip \noindent \textbf{Claim:}  \emph{Every tangibly spanned $g
\in A$ lies in $A'$.}

 The claim is proved by induction on the lexicographic degree $\bold d = (d_1, \dots, d_n)$ of
 $g$ (as an $n$-tuple). If $d_n =0$ this is obvious, so assume
 $d_n>0$. We write $d$ for $d_n$.
  We shall lower the lexicographic degree
of~$f$ by means of the leading coefficients. Write $$g = \al\la
^{d} + \text{monomials  of lower  degree in $\la_n$}$$ (with
respect to the lexicographic degree in $R$).  Then $\al \in
A_{d}.$ \pSkip \noindent
\bfem{Case I.} \ $d \le m$. 
We take $  \sum _{i=1}^{t_d} r_{i}\al_{i,d}\lmodL  \al,$
  for suitable $r_{i}$ in~$R$, where $\deg  (r_{i}\al_{i,d})< \deg \al.$ Taking $f_{i,d}\in A$ of degree $d$ in $\la _n$ as above, we see that $  \sum _{i=1}^{t_d}
r_{i,d}f_{i,d}$ has the same leading term as $g$, so  some
$r_{i,d}f_{i,d}$ has the same leading term as $g$;  we can write
$r_{i,d}f_{i,d} +g = \fgin + \fgot,$ by monomial-elimination, and
continue as in the proof of Theorem~\ref{Euclid}.

Namely, $\fgot$ has lower lexicographic order than $g$,  so $\fgot
\in A'$ by induction, yielding $\bt_{j,k,\fgot}$ such that
$$\sum_{j,k} \bt_{j,k,\fgot} f_{j,k}\lmodL \fgot,$$ with each $\deg( \bt_{j,k,\fgot} f_{j,k})< \deg q.$ Then
$$\sum_{i=1}^{d} r_{i,d}f_{i,d} +\sum _{j,k}
\bt_{j,k,\fgot} f_{j,k}\lmodL g $$ since any monomial of $\fgin$
appears in both sums. We conclude by throwing out duplications.
\pSkip \noindent
 \bfem{Case II.}\ $d>m.$ Then
$A_d = A_m,$ so we proceed exactly as in Case I, using $m$ instead
of $d$, except this time taking $ \sum _{i=1}^{t_m}
r_{i,m}f_{i,m}\la^{d-m} $ instead of $  \sum _{i=1}^{t_d}
r_{i,d}f_{i,d}$.
\end{proof}



\end{document}